\documentclass[reqno,11pt]{amsart}


\usepackage[a4paper,left=23mm,right=23mm,top=30mm,bottom=30mm,marginpar=25mm]{geometry}
\usepackage{amsmath}
\usepackage{amssymb}
\usepackage{amsthm}
\usepackage{amscd}
\usepackage{mathtools}
\usepackage{url}
\usepackage[T1]{fontenc}
\usepackage{color}
\usepackage[final]{graphicx}
\usepackage{enumitem}
\usepackage{esint}
\usepackage{dsfont}
\usepackage{subfigure}
\usepackage{tikz}
\usepackage{cite}
\usepackage[colorlinks=true, linkcolor=blue,  anchorcolor=blue,citecolor=blue]{hyperref}
\usepackage[english]{babel}


\numberwithin{equation}{section}

\newtheoremstyle{thmlemcorr}{10pt}{10pt}{\itshape}{}{\bfseries}{.}{10pt}{{\thmname{#1}\thmnumber{ #2}\thmnote{ (#3)}}}
\newtheoremstyle{thmlemcorr*}{10pt}{10pt}{\itshape}{}{\bfseries}{.}\newline{{\thmname{#1}\thmnumber{ #2}\thmnote{ (#3)}}}
\newtheoremstyle{defi}{10pt}{10pt}{\itshape}{}{\bfseries}{.}{10pt}{{\thmname{#1}\thmnumber{ #2}\thmnote{ (#3)}}}
\newtheoremstyle{remexample}{10pt}{10pt}{}{}{\bfseries}{.}{10pt}{{\thmname{#1}\thmnumber{ #2}\thmnote{ (#3)}}}
\newtheoremstyle{ass}{10pt}{10pt}{}{}{\bfseries}{.}{10pt}{{\thmname{#1}\thmnumber{ A#2}\thmnote{ (#3)}}}

\theoremstyle{thmlemcorr}
\newtheorem{theorem}{Theorem}
\numberwithin{theorem}{section}
\newtheorem{lemma}[theorem]{Lemma}

\newtheorem{proposition}[theorem]{Proposition}

\theoremstyle{thmlemcorr*}
\newtheorem{theorem*}{Theorem}
\newtheorem{lemma*}[theorem]{Lemma}
\newtheorem{corollary*}[theorem]{Corollary}
\newtheorem{proposition*}[theorem]{Proposition}
\newtheorem{problem*}[theorem]{Problem}
\newtheorem{conjecture*}[theorem]{Conjecture}

\theoremstyle{defi}
\newtheorem{definition}[theorem]{Definition}

\theoremstyle{remexample}

\newenvironment{remark}
  {\pushQED{\qed}\remarkx}
  {\popQED\endremarkx}

\theoremstyle{ass}


\newcommand{\N}{\mathbb{N}}
\newcommand{\R}{\mathbb{R}}

\newcommand{\Z}{\mathbb{Z}}

\renewcommand{\S}{\mathbb{S}}

\newcommand{\Ecal}{\mathcal{E}}
\newcommand{\Fcal}{\mathcal{F}}

\newcommand{\Hcal}{\mathcal{H}}

\newcommand{\Mcal}{\mathcal{M}}

\renewcommand{\epsilon}{\varepsilon}
\newcommand{\dd}{\,\mathrm{d}}
\DeclareMathOperator*{\essinf}{ess\,inf}
\DeclarePairedDelimiter{\abs}{\lvert}{\rvert}
\DeclarePairedDelimiter{\norm}{\lVert}{\rVert}

\DeclarePairedDelimiter{\inner}{\langle}{\rangle}

\newcommand{\1}{\mathds{1}}
\DeclareMathOperator{\supp}{supp}
\DeclareMathOperator{\Dom}{Dom}

\usepackage{trimclip}

\DeclareRobustCommand{\wtwoscale}{%
  \clippedrightharpoonup\mathrel{\mspace{-15mu}}\rightharpoonup
}
\makeatletter
\newcommand{\clippedrightharpoonup}{%
  \mathrel{\mathpalette\clipped@rightharpoonup\relax}%
}
\newcommand{\clipped@rightharpoonup}[2]{%
  \sbox\z@{$\m@th#1\mspace{6mu}$}%
  \clipbox{0pt 0pt {\dimexpr\width-.5\wd\z@} 0pt}{$\m@th#1\rightharpoonup$}%
  \smash{\clipbox{{\wd\z@} 0pt 0pt {-\width}}{$\m@th#1\rightharpoonup$}}%
}
\makeatother

\DeclareRobustCommand{\twoscale}{%
  \clippedto\mathrel{\mspace{-15mu}}\to
}
\makeatletter
\newcommand{\clippedto}{%
  \mathrel{\mathpalette\clipped@to\relax}%
}
\newcommand{\clipped@to}[2]{%
  \sbox\z@{$\m@th#1\mspace{6mu}$}%
  \clipbox{0pt 0pt {\dimexpr\width-.5\wd\z@} 0pt}{$\m@th#1\to$}%
  \smash{\clipbox{{\wd\z@} 0pt 0pt {-\width}}{$\m@th#1\to$}}%
}
\makeatother

\newcommand{\xtwoscale}{\overset{x}{\twoscale}}
\newcommand{\wxtwoscale}{\overset{x}{\wtwoscale}}

\newcommand{\Om}{\Omega}
\newcommand{\veps}{\varepsilon}

\newcommand{\p}{\partial}
\newcommand{\dl}{\,\mathrm{d}}
\newcommand{\seq}[1]{(#1_k)_{k}}
\newcommand{\sequence}[2]{(#1_#2)_{#2}}

\title{Homogenization of nonlocal exchange energies in micromagnetics}

\AtEndDocument{
    \medskip\hrule	\medskip
	\small{
		\noindent
		{\sc R. Giorgio}, Institute of Analysis and Scientific Computing, TU Wien, Wiedner Hauptstra\ss e 8-10, 1040 Vienna, Austria 
        
        \centerline{\sc and}

            \noindent MedUni Wien, W\"ahringer G\"urtel 18-20, 1090 Vienna, Austria.

        \noindent
	E-mail: \href{mailto:rossella.giorgio@student.tuwien.ac.at}{\tt rossella.giorgio@tuwien.ac.at}}
        
	\medskip \medskip \medskip \hrule  \medskip
   
    \small{
		\noindent
		{\sc L. Happ},
            Departamento de Matem\'aticas, 
            Universidad Aut\'onoma de Madrid, 
            C/ Francisco Tom\'as y Valiente 7,
            28049 Madrid, Spain



            \centerline{\sc and}

            \noindent{Institute of Analysis and Scientific Computing,
		TU Wien,
		Wiedner Hauptstra\ss e 8-10, 1040 Vienna, Austria.}
		
		\noindent
		E-mail:
            \href{mailto:leon.happ@uam.es}{\tt leon.happ@uam.es}
	}

\medskip \medskip \medskip \hrule  \medskip
        
	\small{		
		\noindent
		{\sc H. Sch\"onberger}, Research Institute in Mathematics and Physics, UCLouvain, Chemin du Cyclotron 2, 1348 Louvain-la-Neuve, Belgium

        \noindent
		E-mail: \href{mailto:hidde.schoenberger@tuwien.ac.at}{\tt hidde.schonberger@uclouvain.be}
	}
    \medskip \medskip \medskip \hrule  \medskip
}

\author{Rossella Giorgio}

\author{Leon Happ}

\author{Hidde Sch\"onberger}

\begin{document}
\allowdisplaybreaks

\thispagestyle{empty}

\thispagestyle{empty}
\begin{abstract}  
We study the homogenization of nonlocal micromagnetic functionals incorporating both symmetric and antisymmetric exchange contributions under the physical constraint that the magnetization field takes values in the unit sphere. Assuming that the nonlocal interaction range and the scale of heterogeneities vanish simultaneously, we capture the asymptotic behavior of the nonlocal energies by identifying their
$\Gamma$-limit, leading to an effective local functional expressed through a tangentially constrained nonlocal cell problem. 
Our proof builds upon a tailored notion of two-scale convergence, which takes into account oscillations only in specific directions. It enables us to describe the two-scale limit of suitable nonlocal difference quotients, yielding a nonlocal analog of the classical limit decomposition result for gradient fields. To deal with the manifold constraint of the magnetization, we additionally prove that the microscopic oscillations in the two-scale limit are constrained to lie in the tangent space of the sphere.


\vspace{8pt}

\noindent\textsc{MSC (2020): 49J45, 35B27, 47G20} 
\vspace{8pt}

\noindent\textsc{Keywords:} Nonlocal homogenization, Two-scale convergence, $\Gamma$-convergence, Micromagnetics, antisymmetric
exchange interaction, Dzyaloshinskii--Moriya interaction (DMI).
\vspace{8pt}

\end{abstract}
\maketitle

\section{Introduction}\label{sec:intro}
In this paper we study the combined homogenization and localization of functionals of the form
\[
\Ecal_\epsilon(m):= \Fcal_\epsilon(m)+\Hcal_\epsilon(m),
\]
defined for every $\veps > 0$, with
\begin{align}
\begin{split}\label{eq:energies}
      \Fcal_\epsilon(m)&:=\int_\Omega\int_\Omega \alpha\left(\frac{x}{\epsilon},\frac{y}{\epsilon}\right) \frac{|m(y)-m(x)|^2}{|y-x|^2}\dd x\dd y \\
    \Hcal_\epsilon(m)&:=\int_\Omega\int_\Omega \beta\left(\frac{x}{\epsilon},\frac{y}{\epsilon}\right)\cdot\frac{(m(y)\times m(x))}{|y-x|}\dd x\dd y,
\end{split}
\end{align}
where $\Omega \subset \R^3$ is a bounded Lipschitz domain and $m \in L^2(\Omega;\S^{2})$. Specifically, we consider
\begin{align}
\begin{split}\label{eq:coeff}
\alpha\left(\frac{x}{\epsilon},\frac{y}{\epsilon}\right) &= a\left(\frac{x}{\epsilon},\frac{y}{\epsilon}\right) \frac{1}{\epsilon^3}\rho\left(\frac{y-x}{\epsilon}\right)\\
\beta\left(\frac{x}{\epsilon},\frac{y}{\epsilon}\right) &= \kappa\left(\frac{x}{\epsilon},\frac{y}{\epsilon}\right)  \frac{1}{\epsilon^3}\nu\left(\frac{y-x}{\epsilon}\right)
\end{split}
\end{align}
where $a$ and $\kappa$ are $Q$-periodic in both arguments with $Q:=(0,1)^3$ and describe the inhomogeneous microstructure inside $\Om$, while $\rho$ and $\nu$ are normalized kernels related to the localization effect (see Section~\ref{sec:setting} for more details). Our goal is to understand the effective macroscopic behavior of the energies $\Ecal_\epsilon$ as 
$\veps \to 0$, by analyzing the asymptotic properties of the associated nonlocal energy functionals \eqref{eq:energies}.
For the limit passage in the variational problem, we develop a theory for two-scale convergence of nonlocal difference quotients, which replace the role of gradients in this nonlocal framework.

The homogenization of the nonlocal \textit{symmetric} functionals $(\Fcal_\epsilon)_\epsilon$ has already been extensively studied, see~e.g.,~\cite{AAB23} and the references therein, albeit without the additional constraint that $m \in \S^2$ almost everywhere, and without resorting to the techniques of two-scale convergence; in contrast, the nonlocal \textit{antisymmetric} functionals $\Hcal_\epsilon$ have only recently been considered in the context of localization \cite{DDG24}. Our setting and both additional features are motivated by the theory of micromagnetics which constitutes the backdrop of the analysis in this paper. 

In our main Theorem~\ref{th:Gammalimit}, we prove the $\Gamma$-convergence of the functionals $\Ecal_\epsilon$ to the local functional
\[
\Ecal(m)\coloneqq\int_{\Omega} f_{\rm hom}(m(x),\nabla m(x)) \dd x \quad \text{for $m \in H^1(\Omega;\S^2)$},
\]
with the tangentially homogenized density
\begin{align}
\begin{split}\label{eq:tan_hom_cell}
 f_{\rm hom}(s,A) 
\coloneqq\inf_{v \in L^2_{\#,0}(Q;T_s \S^2)}\int_{\R^3}\int_{Q}&a(z,z+\xi)\rho(\xi)\frac{|A\xi +v(z+\xi)-v(z)|^2}{|\xi|^2}\\
&\quad+\kappa(z,z+\xi)\frac{(A\xi+v(z+\xi)-v(z))}{|\xi|} \cdot(s \times \nu(\xi))\dd z \dd \xi.
\end{split}
\end{align}
Up to the authors knowledge, this constitutes the first nonlocal homogenization result in the literature incorporating a manifold constraint.





 \subsection*{Outline }\label{sec:outline}
    The paper is structured as follows.  
    In the rest of the Section \ref{sec:intro} we provide some background on the available literature about homogenization of convolution-type functionals, as well as on the theory of micromagnetics and related homogenization results. We also point out the novel features of the present homogenization result. Finally, we fix some notation.
    In Section~\ref{sec:setting} we formally define the nonlocal energies and we state the precise assumptions on the heterogeneous coefficients 
    and on the localizing kernels. 
    In Section~\ref{sec:prelim} we recall the notion of two-scale convergence and its related properties. We also introduce a modified notion
    , referred to as \(x\)-two-scale convergence, and show that its central features are inherited from the classical framework. Section~\ref{sec:compactness} focuses on the nonlocal $x$-two-scale compactness theorem and preliminary results.
    Finally, in Section~\ref{sec:gammalim} we prove the main theorem about the $\Gamma$-convergence of nonlocal energies \eqref{eq:energies}. Moreover, in Appendix \ref{app:a} auxiliary computations on the nonlocal energies are presented.


\subsection{State of the art and contributions of the current work}\label{sec:background}
In this paper, we investigate a multiscale homogenization problem for nonlocal energy functionals comprising a convolution-type energy term and an antisymmetric contribution, motivated by the continuum theory of micromagnetics. We are interested in composite ferromagnetic materials occupying a domain $\Om$, with rapidly oscillating heterogeneities encoded in the coefficients $a$ and $\kappa$ in \eqref{eq:coeff}. These microscopic variations are coupled with localization effects occurring within the nonlocal micromagnetic exchange interactions \eqref{eq:energies}, all modulated by a small parameter $\veps > 0.$

\subsection*{Physics context and nonlocal formulation of micromagnetics}

The continuum theory of micromagnetics was first established by Landau and Lifshitz \cite{LaLi35}, and later reformulated and extended by Brown \cite{Br63}. At the mesoscopic scale, and under the assumption that the temperature of the material  is constant and sufficiently far from a critical value known as the Curie point, magnetic phenomena within a ferromagnetic \textit{single-crystal} body occupying a region 
$\Om \subset \R^3$ are described by a vector field 
$M : \Om \to \R^3$ of constant magnitude $M_s$, referred to as the \textit{spontaneous magnetization}, which varies only in direction. Building on further contributions by Dzyaloshinskii \cite{Dz58} and Moriya \cite{Mo60}, the observable magnetic configurations in $\Om$ are modeled as the local minimizers of the micromagnetic energy functional
\begin{equation}
  \mathcal{G} (m) 
  := 
  \int_{\Omega} a_{ex} | \nabla m|^2 + 
  \int_{\Omega} \kappa_{ex} \, \mathrm{curl}\,m \cdot m + \frac{1}{2}  \int_{\R^3} | h_{\mathsf{d}} [m] |^2 + \int_{\Omega} \varphi(m) - \int_{\Om} h_{\mathsf{a}} \cdot m.\label{eq:G}
\end{equation}
In this framework, the temperature is assumed to be constant, and the magnetization is normalized by $M_s$, yielding a magnetization variable $m \in H^1(\Om; \S^2).$ The first two terms in \eqref{eq:G} represent, respectively, the Dirichlet exchange energy, which favors spatial alignment of the magnetization, and the bulk Dzyaloshinskii-Moriya interaction (DMI), which introduces antisymmetric exchange effects that become relevant in ferromagnets with weak chiral symmetry. Here, $a_{ex} > 0$ and $\kappa_{ex} \in \R$ are material-dependent constants; the former penalizes spatial variations in magnetization, and the latter affects the chirality of the system \cite{BoYa89}.

The DMI energy was later incorporated into the micromagnetic energy functional to explain the formation of special magnetic configurations known as magnetic Skyrmions. These are localized chiral spin textures characterized by a non-trivial topological degree (cf.\ \cite{Me14, BoPa20}). Over the past years, they have attracted significant attention, mainly due to their potential applications in spintronics and other emerging magnetic technologies. In particular, one promising candidate for future data-storage devices is based on the exploitation of interactions between electron spins and Skyrmions (cf.\ \cite{FRC17} and the references therein). 

The last three terms in \eqref{eq:G} correspond, respectively, to the following contributions \cite{Br63, HuSch08}:
\begin{itemize}
    \item the magnetostatic self-energy, which quantifies the long-range dipolar interactions
among magnetic moments. The associated demagnetizing
field $h_\mathsf{d} [m]$ is characterized as the unique solution in $L^2 
(\mathbb{R}^3 ; \mathbb{R}^3)$ of the Maxwell--Ampére equations of
magnetostatics
\begin{equation*}
  \label{eq:Maxwell} \left\{\begin{array}{ll}
    \mathrm{div} (\mathds{1}_\Omega m +h_{\mathsf{d}} [m]) = 0 & \textrm{in }
    \mathbb{R}^3,\\
    \mathrm{curl}\, h_{\mathsf{d}} [m] = 0 & \textrm{in } \mathbb{R}^3 ;
  \end{array}\right.
  \end{equation*}
      \item the anisotropy energy, with density $\varphi: \S^2 \to [0, +\infty)$, assumed to be Lipschitz continuous and vanishing along material-preferred directions, known as \textit{easy axes};
      \item the Zeeman energy, where \(h_{\mathsf{a}} \in L^2(\Omega; \mathbb{R}^3)\) represents an externally applied magnetic field, modeling the tendency of the magnetization of the specimen to align with it.
  \end{itemize}
However, in our analysis, we neglect these three contributions, as they act as 
$\Gamma$-continuous perturbations in the context of homogenization, see further justification below. Therefore, we restrict our attention solely to the exchange terms.

Regarding the exchange mechanisms, the classical theory of micromagnetics admits a natural \textit{nonlocal} interpretation, reflecting the inherently nonlocal nature of the discrete setting (cf.\ \cite{He28, Kef62})
\begin{equation} \label{eq:Heinsenberg}
  \mathcal{E}_{\Lambda} 
  :=  - S^2  \sum_{(i, j) \in \Lambda \times \Lambda} J_{ij} m_i \cdot m_j
  + \sum_{(i, j) \in \Lambda \times \Lambda} d_{ij} \cdot (m_i \times m_j),
\end{equation} where $J_{ij} = J_{ji}$ is the symmetric exchange constant between the spins
$m_i, m_j \in \S^2$, which occupy the $i$-th and $j$-th sites of the crystal
lattice $\Lambda$, and $S$ is the spin magnitude. Here, $d_{ij} = - d_{ji}$ is the axial Dzyaloshinskii vector which depends on  the relative position of the spins $m_i$ and
$m_j$ as well as on the symmetry of the crystal lattice. This is motivated by the isotropic Heisenberg model in the case when antisymmetric interactions cannot be neglected.

The increasing effort to reproduce nonlocal models at the continuum scale -- with the aim of obtaining more realistic and accurate descriptions of physical phenomena -- has progressively driven the scientific community toward the use of nonlocal-to-local approximation techniques. One of the first works in this direction (arising from purely theoretical considerations rather than specific applications) was established by Bourgain, Brezis, and Mironescu \cite{BBM01} through a pointwise approximation argument, in which the  Dirichlet energy in \eqref{eq:G} also receives a formal nonlocal justification. An important feature is the use of localizing kernels depending on a small parameter, which in the limit allows the recovery of the original local form of the energy. Subsequently, several extensions and improvements have been developed; for instance, in the \textit{symmetric} setting relevant to our case, significant contributions were made by Ponce \cite{Pon04, Pon04b}, in which the assumption of radiality for the kernels is removed, and a first $\Gamma$-convergence result is obtained. Moreover, motivated by micromagnetic considerations, the nonlocal counterpart of the \textit{antisymmetric} exchange term in \eqref{eq:G} has been recently established in \cite{DDG24}. This has led to the introduction of the following family of functionals as the micromagnetic exchange model
\begin{equation}\label{eq:localization}
 \int_{\Omega}\int_{\Omega} \rho_{\delta} 
     (x - y)  \frac{|m (x) - m (y) |^2}{|x - y|^2}\dd x \dd
     y + \int_{\Omega}\int_{\Omega} \nu_{\delta} 
     (x - y) \cdot \frac{(m (x) \times m (y))}{|x - y|} \dd x
     \dd y, 
\end{equation}
defined on a suitable subspace of $L^2 (\Omega ; \S^2)$, where $\left( \rho_{\delta}
\right)_{\delta}$ and $(\nu_{\delta})_{\delta }$ are families of suitable localizing kernels depending on the scaling parameter $\delta > 0.$ We observe here that the structure of the atomic energy model \eqref{eq:Heinsenberg} is preserved: Using the identity $|m_i - m_j|^2 = 2 - 2m_i \cdot m_j,$ we find the same variational model between the discrete and continuum setting, where the roles of $J_{ij}$ and $d_{ij}$ are substituted by the interaction kernels $\left( \rho_{\delta}
\right)_{\delta}$ and $(\nu_{\delta})_{\delta }$. Moreover, the Dzyaloshinskii vectors are directly connected to the kernel $\nu_\delta$ through the limit identity (cf.\ \cite{DDG24} and \eqref{eq:dmi_only_gen} below)
    \begin{equation*}
    d_i
    := \lim_{\delta \to 0} \int_{\R^3} \nu_{\delta} \left(\xi\right)\frac{\xi_i}{\abs{\xi}}\dd \xi \quad \text{for $i\in \{1,2,3\}$.}
    \end{equation*} 

In the case of a ferromagnetic body composed of \textit{several materials} -- as opposed to a single-crystal body -- the material-dependent parameters $a_{ex}$ and $\kappa_{ex}$ in \eqref{eq:G} (assuming once again that $M_s$ is normalized and also continuous across interfaces) 
are no longer constant within the region $\Om$ occupied by the ferromagnet. As a consequence, also the expression of the nonlocal exchange functional \eqref{eq:localization} must be modified to account for the effects of material inhomogeneities. This leads to the introduction of spatially varying coefficients of the form
\begin{equation*}
    a\left(\frac{x}{\veps},\frac{y}{\veps}\right) \quad \text{and} \quad \kappa \left(\frac{x}{\veps},\frac{y}{\veps}\right) \quad \text{for } x, y \in \R^3,
\end{equation*} which replace the constants $a_{ex}$ and $\kappa_{ex}$ in \eqref{eq:G}, with the additional parameter 
$\veps > 0$ encoding the scale of material heterogeneity. This provides a compelling motivation for the use of homogenization techniques in the setting of \textit{periodic} composite materials. 

\subsection*{Homogenization theory and convolution-type energies}

 The mathematical theory of homogenization in the calculus of variations has by now a long-standing history, see the monographs \cite{BrDe98,CioDo99, BeRy18} for a general overview. A historically significant contribution to the development of the current theory is the work by Marcellini \cite{Ma78}, where a general homogenization result for the class of convex densities was established. Here, the \(\Gamma\)-limit was proved with a functional density obtained via the so-called cell formula, derived from an auxiliary minimization problem on the unit cell. 
This was followed by the independent works of Braides \cite{Br85} and M\"uller \cite{Mu87}, where the homogenization of general non-convex energy densities was rigorously developed.
Since then, the theory of homogenization has seen numerous substantial advancements. In connection with the goals of the present work, notably relevant is the recent progress in the homogenization of \textit{convolution-type functionals}, which, in particular, incorporate the case of nonlocal exchange interactions as substitutes for classical gradient terms. A comprehensive treatment of the $\Gamma$-limit behavior of convolution-type functionals is provided in \cite{AAB23} (see also \cite{BBD24} for the fractional case, \cite{BrP21} for the stochastic setting, and\cite{BrP22} for the scenario of periodically perforated domains). For a general picture of \textit{quadratic} convolution energies, which also motivates our analysis, we look at the nonlocal functional
\begin{equation}\label{eq:conv_hom_proto}	{\mathcal{F}}_{\veps,\delta}(u)\coloneqq 
		\int_\Om\int_\Om a\left(\frac{x}{\veps},\frac{y}{\veps}\right) \frac{1}{\delta^3}\rho\left(\frac{y-x}{\delta}\right)\frac{|u(y)-u(x)|^2}{|y-x|^2} \dl x\dl y,
	\end{equation}
    where $\veps, \delta > 0$ are multiscale parameters representing, respectively, the scale of material inhomo-\\geneities -- captured by the oscillatory coefficient $a$ in both of its entries -- and the interaction range for the kernel $\rho$. Throughout this work, we suppose that the scales \(\veps,\delta>0\) are comparable, i.e, that $\varepsilon$ and $\delta=\delta_\varepsilon$ satisfy \begin{equation}\label{eq:comp_scales}
	\delta_\varepsilon/\varepsilon \to \lambda \in(0,+\infty)
		\qquad \text{ as } \varepsilon \to 0,
	\end{equation}
    which means that the effects of localization and the oscillations caused by material heterogeneities 
    occur at the same scale.
    When $\delta_\epsilon = \epsilon$, it follows from the results in \cite{AAB23} that,
    under suitable assumptions on \(a\) and \(\rho\) (see also Remark~\ref{rmk:assumptions}), the family \(({\mathcal{F}}_{\veps,\delta})_\veps\) \(\Gamma\)-converges in the strong \(L^2\)-topology as \(\veps \to 0\) to the limit
	\begin{equation*}
		{\mathcal{F}}^{\rm conv}(u)
		\coloneqq \int_\Om f_{\hom}^{\rm conv}(\nabla u) \dl x,
	\end{equation*}
	with homogenized density
	\begin{equation}\label{eq:fconv-density}
		f_{\hom}^{\rm conv}(A)
		\coloneqq  \inf_{v \in L^2_{\#,0}(Q;\R^3)}\int_{\R^3}\int_{Q}a(z,z+\xi)\rho(\xi)\frac{|A\xi +v(z+\xi)-v(z)|^2}{|\xi|^2} \dl z\dl \xi.
	\end{equation}
 This generalizes the cell formula obtained in \cite{Ma78} to the nonlocal case by replacing the common gradient expression under the integral by its nonlocal counterpart. One might call the expression in \eqref{eq:fconv-density} a \textit{nonlocal cell formula}. 

	\subsection*{Homogenization in the framework of micromagnetics}

 The complex magnetic microstructure of ferromagnetic materials, in particular the emergence of magnetic domains, can be understood as the result of competing interactions at different length scales, arising from the various contributions in the micromagnetic energy functional. The homogenization of the fully classical micromagnetic energy functional was carried out in \cite{AlDF15}. Therein, the analysis, based on the two-scale convergence theory, focuses on the Dirichlet exchange energy
\begin{equation}\label{eq:only_ex}
		\int_\Om a\left(\frac{x}{\veps}\right)|\nabla m|^2 \dl x,
	\end{equation}
	for a coefficient \(a\in L^\infty(\R^3)\) that is $Q$-periodic and satisfies $\essinf_{\R^3}a>0$. 
    It was shown that the remaining terms, namely the magnetostatic-self energy, the anisotropy energy, and the Zeeman energy, are \(\Gamma\)-continuous perturbations with respect to the strong $L^2$-topology and, for this reason, can be treated separately (cf.\  \cite[Proposition~6.20]{DM93}). In our nonlocal setting, we base our approach on this argument and focus exclusively on the exchange terms.
    
    Additionally, in \cite{AlDF15}, it was shown that the manifold constraint requiring the magnetization $m$ to be $\S^2$-valued almost everywhere in $\Om$ generally has a non-trivial effect on the homogenized limit energy density, as was thoroughly observed in \cite{BaMi10}. These findings imply that, for convex energy densities, the limit density is in general described by a tangentially homogenized cell formula, where the space of admissible test functions must be restricted to those functions only taking values in the tangent space \(T_m\S^2\). This feature reappears in \cite{AlDF15} and also in our result Theorem~\ref{th:Gammalimit} (see also \eqref{eq:tan_hom} below). Interestingly,  despite the \textit{non-convex} constraint on the manifold, the homogenization result reveals that the effective \(\Gamma\)-limit still admits a cell formula representation. 
    
    Later, in \cite{DaD20}, the authors studied the homogenization of an augmented version of the classical micromagnetic energy functional, which, in addition to the symmetric exchange energy in \eqref{eq:only_ex}, also incorporates the antisymmetric Dzyaloshinskii-Moriya interaction (DMI), which in a more general form reads as
    \begin{equation}\label{eq:dmi_only_gen}
		\sum_{i=1}^3\int_\Om \kappa\left(\frac{x}{\veps}\right)  \p_i m(x)\cdot (m(x)\times d_i) \dl x,
	\end{equation} where \(\kappa \in L^\infty(\R^3)\) is  $Q$-periodic and represents the DMI coefficient, and for \( i\in\{1,2,3\}\)
	\(d_i\in \R^3\)  are the Dzyaloshinskii vectors, depending on the underlying crystal structure of the material; note that when $d_i$ equals the canonical basis vector $e_i$, we find again the bulk DMI as in \eqref{eq:G}. However, the results in \cite{DaD20} 
    also apply to this general case and it follows that the functional
	\begin{equation*}
		\int_\Om a\left(\frac{x}{\veps}\right)|\nabla m|^2 \dl x
		+\sum_{i=1}^3\int_\Om \kappa\left(\frac{x}{\veps}\right)  \p_i m(x)\cdot (m(x)\times d_i) \dl x
	\end{equation*}
	\(\Gamma\)-converges with respect to the weak \(H^1\)-topology to 
	\begin{equation*}
		{\mathcal{E}}^{\rm tan}(m)
		\coloneqq \int_\Om f_{\hom}^{\rm tan}(m,\nabla m) \dl x
	\end{equation*} 
	with density
	\begin{equation}\label{eq:tan_hom}
		f^{\rm tan}_{\hom}(s,A)
		\coloneqq \inf_{v \in H^1_{\#,0}(Q;T_s \mathbb{S}^2)}\int_{Q}a(z)|A+\nabla v(z)|^2 
		+\sum_{i=1}^3\kappa(z)(A+\p_i v(z))\cdot (s\times d_i) \dl z.
	\end{equation}


	\subsection*{Contributions of the current paper} Following the homogenization frameworks developed in \cite{AlDF15, DaD20} for the local micromagnetic setting, the novelty of our work lies in extending the analysis to the more general nonlocal micromagnetic model. For the functionals $\mathcal{F}_{\veps,\delta}(m) $ in \eqref{eq:conv_hom_proto} and the new antisymmetric exchange functionals
    \begin{equation*}\label{eq:conv_hom_dmi}
		 {\mathcal{H}}_{\veps, \delta}(m)
		 :=\int_\Omega\int_\Omega \kappa\left(\frac{x}{\veps},\frac{y}{\veps}\right)  \frac{1}{\delta^3}\nu\left(\frac{y-x}{\delta}\right)\cdot\frac{m(y)\times m(x)}{|y-x|}\dl x\dl y,
	\end{equation*}
    with parameters satisfying \(\delta=\veps\), we develop a full homogenization theory over the space \(L^2(\Om;\mathbb{S}^2)\). This leads to the problem introduced in \eqref{eq:energies} above, where the joint localization and homogenization as \(\veps\to 0\) is the subject of our main Theorem~\ref{th:Gammalimit}. In particular, we derive that the corresponding \(\Gamma\)-limit can be represented via the tangentially homogenized \textit{nonlocal cell formula} \eqref{eq:tan_hom_cell}. As a by-product of our analysis, we also obtain an extension of the result to the case of \textit{comparable scales}, as defined in \eqref{eq:comp_scales} (see also Remark~\ref{rmk:diff_scales} for further details). 
    
 In our proof strategy, we follow a different approach from that of \cite{AAB23},  where the convex homogenization formula was deduced as a special case of a more general framework based on an abstract localization scheme for establishing \(\Gamma\)-convergence (see also \cite[Chapters~14--20]{DM93} and \cite[Sections~10--12]{BrDe98}). Instead, we employ the theory of two-scale convergence (dating back to the seminal work \cite{Ng89} and further developed in \cite{All92}),
 which is particularly well-suited for the analysis of convex homogenization problems. Although we are dealing with non-convex manifold constraints, two-scale convergence remains a particularly effective tool for our endeavor. At the same time, its application to the nonlocal setting considered in this paper is not straightforward and requires the development of specific notions and auxiliary results tailored to this context.
 In particular, the nonlocal framework necessitates a \textit{variant} of two-scale convergence, denoted as \(x\)-two-scale convergence, which takes into account oscillations in only certain components of the variables. This idea is not entirely new, see~e.g.~\cite[Section~6]{Ne10}, 
 but we still dedicate Section~\ref{sec:prelim} to carefully introduce the concept and to derive the auxiliary results necessary for our application.

 For the identification of the two-scale limit of suitable nonlocal difference quotients, which replace the classical gradient in our nonlocal framework, we show in Theorem~\ref{th:2scale} a nonlocal analog of the limit decomposition originally proved in \cite[Proposition 1.14(i)]{All92}. The classical case concerns two-scale convergent sequences that are additionally uniformly bounded in \(H^1(\Om;\mathbb{S}^2)\), and exploits the fact that the orthogonal complement of divergence-free functions are exactly gradient fields. In the nonlocal case, we instead draw on the theory of unbounded linear operators to derive a corresponding decomposition, resulting, in a heuristic sense, for a nonlocal counterpart of the classical divergence operator (see also the auxiliary results Lemma~\ref{le:Aoperator} and Lemma~\ref{le:densitykernel}).
 We believe this result is of independent interest and may be valuable for future investigations in the context of nonlocal homogenization. Relying on this derivation, we then suitably adapt the approach in \cite{DaD20} to the nonlocal setting in order to obtain a tangentially homogenized nonlocal cell formula for the \(\Gamma\)-limit density. As an intermediate step, we provide in Proposition~\ref{prop:min_fhom} an equivalent characterization of the unique solution to the limit cell problem.


    \subsection*{Notation}\label{Sec:Notation}

We fix the notation for the rest of the paper. We write \(\cdot\) for the standard Euclidean inner product and \(\colon\) for the Frobenius inner product between matrices. 
Throughout the paper, 
$\Omega \subset \R^3$ is a fixed bounded Lipschitz domain, and  for \(\mu>0\) we define \(\Om_{\mu}\coloneqq \{x \in \Omega : \operatorname{dist}(x, \partial \Om) > \mu\}\). The unit cube in three dimensions is denoted by \(Q\coloneqq (0,1)^3\subset \R^3\).  For a set $E \subset \R^n$, the characteristic function is denoted by
\[
\mathds{1}_{E}(x)=\begin{cases}
    1 &\text{if $x \in E$},\\
    0 &\text{if $x \in \R^n \setminus E$.}
\end{cases}
\]

The subscript \(\#\) will always denote that a function is periodic in the according unit cube; for example, the space \(L^2(\Om;L^2_\#(Q;\R^3))\) is understood as the periodic extension of functions in \(L^2(\Om;L^2(Q;\R^3))\) to \(L^2(\Om;L^2(\R^3;\R^3))\). Similarly, by \(L^\infty_{\#}(Q\times Q)\) we refer to functions which are periodic in both of their entries, or equivalently, in \(Q\times Q\subset \R^6\). Moreover, the subscript \(0\) stands for a vanishing mean value. Accordingly, the space \(L^2(\Om;L^2_{\#,0}(Q;\R^3))\) will contain those functions \(u\in L^2(\Om;L^2_{\#,0}(Q;\R^3))\) for which additionally
\begin{equation*}
    \inner{u}_{Q}(x)
    \coloneqq \frac{1}{Q}\int_Q u(x,z) \dl z = 0
\end{equation*}
for a.e.~\(x\in \Om\). 

 For the two-dimensional unit sphere \(\S^2\subset \R^3\), 
we set \(T_s\S^2\) as the tangent space to \(\S^2\) at a given point \(s\in \S^2\). For a function \(m\in H^1(\Omega;\S^2)\), the space \(L^2(\Om;L^2_{\#,0}(Q;T_m\S^2))\) is understood as the space of functions \(w\in L^2(\Om;L^2_\#(Q;\R^3))\) such that \(w(x)\in T_{m(x)}\S^2\) for a.e.~\(x\in \Omega\). For the sake of notation, we suppress in \(L^2(\Om;L^2_{\#,0}(Q;T_m\S^2))\) the implicit dependence on \(x\in \Omega\). 


To denote weak and strong two-scale convergence (see Definition~\ref{def:ts}) we will use the symbols $\twoscale$ and $\wtwoscale$, respectively. In the case of \(x\)-two-scale convergence (see Definition~\ref{def:xts}), we indicate the variable dependence with an additional superscript and employ the notations $\xtwoscale$ and $\wxtwoscale$.

In the rest of the paper, \(C>0\) will denote a generic constant that may change from line to line.



\section{Setting and Assumptions}\label{sec:setting}
For $m \in L^2(\Om; \R^3)$ and $\veps > 0$, we consider the nonlocal energy 
\begin{equation}\label{eq:full-energy}
    \Ecal_\epsilon(m) = \int_\Omega\int_\Omega \alpha\left(\frac{x}{\epsilon},\frac{y}{\epsilon}\right) \frac{|m(y)-m(x)|^2}{|y-x|^2} + \beta\left(\frac{x}{\epsilon},\frac{y}{\epsilon}\right)\cdot\frac{(m(y)\times m(x))}{|y-x|} \dd x\dd y,
\end{equation}
with coefficients
\begin{equation}\label{coefficients}
    \alpha\left(\frac{x}{\epsilon},\frac{y}{\epsilon}\right) = a\left(\frac{x}{\epsilon},\frac{y}{\epsilon}\right) \frac{1}{\epsilon^3}\rho\left(\frac{y-x}{\epsilon}\right) \quad \text{and } \quad 
\beta\left(\frac{x}{\epsilon},\frac{y}{\epsilon}\right) = \kappa\left(\frac{x}{\epsilon},\frac{y}{\epsilon}\right)  \frac{1}{\epsilon^3}\nu\left(\frac{y-x}{\epsilon}\right).
\end{equation}
The first term of $\Ecal_\epsilon$ in \eqref{eq:full-energy} is denoted by 
$\Fcal_\epsilon$ and referred to as the \textit{symmetric energy}, while the second term, denoted by 
$\Hcal_\epsilon$, is called the \textit{antisymmetric energy}. In most parts of the analysis, $m$ also satisfies the micromagnetic constraint, that is $|m| = 1$ a.e.\ in $\Om$.

Motivated by technical considerations, we impose the following assumptions on the oscillating coefficients and the localizing kernels.

\begin{enumerate}[label=(H\arabic*)]
    \item\label{H1} \textbf{(Oscillating coefficients)}
        The coefficients satisfy $a, \kappa \in L^{\infty}_{\#}(Q\times Q)$ with $a(x,y) \geq a_0 >0$ for a.e.~$x,y \in \R^3$.
    \item\label{H2} \textbf{(Kernel of the symmetric energy)}
        The kernel $\rho \in L^1(\R^3)$ is non-negative, satisfies $\|\rho\|_{L^1(\R^3)}=1$ and
        \begin{equation}\label{eq:rhocoercive}
            \essinf_{\xi \in B_r(0)}\frac{\rho(\xi)}{|\xi|^2}>0 \quad \text{for some $r>0$.} 
        \end{equation} 
    \item\label{H3} \textbf{(Kernel of the antisymmetric energy)}
        The kernel $\nu \in L^1(\R^3;\R^3)$ is chosen such that $\|\nu\|_{L^1(\R^3; \R^3)}=1$.
    \item\label{H4} \textbf{(Interaction of kernels)}
        We assume the control
    \begin{equation}\label{eq:ratiobound}
    \norm*{\frac{\nu}{\rho^{1/2}}}_{L^{2}(\R^3)}<+ \infty.
    \end{equation}
\end{enumerate}
\begin{remark}[On Hypotheses \ref{H2}-\ref{H4}]\label{rmk:assumptions}
We work with kernels of the form
\begin{equation}\label{kernels}
\rho_\veps(\xi) := \frac{1}{\veps^3}\rho \left(\frac{\xi}{\veps} \right) \quad \text{and} \quad \nu_\veps(\xi) := \frac{1}{\veps^3}\nu \left(\frac{\xi}{\veps} \right), 
\end{equation} 
for which the localization condition
\begin{equation*}
\lim_{\veps \to 0} \int_{|\xi| > \delta} \rho_\veps(\xi) \dd \xi = 0 \quad \text{and} \quad \lim_{\veps \to 0} \int_{|\xi| > \delta} |\nu_\veps(\xi) |\dd \xi = 0 \quad \text{for every } \delta > 0
\end{equation*} 
is automatically satisfied.
As a consequence, it is sufficient to pose assumptions directly on the functions $\rho$ and $\nu$, as specified in \ref{H2}-\ref{H4}.

Assumption \ref{H2} for the kernel $\rho$ is quite natural, reflecting the classical integrability condition as in \cite{BBM01}. The coercivity-type estimate in \eqref{eq:rhocoercive} can be viewed as a relaxation of radiality (cf. \cite{BBM01}), and still allows us to exploit compactness arguments (see Theorem \ref{th:2scale}) and a nonlocal Poincaré-type inequality (see \eqref{eq:nonlocal_poincare}).
Moreover, Assumption \ref{H2} shares similarities with the hypotheses introduced in \cite[Section 2.3]{AAB23} for the homogenization of convolution-type energies. However, we note that these conditions might not be optimal. Indeed, in the context of the pointwise localization, optimal assumptions have been identified in \cite{DDP24} for the case $p=2$, in \cite{Fog25} for all $p \in [1, +\infty)$ under the assumption of radiality, and in \cite{GeS25} for the more general setting.

Assumptions \ref{H3}-\ref{H4} for the kernel $\nu$ are inspired by the analysis in \cite{DDG24}. In particular, Assumption \ref{H4} reflects the fact that the kernels $\rho$ and $\nu$ interact with one another. Nevertheless, due to the specific structure of the kernels in \eqref{kernels}, we do not need to impose any additional conditions to ensure the finiteness of quantities of the form
\begin{equation*}
d_i
    =\int_{\R^3} \nu\left(\xi\right)\frac{\xi_i}{\abs{\xi}}\dd \xi, \quad \text{$i\in \{1,2,3\}$},
\end{equation*}
or, in particular, of the averaged DMI vectors, $\bar{d_i}$, introduced later in \eqref{eq:dmivectors} for Proposition \ref{prop:min_fhom}.

\end{remark}

\begin{remark}[On the symmetry of kernels and the well-definedness of the energies]
    In our analysis, the two kernels $\rho$ and $\nu$ are not assumed to satisfy any symmetry conditions.  However, in the context of nonlocal energies of the form  $\Fcal_\veps$ and $\Hcal_\veps$, and in comparison with the pure localization setting (see, e.g., \cite{BBM01, DDG24}), it is common to assume that $\rho$ is  even and that $\nu$ is odd. We note that, without loss of generality, the analysis can always be reduced to this case.
    
   Moreover, in the homogenization framework considered here, where the oscillating coefficients $a$ and $\kappa$ play a role, these symmetry assumptions naturally translate into requiring that the functions $\alpha$ and $\beta$ in \eqref{coefficients} are, respectively, symmetric and antisymmetric, that is,
   \begin{align*}
\alpha\left(\frac{x}{\epsilon},\frac{y}{\epsilon}\right) = \alpha\left(\frac{y}{\epsilon},\frac{x}{\epsilon}\right) \quad \text{and} \quad
\beta\left(\frac{x}{\epsilon},\frac{y}{\epsilon}\right) = - \beta\left(\frac{y}{\epsilon},\frac{x}{\epsilon}\right) \quad \text{for every } x, y \in \R^3.
\end{align*}
Therefore, in the case of $\beta$, for instance, one may choose the physical assumption that $\kappa$ is symmetric and the kernel $\nu$ is odd.
\end{remark}

In the following, it will be useful to rewrite the nonlocal energies as 
\begin{align}
    \label{Feps-shifted}\Fcal_\epsilon(m)&=\int_{\R^3 } \int_{\Omega_{\veps \xi}} a\left(\frac{x}{\epsilon},\frac{x}{\epsilon} + \xi \right) \rho(\xi)  \frac{|m(x + \veps \xi)-m(x)|^2}{|\veps \xi|^2}\dd x\dd \xi, \\
    \label{Heps-shifted}\Hcal_\epsilon(m)&=\int_{\R^3 } \int_{\Omega_{\veps \xi}}\kappa \left(\frac{x}{\epsilon},\frac{x}{\epsilon} + \xi \right) \nu(\xi) \cdot\frac{(m(x + \veps \xi)\times m(x))}{|\veps \xi|}\dd x\dd \xi,
\end{align}
where we performed the change of variable $y = x + \veps \xi$, with $\xi \in \R^3$ and where $ 
\Omega_{\veps \xi}= \{x \in \Omega : x + \veps\xi \in \Omega \}.  $ 
By expressing the functionals in this form, the kernels $\rho$ and $ \nu $ are no longer dependent on the localization parameter $\veps > 0, $ which instead now explicitly interacts with the magnetization $m$. Furthermore, oscillations are observed solely in the variable $x$, and not in $\xi$. This leads to the introduction in Section \ref{sec:prelim} of a suitable notion of two-scale convergence that takes into account oscillations in only specific variables of the functions.



\section{Preliminaries on two-scale convergence}\label{sec:prelim}


We recall here the definition of two-scale convergence (cf. \cite[Definition~1]{LNW02}, and also \cite{Ng89,All92}). Moreover, in view of the fact that the functions \(a\) and  \(\kappa\) in \eqref{Feps-shifted} and \eqref{Heps-shifted} only capture oscillations in the \(x\) and not the \(\xi\) variable, we introduce a generalized notion of two-scale convergence, in which we only account for oscillations appearing in a certain set of components. A similar idea was used for example in \cite{Ne10} in the context of dimension reduction of thin films with lateral periodic inhomogeneities. We show  that the important features of classical two-scale convergence naturally remain valid in this modified setting.

For the rest of this paragraph, let \(m,n\in \N\) and let \(E\) be an open subset of \(\R^n\). We denote by \(\eta\) the variable living on \(E\), by \(Q_\eta\) the \(n\)-dimensional unit cube \((0,1)^n\subset \R^n\), and by \(\zeta\) the variable associated with \(Q_\eta\). Anticipating later parts of the paper and aiming to avoid notational confusion, we point out that in Section~\ref{sec:compactness} and thereafter, the theory developed here will be applied in the case \(n=6\), where the cube \(Q=(0,1)^3\) will play the role of \(Q_x\) (introduced below) and serves as a representative lower-dimensional slice of \(Q_\eta\).

\begin{definition}[Two-scale convergence]\label{def:ts}
     Let \(\sequence{v}{\veps}\subset L^2(E;\R^m)\) and \(v\in L^2(E\times Q,\R^m)\).
    \begin{enumerate}[label = (\roman*)]
        \item 
            The sequence \(\sequence{v}{\veps}\) is said to weakly two-scale converge in $L^2(E; \R^m)$ to the limit function \(v\), written as $v_\epsilon \wtwoscale v$, if
            \begin{equation}\label{eq:w_ts}
                \lim_{\veps\to 0}\int_{E} v_\veps(\eta) \cdot \psi(\eta,\eta/\veps)\dl \eta
                =\int_{E} \int_{ Q_\eta} v(\eta,\zeta)\cdot\psi(\eta,\zeta)\dl \zeta\dl \eta
            \end{equation}
            for all \(\psi\in L^2(E;C_{\#}(Q_\eta;\R^m))\). \smallskip
        \item\label{it:sts}
            If in addition to \eqref{eq:w_ts} there holds
            \begin{equation}\label{eq:s_ts}
                \lim_{\veps\to 0}\norm{v_\veps}_{L^2(E)}
                = \norm{v}_{L^2(E\times Q_\eta)},
            \end{equation}
            we say that \(\sequence{v}{\veps}\) converges strongly two-scale in $L^2(E; \R^m)$ to \(v\) and write $v_\epsilon \twoscale v$.
    \end{enumerate}
\end{definition}

Very importantly, all functions of the form \(v_\veps(\eta)\coloneqq \psi(\eta,\eta/\veps)\) with \(\psi \in L^2(E;C_{\#}(Q_\eta;\R^m))\) actually converge strongly two-scale to \(v(\eta,\zeta)\coloneqq \psi(\eta,\zeta)\) as $\veps \to 0$ (cf. \cite[Example~1 and Theorem~11]{LNW02} or also \cite{Vi06}, where this is deduced with the help of the periodic unfolding). We also note that the two-scale limit is unique (cf. \cite{LNW02} below Definition~1).

Moreover, two-scale convergence enjoys the following pivotal properties: 
\begin{enumerate}[label=(P\arabic*)]
    \item\label{P1} Every bounded sequence in \(L^2(E;\R^m)\) admits a weakly two-scale converging subsequence (cf.\ \cite[Theorem~7]{LNW02}). 
    
    \item\label{P2} If \(\sequence{v}{\veps}\subset L^2(E;\R^m)\) weakly two-scale converges to \(v\in L^2(E\times Q_\eta;\R^m)\), there holds (cf.\ \cite[Theorem~10]{LNW02})
\begin{equation}\label{eq:norm_lsc}
    \liminf_{\veps\to 0} \norm{v_\veps}_{L^2(E)}
    \ge \norm{v}_{L^2(E\times Q_\eta)}.
\end{equation}

\item\label{P3} If \(\sequence{u}{\veps},\sequence{v}{\veps}\subset L^2(E;\R^m)\) converge, respectively, weakly two-scale and strongly two-scale to limit functions \(u,v\in L^2(E\times Q_\eta,\R^m)\), then 
\begin{equation}\label{eq:ts_products}
    \lim_{\veps\to 0}\int_{E} u_\veps(\eta) \cdot v_\veps(\eta)\dl \eta
        =\int_{E} \int_{Q_\eta} u(\eta,\zeta)\cdot v(\eta,\zeta)\dl \zeta\dl \eta.
\end{equation}
\end{enumerate}
This last identity follows immediately by reformulating two-scale convergence as  \(L^2\)-convergence in the space \(L^2(\R^n\times Q_\eta;\R^m)\) via periodic unfolding (cf.\ again \cite{Vi06}; see also \cite{CDG02}), and by applying the corresponding properties on products of weakly and strongly converging sequences in \(L^2(\R^n\times Q_\eta;\R^m)\)  (see also \cite[Proposition~2.5 and Proposition~2.9]{Vi06}).

For the rest of the section, we adopt the following notation. We decompose the variable $\eta \in \R^n$ as $\eta=(x,\xi) \in \R^{n_{x}}\times\R^{n_\xi}$ with \(n=n_x+n_\xi\), and consider a set of the form \(E=E_x \times E_\xi\). Moreover, for \(\zeta \in Q_\eta\) we write \(\zeta=(z,\omega)\in Q_x\times Q_\xi\), where  \(Q_x\) and \(Q_\xi\) denote the unit cubes in $\R^{n_x}$ and $\R^{n_\xi}$, respectively. With this, we now augment Definition~\ref{def:ts} as follows.
\begin{definition}[\(x\)-two-scale convergence]\label{def:xts}
    Let \(\sequence{v}{\veps}\subset L^2(E;\R^m)\) and \(v\in L^2(E_x\times E_\xi\times Q_x;\R^m)\).
    \begin{enumerate}[label = (\roman*)]
        \item 
            The sequence \(\sequence{v}{\veps}\) is said to weakly \(x\)-two-scale converge in $L^2(E;\R^m)$ to the limit function \(v\), written as $v_{\epsilon} \wxtwoscale v$, if
            \begin{equation}\label{eq:w_xts}
                \lim_{\veps\to 0}\int_{E_x}\int_{E_\xi} v_\veps(x,\xi) \cdot \psi(x,\xi,x/\veps)\dl \xi\dl x
                =\int_{E_x}\int_{E_\xi}\int_{Q_x} v(x,\xi,z)\cdot\psi(x,\xi,z)\dl z\dl \xi\dl x
            \end{equation}
            for all \(\psi\in L^2(E;C_{\#}(Q_x;\R^m))\). \smallskip
        \item 
            If in addition to \eqref{eq:w_xts} there holds
            \begin{equation}\label{eq:s_xts}
                \lim_{\veps\to 0}\norm{v_\veps}_{L^2(E)}
                = \norm{v}_{L^2(E_x\times E_\xi\times Q_x)},
            \end{equation}
            we say that \(\sequence{v}{\veps}\) converges strongly \(x\)-two-scale to \(v\) and write $v_\epsilon \xtwoscale v$.
    \end{enumerate}
\end{definition}
As the two-scale limit, also the \(x\)-two-scale limit is unique. 
We define for \(\tilde{v}\in L^2(E\times Q_\eta;\R^m)\) 
\begin{equation*}
    \inner{\tilde{v}}_{Q_\xi}(x,\xi,z)
    \coloneqq \int_{Q_\xi}\tilde{v}(x,\xi,z,\omega)\dl \omega
\end{equation*}
as the average over \(Q_\xi\). Note that this is always well-defined, since Fubini's theorem and the boundedness of \(Q_\xi\) guarantee that \(\tilde{v}(x,\xi,z,\cdot)\in L^1(Q_\xi;\R^m)\) for a.e.~\((x,\xi,z)\in E_x\times E_\xi\times Q_x\). 

The following lemma shows a hierarchy between two-scale convergence and \(x\)-two-scale convergence, which follows readily from Definition~\ref{def:ts} and Definition~\ref{def:xts} (cf.\ also \cite[Proposition~6.2.5]{Ne10}).
\begin{lemma}\label{lem:ts_and_xts}
Let \(\sequence{v}{\veps}\subset L^2(E;\R^m)\).
Then:
\begin{enumerate}[label=(\roman*)]
        \item\label{it:ts_to_xts}
        If $(v_\epsilon)_{\epsilon}$ weakly two-scale converges to some $\tilde{v}\in L^2(E\times Q_\eta;\R^m)$, then $(v_\epsilon)_\epsilon$ weakly $x$-two-scale converges to $\inner{\tilde{v}}_{Q_\xi}$. \smallskip
        
        \noindent Conversely, if $(v_\epsilon)_\epsilon$ weakly $x$-two-scale converges to $v \in L^2(E_x\times E_\xi \times Q_x;\R^m)$, then (up to a subsequence) $(v_\epsilon)_{\epsilon}$ weakly two-scale converges to some $\tilde{v}\in L^2(E\times Q_\eta;\R^m)$ with $\inner{\tilde{v}}_{Q_\xi}=v$. \smallskip
        
        \item\label{it:xts_to_ts}
        The sequence $(v_\epsilon)_\epsilon$ strongly two-scale converges to some $\tilde{v} \in L^2(E\times Q_\eta;\R^m)$ constant in the $\omega$-variable if and only if $(v_\epsilon)_\epsilon$ strongly $x$-two-scale converges to $\tilde{v}$, seen as an element of $L^2(E_x\times E_\xi \times Q_x;\R^m)$.
    \end{enumerate}
\end{lemma}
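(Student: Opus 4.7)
My plan is to exploit a single structural observation: any test function \(\psi(x,\xi,z)\in L^2(E;C_\#(Q_x;\R^m))\) admissible for \(x\)-two-scale convergence lifts to a test function \(\tilde\psi(x,\xi,z,\omega)\coloneqq \psi(x,\xi,z)\in L^2(E;C_\#(Q_\eta;\R^m))\) for full two-scale convergence by trivial extension in \(\omega\). This observation will drive both implications in part \ref{it:ts_to_xts}, while part \ref{it:xts_to_ts} will follow by combining the weak statement with a sharp application of Jensen's inequality along \(Q_\xi\).

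For the forward direction of \ref{it:ts_to_xts}, I would insert \(\tilde\psi\) into \eqref{eq:w_ts}: the evaluation \(\tilde\psi(x,\xi,x/\veps,\xi/\veps)=\psi(x,\xi,x/\veps)\) recovers exactly the test pairing appearing on the left side of \eqref{eq:w_xts}, and Fubini integrates out \(\omega\) on the right-hand side, leaving \(\langle\tilde v\rangle_{Q_\xi}\cdot\psi\) in the integrand. For the converse, I first observe that \(v_\veps\wxtwoscale v\) implies \(v_\veps\rightharpoonup \langle v\rangle_{Q_x}\) weakly in \(L^2(E)\) (by testing against \(\psi\) independent of \(z\)), hence \((v_\veps)_\veps\) is bounded in \(L^2(E)\). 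Property \ref{P1} then produces a (not relabeled) subsequence with \(v_\veps\wtwoscale \tilde v\) for some \(\tilde v\in L^2(E\times Q_\eta;\R^m)\); by the forward direction, the same subsequence also weakly \(x\)-two-scale converges to \(\langle\tilde v\rangle_{Q_\xi}\), and uniqueness of the \(x\)-two-scale limit identifies \(\langle\tilde v\rangle_{Q_\xi}=v\).

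For part \ref{it:xts_to_ts}, the forward direction is immediate: when \(\tilde v\) is independent of \(\omega\), the equality \(\|\tilde v\|_{L^2(E\times Q_\eta)}=\|\tilde v\|_{L^2(E_x\times E_\xi\times Q_x)}\) (using \(|Q_\xi|=1\)), combined with part \ref{it:ts_to_xts}, upgrades strong two-scale convergence to strong \(x\)-two-scale convergence. The backward direction is the main obstacle. Starting from \(v_\veps\xtwoscale \tilde v\), I would extract via \ref{it:ts_to_xts} a subsequence with \(v_\veps\wtwoscale w\) for some \(w\in L^2(E\times Q_\eta;\R^m)\) satisfying \(\langle w\rangle_{Q_\xi}=\tilde v\). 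The strong \(x\)-two-scale norm identity \eqref{eq:s_xts} combined with \ref{P2} yields
\begin{equation*}
    \|w\|_{L^2(E\times Q_\eta)}
    \le \liminf_{\veps\to 0}\|v_\veps\|_{L^2(E)}
    = \|\tilde v\|_{L^2(E_x\times E_\xi\times Q_x)},
\end{equation*}
while Jensen's inequality along \(Q_\xi\) provides the reverse bound
\begin{equation*}
    \|\tilde v\|_{L^2(E_x\times E_\xi\times Q_x)}^2
    =\int_{E_x\times E_\xi\times Q_x}\abs{\langle w\rangle_{Q_\xi}}^2\dl (x,\xi,z)
    \le \|w\|_{L^2(E\times Q_\eta)}^2.
\end{equation*}
Equality in Jensen forces \(w\) to be \(\omega\)-independent a.e., so \(w=\tilde v\) and the subsequence satisfies \eqref{eq:s_ts}, upgrading weak to strong two-scale convergence. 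Since the limit \(\tilde v\) is independent of the chosen subsequence, a standard subsubsequence argument then propagates strong two-scale convergence to the full sequence.
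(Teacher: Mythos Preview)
Your proposal is correct and follows essentially the same approach as the paper: both argue the forward direction of \ref{it:ts_to_xts} by restricting to \(\omega\)-independent test functions, the converse via \(L^2\)-boundedness plus \ref{P1} and uniqueness of the \(x\)-two-scale limit, and part \ref{it:xts_to_ts} via the Jensen-along-\(Q_\xi\) argument with equality forcing \(\omega\)-independence. The only cosmetic difference is that you justify boundedness of \((v_\veps)_\veps\) by first deducing weak \(L^2\)-convergence (testing with \(z\)-independent \(\psi\)) and invoking Banach--Steinhaus, whereas the paper cites the analogous statement for two-scale convergence directly.
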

\begin{proof}
    \textit{Part (i).}
    Let $v_\epsilon \wtwoscale \tilde{v}$ in $L^2(E;\R^m)$ as $\epsilon \to 0$. By choosing in \eqref{eq:w_ts} test functions $\psi\in L^2(E;C_{\#}(Q_x;\R^m))$, we have
    \begin{equation*}
        \lim_{\epsilon \to 0} \int_{E_x}\int_{E_\xi} v_\veps(x,\xi) \cdot \psi(x,\xi,x/\epsilon)\dd \xi\dd x 
        = \int_{E_x}\int_{E_\xi}\int_{Q_x}\left(\int_{Q_\xi} \tilde{v}(x,\xi,z,\omega)\dl \omega\right) \cdot \psi(x,\xi,z)\dd z\dd \xi \dd x.
    \end{equation*}
    This shows the weak $x$-two-scale convergence \eqref{eq:w_xts} with \(\inner{\tilde{v}}_{Q_\xi}\) as the limit function.
    
    Conversely, let $ v_\veps \wxtwoscale v$ as $\veps \to 0$ for some \(v\in L^2(E_x\times E_\xi\times Q_x;\R^m)\). Since any weakly \(x\)-two-scale converging sequence is bounded in $L^2(E;\R^m)$ (cf., e.g., in \cite[Theorem~6]{LNW02}), by Property \ref{P1} we find a (non-relabeled) subsequence and an element \(\tilde{v}\in L^2(E\times Q_\eta;\R^m)\) such that $v_\epsilon \wtwoscale \tilde{v}$ as $\veps \to 0$. It follows now from the first implication and the uniqueness of the \(x\)-two-scale limit that \(\inner{\tilde{v}}_{Q_\xi}=v\) almost everywhere in \(E_\xi\times E_x\times Q_x\). \smallskip

    \textit{Part (ii).}
    Let $(v_\epsilon)_{\epsilon}$ strongly two-scale converge to $\tilde{v}\in L^2(E\times Q_\eta;\R^m)$, where $\tilde{v}$ is constant in the last variable. We denote by $v \in L^2(E_x\times E_\xi \times Q_x;\R^m)$ the function obtained from $\tilde{v}$ by removing the dependence on the last variable. By Part~\ref{it:ts_to_xts} we deduce that $(v_\epsilon)_{\epsilon}$ weakly $x$-two-scale converges to $v$. Moreover, using the strong two-scale convergence, we find
    \[
    \lim_{\epsilon \to 0} \norm{v_\epsilon}_{L^2(E)} = \norm{\tilde{v}}_{L^2(E\times Q_\eta)} = \norm{v}_{L^2(E_x \times E_\xi\times Q_x)},
    \]
    thereby establishing \eqref{eq:s_xts}. 
    
    Conversely, if \(\sequence{v}{\veps}\) strongly \(x\)-two-scale converges to \(v\), then we find by Part~\ref{it:ts_to_xts} that (up to a subsequence) $(v_\epsilon)_\epsilon$ weakly two-scale converges to some $\tilde{v} \in L^2(E\times Q_\eta;\R^m)$ satisfying the identity \(\inner{\tilde{v}}_{Q_\xi}=v\). Utilizing Jensen's inequality and \eqref{eq:norm_lsc}, we obtain that
    \begin{align}
    \begin{split}\label{eq:sts_jensen}
         \norm{v}_{L^2(E_\xi\times E_x\times Q_x)}^2
        &= \int_{E_x}\int_{E_\xi}\int_{Q_x} |v(x,\xi,z)|^2\dl z\dl \xi\dl x\\
        &\le \norm{\tilde{v}}_{L^2(E\times Q)}^2
        \le \liminf_{\veps \to 0} \norm{v_\veps}_{L^2(E)}^2
        = \norm{v}_{L^2(E_\xi\times E_x\times Q_x)}^2.
    \end{split}
    \end{align}
    Since the leftmost and rightmost term in \eqref{eq:sts_jensen} coincide, all the inequalities must be equalities. In particular, equality in Jensen's inequality implies that $\tilde{v}$ is constant in the last variable and, thus, agrees with $v$. This establishes the strong two-scale convergence of $(v_\epsilon)_\epsilon$ to $v$ as in Definition~\ref{def:ts}~\ref{it:sts}. Moreover, as the limit is independent of the chosen subsequence, the convergence also holds for the entire sequence by Urysohn's subsequence principle.
\end{proof}

We now state some properties about $x$-two-scale convergence that will be useful for the rest of the paper.
The following lemma gives a \(x\)-two-scale compactness result, which is a direct consequence of classical two-scale compactness (see \ref{P1}) and Lemma~\ref{lem:ts_and_xts}.
\begin{lemma}\label{lem:xts_comp}
    For each bounded sequence \(\sequence{v}{\veps}\subset L^2(E;\R^m)\) there exists a subsequence and some \(v\in L^2(E_\xi\times E_x\times Q_x,\R^m)\) such that the subsequence weakly \(x\)-two-scale converges to \(v\).
\end{lemma}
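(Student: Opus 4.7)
The plan is to obtain the result as a direct consequence of the classical two-scale compactness property \ref{P1} combined with the hierarchy established in Lemma~\ref{lem:ts_and_xts}\ref{it:ts_to_xts}. Concretely, given a bounded sequence $\sequence{v}{\veps}\subset L^2(E;\R^m)$, I would first apply \ref{P1} to extract a (non-relabeled) subsequence and a classical weak two-scale limit $\tilde{v}\in L^2(E\times Q_\eta;\R^m)$ such that $v_\veps\wtwoscale \tilde{v}$ as $\veps\to 0$.

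Next, I would invoke Lemma~\ref{lem:ts_and_xts}\ref{it:ts_to_xts} to pass from the classical two-scale limit to the $x$-two-scale limit. Setting
\begin{equation*}
    v(x,\xi,z)\coloneqq \inner{\tilde{v}}_{Q_\xi}(x,\xi,z)=\int_{Q_\xi}\tilde{v}(x,\xi,z,\omega)\dl\omega,
\end{equation*}
the cited lemma yields $v_\veps\wxtwoscale v$ along the extracted subsequence, which is exactly the claim.

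The only point to verify is the membership $v\in L^2(E_x\times E_\xi\times Q_x;\R^m)$, which is the type of fact already observed in \eqref{eq:sts_jensen}: by Jensen's inequality applied to the integration over the bounded set $Q_\xi$, one has $\norm{v}_{L^2(E_x\times E_\xi\times Q_x)}\le \norm{\tilde{v}}_{L^2(E\times Q_\eta)}<+\infty$. I do not expect any genuine obstacle in this argument, since the entire content of the lemma is encoded in the two tools mentioned above; the proof is essentially a bookkeeping exercise in applying them in the correct order.
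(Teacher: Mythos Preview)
Your proposal is correct and matches the paper's approach exactly: the paper states that the lemma is a direct consequence of classical two-scale compactness \ref{P1} and Lemma~\ref{lem:ts_and_xts}, which is precisely the chain of reasoning you spell out. The additional verification that $v\in L^2(E_x\times E_\xi\times Q_x;\R^m)$ via Jensen is routine and implicit in the paper's treatment.
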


We now provide a suitable \(x\)-two-scale variant of the lower semicontinuity result stated in \ref{P2}. 
\begin{lemma}\label{lem:x_norm_lsc}
    Let \(\sequence{v}{\veps}\subset L^2(E;\R^m)\) and \(v\in L^2(E_\xi\times E_x\times Q_x;\R^m)\) such that \(\sequence{v}{\veps}\) weakly \(x\)-two-scale converges to \(v\). Then
    \begin{equation*}
        \liminf_{\veps\to 0}\norm{v_\veps}_{L^2(E)}
        \ge \norm{v}_{L^2(E_\xi\times E_x\times Q_x)}.
    \end{equation*}
\end{lemma}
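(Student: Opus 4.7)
The plan is to reduce the assertion to the classical lower semicontinuity property \ref{P2} for ordinary two-scale convergence, by using the bridge provided by Lemma~\ref{lem:ts_and_xts}~\ref{it:ts_to_xts}. The key observation is that any weakly $x$-two-scale convergent sequence is automatically bounded in $L^2(E;\R^m)$: testing \eqref{eq:w_xts} against functions $\psi \in L^2(E;\R^m)$ independent of the fast variable shows that $\sequence{v}{\veps}$ converges weakly in $L^2(E;\R^m)$ to $\int_{Q_x} v(\cdot,\cdot,z)\dd z$, and weak $L^2$-convergence implies boundedness.

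First, I would pass to a (non-relabeled) subsequence realizing the liminf, so that $\lim_{\veps\to 0}\norm{v_\veps}_{L^2(E)} = \liminf_{\veps\to 0}\norm{v_\veps}_{L^2(E)}$. By the boundedness noted above together with the two-scale compactness property \ref{P1}, a further subsequence of this sequence weakly two-scale converges to some $\tilde{v}\in L^2(E\times Q_\eta;\R^m)$. By Lemma~\ref{lem:ts_and_xts}~\ref{it:ts_to_xts} and the uniqueness of the weak $x$-two-scale limit, the average satisfies $\inner{\tilde{v}}_{Q_\xi} = v$ almost everywhere.

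I would then apply the classical two-scale lower semicontinuity \ref{P2} to obtain
\begin{equation*}
    \liminf_{\veps\to 0}\norm{v_\veps}_{L^2(E)}
    \ge \norm{\tilde{v}}_{L^2(E\times Q_\eta)},
\end{equation*}
and combine this with Jensen's inequality applied to the average over $Q_\xi$, exactly as in the chain \eqref{eq:sts_jensen} in the proof of Lemma~\ref{lem:ts_and_xts}~\ref{it:xts_to_ts}, yielding
\begin{equation*}
    \norm{\tilde{v}}_{L^2(E\times Q_\eta)}^2
    \ge \int_{E_x}\int_{E_\xi}\int_{Q_x}\bigl|\inner{\tilde{v}}_{Q_\xi}(x,\xi,z)\bigr|^2 \dd z\dd \xi\dd x
    = \norm{v}_{L^2(E_x\times E_\xi\times Q_x)}^2.
\end{equation*}
Chaining these two inequalities gives the claim.

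There is essentially no technical obstacle in this argument: all the heavy lifting has already been done, namely the classical lower semicontinuity for two-scale convergence (Property \ref{P2}) and the hierarchical relationship between the two notions (Lemma~\ref{lem:ts_and_xts}). The only minor subtlety is that the weakly two-scale convergent subsequence extracted via \ref{P1} need not recover the full original sequence, which is why the argument is phrased in terms of extracting a minimizing subsequence before invoking compactness — a standard maneuver when proving a liminf inequality.
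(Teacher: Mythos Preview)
Your proof is correct and follows essentially the same route as the paper's own argument: pass to a subsequence, invoke Lemma~\ref{lem:ts_and_xts}\,\ref{it:ts_to_xts} to obtain a weakly two-scale convergent subsequence with limit $\tilde{v}$ satisfying $\inner{\tilde{v}}_{Q_\xi}=v$, and then combine the classical lower semicontinuity \ref{P2} with Jensen's inequality as in \eqref{eq:sts_jensen}. The paper's proof is terser (one sentence), but the content is identical; your explicit justification of the $L^2$-boundedness via weak $L^2$-convergence is a harmless elaboration of a fact already noted in the proof of Lemma~\ref{lem:ts_and_xts}\,\ref{it:ts_to_xts}.
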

\begin{proof} Passing to subsequences, the thesis follows directly from Lemma~\ref{lem:ts_and_xts}\,\ref{it:ts_to_xts} and the application of Jensen's inequality, similarly to \eqref{eq:sts_jensen}.
\end{proof} 

We also show that for bounded sequences the space of test functions in \eqref{eq:w_xts} might be replaced by any dense subset of \(L^2(E;C_{\#}(Q_x;\R^m))\), such as, e.g., smooth test functions in \(C_c^{\infty}(E;C_{\#}^{\infty}(Q_x;\R^m))\).
\begin{lemma}\label{lem:x_ts_approx}
    Let \(\sequence{v}{\veps}\subset L^2(E;\R^m)\) be a bounded sequence and \(v\in L^2(E_\xi\times E_x\times Q_x;\R^m)\). Let \(D\subset L^2(E;C_{\#}(Q_x;\R^m))\) be a dense subset of functions in the respective topology. If 
    \begin{equation}\label{eq:x_ts_approx}
    \lim_{\veps \to 0}\int_{E_x}\int_{E_\xi} v_\epsilon(x,\xi) \cdot \psi(x,\xi,x/\epsilon)\dd \xi\dd x 
    = \int_{E_x}\int_{E_\xi}\int_{Q_x}v(x,\xi,z) \cdot \psi(x,\xi,z)\dd z\dd \xi \dd x
    \end{equation}
    for all \(\psi\in D\), then \(\sequence{v}{\veps}\) weakly \(x\)-two-scale converges to \(v\).
\end{lemma}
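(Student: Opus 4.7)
The plan is to use a standard density argument. Given the hypothesis \eqref{eq:x_ts_approx} for test functions in the dense subset $D$, one wants to extend the convergence to arbitrary $\psi \in L^2(E;C_{\#}(Q_x;\R^m))$. The core observation is that for any admissible test function $\phi \in L^2(E;C_{\#}(Q_x;\R^m))$ one has the pointwise estimate
\[
|\phi(x,\xi,x/\veps)| \le \|\phi(x,\xi,\cdot)\|_{C(Q_x)},
\]
which yields
\[
\|\phi(\cdot,\cdot,\cdot/\veps)\|_{L^2(E)} \le \|\phi\|_{L^2(E;C_{\#}(Q_x;\R^m))}
\]
uniformly in $\veps>0$. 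This uniform bound, together with the uniform $L^2(E)$-bound on $(v_\veps)_\veps$, is what drives the extension.

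Concretely, I would first fix an arbitrary $\psi\in L^2(E;C_{\#}(Q_x;\R^m))$. For $\eta>0$, by density of $D$ I can choose $\psi_\eta\in D$ with $\|\psi - \psi_\eta\|_{L^2(E;C_\#(Q_x;\R^m))}<\eta$. Next, I would split the oscillatory duality pairing on the $\veps$-side as
\[
\int_{E_x}\int_{E_\xi} v_\veps(x,\xi)\cdot \psi(x,\xi,x/\veps)\dl\xi\dl x
= \int_{E_x}\int_{E_\xi} v_\veps \cdot \psi_\eta(\cdot,\cdot,\cdot/\veps)\dl\xi\dl x + R_\veps(\eta),
\]
and analogously decompose the limit side by inserting and subtracting $\psi_\eta$. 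Using Cauchy--Schwarz together with the pointwise bound above, the remainder is controlled by
\[
|R_\veps(\eta)| \le \|v_\veps\|_{L^2(E)} \cdot \|\psi - \psi_\eta\|_{L^2(E;C_\#(Q_x;\R^m))} \le C\eta,
\]
uniformly in $\veps$, and the analogous estimate on the limit side gives $\big|\int v\cdot(\psi-\psi_\eta)\big|\le C'\eta$, again by Cauchy--Schwarz.

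With these estimates in hand, I would apply \eqref{eq:x_ts_approx} to the admissible $\psi_\eta\in D$ to conclude
\[
\limsup_{\veps\to 0}\left| \int_{E_x}\int_{E_\xi} v_\veps \cdot \psi(\cdot,\cdot,\cdot/\veps)\dl\xi\dl x - \int_{E_x}\int_{E_\xi}\int_{Q_x} v\cdot \psi\dl z\dl\xi\dl x\right| \le (C+C')\eta.
\]
Letting $\eta\to 0$ yields \eqref{eq:w_xts} for the arbitrary $\psi$, which is precisely the required weak $x$-two-scale convergence. There is no substantial obstacle here; the argument is almost entirely book-keeping. The only subtlety worth flagging is the use of the $C_\#(Q_x)$-norm (rather than an $L^\infty$-norm in $z$) to produce the uniform-in-$\veps$ control on the trace $\psi(\cdot,\cdot,\cdot/\veps)$, which is exactly what makes functions in $L^2(E;C_\#(Q_x;\R^m))$ the natural class of admissible test functions in Definition~\ref{def:xts}.
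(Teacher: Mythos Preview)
Your argument is correct and complete; the uniform estimate $\|\phi(\cdot,\cdot,\cdot/\veps)\|_{L^2(E)} \le \|\phi\|_{L^2(E;C_{\#}(Q_x;\R^m))}$ is exactly the ingredient that makes the $3\eta$-argument go through.

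The paper takes a different route: rather than extending \eqref{eq:x_ts_approx} directly to all test functions, it invokes the $x$-two-scale compactness result (Lemma~\ref{lem:xts_comp}) to extract a subsequence converging weakly $x$-two-scale to some $v'$, then uses the hypothesis on $D$ to deduce $\int (v-v')\cdot\psi = 0$ for all $\psi\in D$, hence $v=v'$ by density, and finally appeals to Urysohn's subsequence principle. Your approach is more self-contained in that it does not rely on compactness at all; the paper's approach is shorter given that compactness is already available, and it sidesteps the explicit uniform bound on oscillating test functions by packaging it inside the compactness lemma. Both are standard and equally valid.
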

\begin{proof}
By Lemma~\ref{lem:xts_comp} we know that, up to a non-relabeled subsequence, $(v_\epsilon)_\epsilon$ weakly $x$-two-scale converges to some $v'\in L^2(E_\xi\times E_x\times Q_x;\R^m)$. Then, by \eqref{eq:x_ts_approx}, we deduce that
\[
\int_{E_x}\int_{E_\xi}\int_{Q_x}v(x,\xi,z) \cdot \psi(x,\xi,z)\dd z\dd \xi \dd x = \int_{E_x}\int_{E_\xi}\int_{Q_x}v'(x,\xi,z)\cdot \psi(x,\xi,z)\dd z\dd \xi \dd x,
\]
for all $\psi \in D$. By density of $D$, we infer that $v=v'$. Hence, up to a subsequence, $(v_\epsilon)_\epsilon$ weakly $x$-two-scale converges to $v$. Since this limit is independent of the chosen subsequence, the convergence holds for the entire sequence.
\end{proof}

Next, we treat oscillating functions of the form \( \psi(x,\xi,x/\veps)\) with \(\psi \in L^2(E_\xi\times Q_x;C(\overline{E_x};\R^m))\). Due to the continuity in the variable $x$, which is satisfied for the homogenizing coefficients later on, 
Lemma~\ref{lem:ts_and_xts} also guarantees the strong \(x\)-two-scale convergence of \( \psi(x,\xi,x/\veps)\) to \( \psi(x,\xi,z)\) as $\veps \to 0$. 
\begin{lemma}\label{le:sxts_nocontinuity}
Let $E_x$ be bounded and $\psi \in L^2(E_\xi\times Q_x;C(\overline{E_x};\R^m))$. Then, the sequence $(\psi_\epsilon)_\epsilon$ defined by $\psi_\epsilon (x,\xi):=\psi(x,\xi,x/\epsilon)$ satisfies
\[
\psi_\epsilon(x,\xi) \xtwoscale \psi(x,\xi,z) \quad \text{in $L^2(E;\R^m)$}.
\]
\end{lemma}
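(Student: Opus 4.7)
The plan is to verify the two defining properties of strong $x$-two-scale convergence---weak $x$-two-scale convergence \eqref{eq:w_xts} and norm convergence \eqref{eq:s_xts}---by reducing both to the following auxiliary claim: for every $G\in L^1(E_\xi\times Q_x; C(\overline{E_x};\R))$,
\[
\lim_{\epsilon\to 0}\int_E G(x,\xi,x/\epsilon)\dd x\dd\xi
=\int_{E_x}\int_{E_\xi}\int_{Q_x} G(x,\xi,z)\dd z\dd\xi\dd x.
\]
For tensor-product integrands $G = \phi(x)g(\xi,z)$ with $\phi\in C(\overline{E_x})$ and $g\in L^1(E_\xi\times Q_x)$, this reduces via Fubini to the classical Riemann--Lebesgue limit for periodic $L^1$-functions tested against continuous functions of bounded support, applied to $H(z):=\int_{E_\xi}g(\xi,z)\dd\xi\in L^1_\#(Q_x)$. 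Finite sums of such tensor products are dense in $L^1(E_\xi\times Q_x;C(\overline{E_x};\R))$ by standard Bochner-space density (using separability of $C(\overline{E_x})$, which comes from compactness of $\overline{E_x}$). The passage from the dense approximants $G_n$ to $G$ is handled by the uniform periodic-tiling bound
\[
\int_E|G-G_n|(x,\xi,x/\epsilon)\dd x\dd\xi\leq C(E_x)\|G-G_n\|_{L^1(E_\xi\times Q_x;C(\overline{E_x};\R))},
\]
valid for $\epsilon$ small, which follows from covering $\epsilon^{-1}E_x$ by $Q_x$-tiles and invoking periodicity in $z$; here the boundedness of $E_x$ is essential.

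Once the auxiliary claim is available, the norm convergence \eqref{eq:s_xts} follows immediately by choosing $G(x,\xi,z):=|\psi(x,\xi,z)|^2$, which lies in $L^1(E_\xi\times Q_x;C(\overline{E_x};\R))$ with norm $\|\psi\|^2_{L^2(E_\xi\times Q_x;C(\overline{E_x};\R^m))}$. As a side benefit, this also yields the uniform boundedness of $(\psi_\epsilon)_\epsilon$ in $L^2(E;\R^m)$, which is needed in order to invoke Lemma~\ref{lem:x_ts_approx}.

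For the weak $x$-two-scale convergence, I apply Lemma~\ref{lem:x_ts_approx} with the dense subspace $D\subset L^2(E;C_\#(Q_x;\R^m))$ consisting of finite sums of tensor products $a(x)\,b(\xi)\,c(z)$ with $a\in C(\overline{E_x})$, $b\in L^2(E_\xi)$ and $c\in C_\#(Q_x;\R^m)$. For any such test function, the integrand $\psi(x,\xi,x/\epsilon)\cdot a(x)\,b(\xi)\,c(x/\epsilon)$ equals $G(x,\xi,x/\epsilon)$ with $G(x,\xi,z):=a(x)\,b(\xi)\,\psi(x,\xi,z)\cdot c(z)$; a Cauchy--Schwarz estimate in $(\xi,z)\in E_\xi\times Q_x$ (using $|Q_x|=1$) confirms $G\in L^1(E_\xi\times Q_x;C(\overline{E_x};\R))$, so the auxiliary claim yields the required identity \eqref{eq:x_ts_approx}.

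The main obstacle is the uniform tile bound underlying the auxiliary claim; everything else is a routine density-and-test-function procedure. The boundedness of $E_x$ is essential, both for the Riemann--Lebesgue ingredient (ensuring $\mathds{1}_{E_x}$ is a genuine $L^\infty$-test function of bounded support) and for closing the tiling estimate with a constant depending only on $|E_x|$.
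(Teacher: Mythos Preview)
Your proof is correct and follows essentially the same route as the paper: verify norm convergence by reducing $|\psi|^2$ to an $L^1$-type periodic averaging limit, and verify weak $x$-two-scale convergence via Lemma~\ref{lem:x_ts_approx} with a dense class of test functions. The only difference is that the paper integrates out the $\xi$-variable first and then cites the classical result \cite[Corollary~5.4]{All92} for functions in $L^1_\#(Q_x;C(\overline{E_x}))$, whereas you reprove that result from scratch via tensor-product density and the tiling bound; your auxiliary claim is, after Fubini in $\xi$, exactly that cited lemma.
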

\begin{proof}
    Define the function
    \[
    \tilde{\psi}(x,z):=\int_{E_\xi}\abs{\psi(x,\xi,z)}^2\dd \xi \quad \text{for all $x \in E_x$ and a.e.~$z \in Q_x$,}
    \]
    then it is straightforward to check that $\tilde{\psi} \in L^1_{\#}(Q_x;C(\overline{E_x};\R^m))$. To be precise, we have
    \begin{align*}
        \norm{\tilde{\psi}}_{L^1_{\#}(Q_x;C(\overline{E_x}))}
        &= \int _{Q_x}\sup_{\overline{E_x}}|\tilde{\psi}(\cdot,z)|\dl z\\
        &\le \int _{Q_x}\int_{E_\xi}\sup_{\overline{E_x}}|\psi(\cdot,\xi,z)|^2 \dl \xi\dl z
        = \norm{\psi}_{L^2(E_\xi\times Q_x;C(\overline{E_x}))}.
    \end{align*}
    Hence, by \cite[Corollary~5.4]{All92} (see also \cite[Theorem~2, Theorem~3]{LNW02}) we find that
    \begin{align*}
        \lim_{\epsilon \to 0}\norm{\psi_\epsilon}_{L^2(E)}^2 &= \lim_{\epsilon \to 0} \int_{E_x}\int_{E_\xi} \abs{\psi(x,\xi,x/\epsilon)}^2\dd \xi \dd x = \lim_{\epsilon \to 0}\int_{E_x} \tilde{\psi}(x,x/\epsilon)\dd x \\
        &= \int_{E_x}\int_{Q_x}\tilde{\psi}(x,z)\dd z\dd x = \int_{E_x}\int_{E_\xi}\int_{Q_x}\abs{\psi(x,\xi,z)}^2\dd z\dd \xi\dd x.
    \end{align*}
    Therefore, it remains to prove the weak convergence $\psi_\epsilon \wxtwoscale \psi$. This can be argued in a similar way, since for every $\varphi \in C_c^{\infty}(E;C_{\#}^{\infty}(Q_x;\R^m))$, the function $\psi_{\varphi}$, defined by
    \[
    \psi_{\varphi}(x,z):=\int_{E_\xi}\psi(x,\xi,z) \cdot \varphi(x,\xi,z)\dd \xi \quad \text{for all $x \in E_x$ and a.e.~$z \in Q_x$,}
    \]
    lies in $ L^1_{\#}(Q_x;C(\overline{E_x};\R^m))$. In the same way, we obtain that
    \begin{align*}
        &\lim_{\epsilon \to 0} \int_{E_x}\int_{E_\xi} \psi(x,\xi,x/\epsilon)\cdot \varphi(x,\xi,x/\epsilon)\dd \xi \dd x = \lim_{\epsilon \to 0}\int_{E_x} \psi_{\varphi}(x,x/\epsilon)\dd x \\
        &\quad= \int_{E_x}\int_{Q_x}\psi_{\varphi}(x,z)\dd z\dd x = \int_{E_x}\int_{E_\xi}\int_{Q_x}\psi(x,\xi,z)\cdot \varphi(x,\xi,z)\dd z\dd \xi\dd x,
    \end{align*}
    from which via Lemma~\ref{lem:x_ts_approx} we conclude that $\psi_\epsilon \wxtwoscale \psi$.
\end{proof}

Finally, we observe that Property \ref{P3} also holds true in the context of \(x\)-two-scale convergence. The following lemma collects this result along with two additional properties that will be useful later.
\begin{lemma}\label{lem:xts_products}
    Let \(\sequence{u}{\veps},\sequence{v}{\veps}\subset L^2(E;\R^m)\) be two sequences such that $u_\veps \wxtwoscale u$ and $v_\veps \xtwoscale v$ as $\veps \to 0$, for some \(u,v\in L^2(E_\xi\times E_x\times Q_x;\R^m)\). Then:
    \begin{enumerate}[label=(\roman*)]
        \item \label{xts_products-i)}
            There holds
            \begin{equation}\label{eq:xts_products}
            \lim_{\veps\to 0}\int_{E_x}\int_{E_\xi} u_\veps(x,\xi) \cdot v_\veps(x,\xi)\dl \xi\dl x
                 =\int_{E_x}\int_{E_\xi}\int_{Q_x} u(x,\xi,z)\cdot v(x,\xi,z)\dl z\dl \xi\dl x.
            \end{equation}
        \item \label{xts_products-ii)}
            If $(v_\epsilon)_\epsilon$ is bounded in $L^{\infty}(E;\R^m)$, then $(u_\epsilon \cdot v_\epsilon)_\epsilon \wxtwoscale u \cdot v$ in \(L^2(E)\).
        \item \label{xts_products-iii)}
            If also \(\sequence{u}{\veps}\) strongly \(x\)-two-scale converges to \(u\), then $(u_\epsilon \cdot v_\epsilon)_\epsilon \xtwoscale u \cdot v$ in \(L^2(E)\).
    \end{enumerate}

\end{lemma}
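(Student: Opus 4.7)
The plan is to reduce each of the three statements to the classical two-scale setting by invoking Lemma~\ref{lem:ts_and_xts}, and then to exploit the properties \ref{P1}--\ref{P3}. Since the strong \(x\)-two-scale limit \(v\) is independent of \(\omega\in Q_\xi\), Lemma~\ref{lem:ts_and_xts}\ref{it:xts_to_ts} lifts \(v_\veps \xtwoscale v\) to the classical strong two-scale convergence \(v_\veps \twoscale \hat v\) in \(L^2(E;\R^m)\), where \(\hat v(x,\xi,z,\omega)\coloneqq v(x,\xi,z)\). On the other hand, \ref{P1} together with Lemma~\ref{lem:ts_and_xts}\ref{it:ts_to_xts} yields, along a non-relabeled subsequence, a function \(\tilde u\in L^2(E\times Q_\eta;\R^m)\) with \(u_\veps\wtwoscale \tilde u\) and \(\inner{\tilde u}_{Q_\xi}=u\).

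For Part~\ref{xts_products-i)}, applying \ref{P3} directly to \(u_\veps\) and \(v_\veps\) and using Fubini's theorem in the \(\omega\)-variable gives
\begin{equation*}
\int_E u_\veps\cdot v_\veps\,\dl \eta
\,\longrightarrow\,
\int_{E\times Q_\eta}\tilde u\cdot \hat v\,\dl \zeta\,\dl \eta
=\int_{E_x\times E_\xi\times Q_x}\inner{\tilde u}_{Q_\xi}\cdot v\,\dl z\,\dl \xi\,\dl x
=\int_{E_x\times E_\xi\times Q_x}u\cdot v\,\dl z\,\dl \xi\,\dl x,
\end{equation*}
and Urysohn's subsequence principle promotes the convergence to the whole sequence.

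For Part~\ref{xts_products-ii)}, the key point is to establish the classical weak two-scale convergence \(u_\veps\cdot v_\veps\wtwoscale \tilde u\cdot \hat v\) in \(L^2(E)\), and then to pass back via Lemma~\ref{lem:ts_and_xts}\ref{it:ts_to_xts} to weak \(x\)-two-scale convergence with limit
\begin{equation*}
\inner{\tilde u\cdot \hat v}_{Q_\xi}=\inner{\tilde u}_{Q_\xi}\cdot v=u\cdot v,
\end{equation*}
the first equality being due to the independence of \(\hat v\) on \(\omega\). I expect the main obstacle to lie in this classical product step. I would carry it out via the periodic unfolding operator \(T_\veps\), which is an isometry onto its image and satisfies \(T_\veps(u_\veps\cdot v_\veps)=T_\veps u_\veps\cdot T_\veps v_\veps\), turning strong (resp.\ weak) two-scale convergence into strong (resp.\ weak) \(L^2\)-convergence on \(E\times Q_\eta\). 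The product then reduces to the well-known fact that a weakly \(L^2\)-convergent sequence times a strongly \(L^2\)-convergent one converges weakly in \(L^2\) whenever the strong factor is uniformly bounded in \(L^\infty\), which is precisely the role played by the assumption \(v_\veps\in L^\infty\). Once this is in place, the argument above concludes Part~\ref{xts_products-ii)}.

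For Part~\ref{xts_products-iii)}, in view of Part~\ref{xts_products-ii)} it only remains to upgrade the convergence by proving the norm identity \(\norm{u_\veps\cdot v_\veps}_{L^2(E)}\to \norm{u\cdot v}_{L^2(E_x\times E_\xi\times Q_x)}\). Setting \(w_\veps\coloneqq u_\veps\cdot v_\veps\), Part~\ref{xts_products-ii)} applied componentwise to the scalar sequence \(w_\veps\) and the vector-valued, strongly \(x\)-two-scale convergent, \(L^\infty\)-bounded sequence \(v_\veps\) yields \(w_\veps v_\veps\wxtwoscale (u\cdot v)v\) in \(L^2(E;\R^m)\). Applying now Part~\ref{xts_products-i)} to the pair \((w_\veps v_\veps,\,u_\veps)\), weakly and strongly \(x\)-two-scale convergent, respectively, we conclude
\begin{equation*}
\norm{w_\veps}_{L^2(E)}^2
=\int_E (w_\veps v_\veps)\cdot u_\veps\,\dl \eta
\,\longrightarrow\,
\int_{E_x\times E_\xi\times Q_x}(u\cdot v)(v\cdot u)\,\dl z\,\dl \xi\,\dl x
=\norm{u\cdot v}_{L^2(E_x\times E_\xi\times Q_x)}^2,
\end{equation*}
which completes the bootstrap.
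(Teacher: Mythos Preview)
Your Parts~\ref{xts_products-i)} and~\ref{xts_products-ii)} coincide with the paper's proof: lift \(v_\veps\xtwoscale v\) to classical strong two-scale convergence via Lemma~\ref{lem:ts_and_xts}\ref{it:xts_to_ts}, extract (along a subsequence) \(u_\veps\wtwoscale\tilde u\) with \(\inner{\tilde u}_{Q_\xi}=u\), apply \ref{P3} (resp.\ the unfolding-based product rule), and average out the redundant \(\omega\)-variable.

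For Part~\ref{xts_products-iii)} you take a different route. The paper argues directly: since both sequences now strongly two-scale converge (Lemma~\ref{lem:ts_and_xts}\ref{it:xts_to_ts}), periodic unfolding reduces the claim to strong \(L^2\)-convergence of the scalar product in \(L^2(E\times Q_\eta)\), and the \(\omega\)-independence of the limit transfers back to strong \(x\)-two-scale convergence via Lemma~\ref{lem:ts_and_xts}\ref{it:xts_to_ts}. Your bootstrap instead recovers the norm identity from Parts~\ref{xts_products-i)}--\ref{xts_products-ii)} through the algebraic observation \((w_\veps v_\veps)\cdot u_\veps=|u_\veps\cdot v_\veps|^2\), which is an elegant, self-contained alternative that avoids re-entering the unfolding machinery. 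One caveat: your argument relies throughout on the \(L^\infty\)-bound on \((v_\veps)_\veps\), which is stated only for Part~\ref{xts_products-ii)}. Without that bound the conclusion of Part~\ref{xts_products-iii)} is in fact false in general, and the paper's unfolding step is equally affected; the bound should therefore be read as carried over from Part~\ref{xts_products-ii)}, consistently with all later applications of the lemma in the paper.
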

\begin{proof}
\textit{Part (i).}
By Lemma~\ref{lem:ts_and_xts}\,\ref{it:ts_to_xts}, we find that, up to a non-relabeled subsequence, there exists some $\tilde{u} \in L^2(E\times Q_\eta;\R^m)$ with $\inner{\tilde{u}}_{Q_\xi}=u$ such that 
$u_\veps \wtwoscale \tilde{u}$ in $L^2(E; \R^m)$ as $ \veps \to 0$.
Moreover, we have that $v_\veps \twoscale v$ in $L^2(E; \R^m)$ as $\veps\to 0.$ 
Thus, using \ref{P3}, we infer that
\begin{align*}
    \lim_{\epsilon \to 0} \int_{E_x}\int_{E_\xi} u_\veps(x,\xi) \cdot v_\veps(x,\xi)\dl \xi\dl x
         &=\int_{E_x}\int_{E_\xi}\int_{Q_x} \left( \int_{Q_\xi}\tilde{u}(x,\xi,z,\omega) \dl \omega \right)\cdot v(x,\xi,z)\dl z\dl \xi\dl x\\
         &=\int_{E_x}\int_{E_\xi}\int_{Q_x} u(x,\xi,z)\cdot v(x,\xi,z)\dl z\dl \xi\dl x.  
\end{align*}
\smallskip

\textit{Part (ii).} 
Since $(v_\epsilon)_\epsilon$ is bounded in $L^{\infty}(E;\R^m)$ and $u_\veps \wxtwoscale u$ in $L^2(E; \R^m)$, we find that the sequence $(u_\epsilon \cdot v_\epsilon)_\epsilon$ is bounded in $L^2(E)$. Using that $u_\epsilon \wtwoscale \tilde{u}$ (see Part~\ref{xts_products-i)}) and $v_\epsilon \twoscale v$ in $L^2(E; \R^m)$, via periodic unfolding (cf.\ \cite{Vi06}) and by the fact that an analog version of statement \ref{xts_products-ii)} holds true in the standard Lebesgue spaces over \(\R^n\times Q_\eta\), we deduce
that $(u_\epsilon \cdot v_\epsilon)_\epsilon$ weakly two-scale converges to $\tilde{u} \cdot v$ in $L^2(E)$. Hence, by Lemma~\ref{lem:ts_and_xts}\,\ref{it:ts_to_xts}, it follows that $(u_\epsilon \cdot v_\epsilon)_\epsilon$ weakly $x$-two-scale converges to $\inner{\tilde{u}\cdot v}_{Q_\xi}=u \cdot v$.\smallskip


\textit{Part (iii).} By Lemma~\ref{lem:ts_and_xts}\,\ref{it:xts_to_ts} we additionally know that $u_\epsilon \twoscale u$ in $L^2(E; \R^m)$. Applying periodic unfolding as in Part~\ref{xts_products-ii)}, this again reduces the statement to the setting of standard Lebesgue spaces over \(\R^n\times Q_\eta\), where the result is already known to hold. 
We then find that $(u_\epsilon \cdot v_\epsilon)_\epsilon$ strongly two-scale converges to $u \cdot v$ in $L^2(E)$. Since the limit does not depend on the $\omega$-variable, we deduce from Lemma~\ref{lem:ts_and_xts}\,\ref{it:xts_to_ts} that $(u_\epsilon \cdot v_\epsilon)_\epsilon$ also strongly $x$-two-scale converges to $u \cdot v$.

\end{proof}



\section{Compactness and nonlocal two-scale limits}\label{sec:compactness}

In this section we present the compactness results related to the nonlocal functionals. While the strong $L^2$-compactness result follows readily from the analysis in \cite{DDG24}, the identification of the two-scale limits of the nonlocal difference quotients is completely new. 

We first prove the following auxiliary lemma, which utilizes some results on unbounded linear operators, see e.g.~\cite[Chapter~2]{Bre11} for an introduction to this topic.
\begin{lemma}\label{le:Aoperator}
    Let $S_\rho:\Dom(S_\rho) \subset L^2(\Omega ;L^2_{\#}(Q;\R^3)) \to L^2(\Omega\times\R^3;L^2_{\#}(Q;\R^3))$ be the operator given by
    \[
    S_\rho(w)(x,\xi,z):= \rho(\xi)^{1/2}\frac{(w(x,z+\xi)-w(x,z))}{|\xi|} \quad \text{for a.e.~$(x,\xi,z) \in \Omega\times \R^3\times Q$}.
    \]
    Then, $S_\rho$ is a densely defined unbounded linear operator with closed range, and its adjoint is given by $S_\rho^*:\Dom(S^*_\rho) \subset L^2(\Omega\times\R^3;L^2_{\#}(Q;\R^3)) \to L^2(\Omega ;L^2_{\#}(Q;\R^3))$ with
    \begin{equation}\label{eq:Aadjoint}
    S^*_\rho(u)(x,z)=\int_{\R^3}\rho(\xi)^{1/2}\frac{(u(x,\xi,z-\xi)-u(x,\xi,z))}{|\xi|}\dd \xi  \quad \text{for a.e.~$(x,z) \in \Omega \times Q$}.
    \end{equation}
\end{lemma}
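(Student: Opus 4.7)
Proof plan. The plan is to verify, in sequence, that $S_\rho$ is linear with dense domain, to derive the adjoint formula by a periodicity-based change of variables, and finally to deduce closedness of the range from a nonlocal Poincaré-type inequality built on assumption \ref{H2}.

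Linearity of $S_\rho$ is immediate from the definition. For density of $\Dom(S_\rho)$, I would show that every smooth compactly supported $w\in C_c^\infty(\Omega; C^\infty_\#(Q;\R^3))$ lies in $\Dom(S_\rho)$: indeed, $|w(x,z+\xi) - w(x,z)|^2/|\xi|^2 \leq C\min(1, |\xi|^{-2})$ uniformly in $(x,z)$, so Fubini combined with $\rho\in L^1(\R^3)$ gives $S_\rho w \in L^2(\Omega\times \R^3;L^2_\#(Q;\R^3))$. Since such smooth functions are dense in $L^2(\Omega; L^2_\#(Q;\R^3))$, the domain is dense.

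For the adjoint, I would pick $w\in \Dom(S_\rho)$ and a $u$ smooth and compactly supported in $\xi$ (a dense class in $L^2(\Omega\times\R^3;L^2_\#(Q;\R^3))$) and compute
\[
\langle S_\rho w, u\rangle = \int_\Omega\int_{\R^3}\int_Q \rho(\xi)^{1/2}\frac{w(x,z+\xi)-w(x,z)}{|\xi|}\cdot u(x,\xi,z)\,dz\,d\xi\,dx.
\]
The $Q$-periodicity of $w$ and of $u(x,\xi,\cdot)$ allows the substitution $z\mapsto z-\xi$ in the term coming from $w(x,z+\xi)$, which by Fubini yields
\[
\langle S_\rho w, u\rangle = \int_\Omega\int_Q w(x,z)\cdot \int_{\R^3}\rho(\xi)^{1/2}\frac{u(x,\xi,z-\xi) - u(x,\xi,z)}{|\xi|}\,d\xi\,dz\,dx.
\]
Identifying the inner integral with $S_\rho^* u(x,z)$ gives formula \eqref{eq:Aadjoint}, and $\Dom(S_\rho^*)$ consists precisely of those $u$ for which this inner integral defines an element of $L^2(\Omega; L^2_\#(Q;\R^3))$.

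For the closed range property, I would first observe that $\ker(S_\rho) = L^2(\Omega;\R^3)$, the subspace of functions constant in $z$: this follows from \ref{H2} ensuring $\rho > 0$ on a set of positive measure near the origin, combined with the continuity of translations on $L^2_\#(Q)$. On the orthogonal complement $L^2(\Omega; L^2_{\#,0}(Q;\R^3))$, the plan is to prove a nonlocal Poincaré-type inequality
\[
\|w\|_{L^2(\Omega\times Q)}^2 \leq C \|S_\rho w\|_{L^2(\Omega\times \R^3\times Q)}^2,
\]
by using the Fourier expansion $w(x,z) = \sum_{k \in \Z^3\setminus\{0\}}\hat{w}_k(x)e^{2\pi i k\cdot z}$, the coercivity bound $\rho(\xi)/|\xi|^2 \geq c_0 > 0$ on $B_r(0)$ from \eqref{eq:rhocoercive}, and the uniform lower bound of $\int_{B_r(0)}|e^{2\pi i k\cdot \xi} - 1|^2\,d\xi$ over $k\in \Z^3\setminus\{0\}$ (which follows from continuity together with the Riemann--Lebesgue asymptotic of $2|B_r|$ as $|k|\to \infty$). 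Closedness of $S_\rho$, which can be verified by testing graph convergence against smooth elements of $\Dom(S_\rho^*)$, then combines with this inequality to force the range to be closed. I expect the main technical obstacle to be the rigorous justification of closedness of $S_\rho$ and of the uniform-in-$k$ Fourier lower bound rather than the formal manipulations above, since $\rho$ is only assumed integrable and lower-bounded on a ball, with no radiality or pointwise regularity.
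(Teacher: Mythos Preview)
Your proposal is correct and follows the same overall strategy as the paper: density via smooth periodic functions, the adjoint formula via the periodic shift $z\mapsto z-\xi$ and Fubini, and closed range via a nonlocal Poincar\'e inequality on mean-zero functions. The differences are in execution rather than substance. For the Poincar\'e inequality, the paper simply invokes inequality~\eqref{eq:nonlocal_poincare} in Appendix~\ref{app:a}, which in turn quotes an external result, whereas you give a self-contained Fourier argument exploiting the lower bound $\rho(\xi)/|\xi|^2\ge c_0$ on $B_r(0)$ and the uniform positivity of $\int_{B_r}|e^{2\pi i k\cdot\xi}-1|^2\,d\xi$ over nonzero lattice vectors; your route is more explicit and avoids external dependencies. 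For closed range, the paper argues directly with a sequence $S_\rho(w_j)\to F$, normalizes to $\langle w_j\rangle_Q=0$, extracts a weak limit $w$ via the Poincar\'e bound, and identifies $F=S_\rho(w)$ by testing against all $u\in\Dom(S_\rho^*)$; you instead package the same ingredients as ``$S_\rho$ closed plus Poincar\'e on $(\ker S_\rho)^\perp$ implies closed range''. Both are standard and equivalent; the paper's version has the slight advantage of not needing to isolate closedness of $S_\rho$ as a separate step, since the weak-limit-plus-adjoint argument handles it implicitly.
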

\begin{proof}
    For shorter notation, we denote the Hilbert spaces $H_1:=L^2(\Omega ;L^2_{\#}(Q;\R^3))$ and $H_2:=L^2(\Omega\times\R^3;L^2_{\#}(Q;\R^3))$. Therefore, $S_\rho : \Dom(S_\rho) \subset H_1 \to H_2.$ It can be checked that there holds $C_c^{\infty}(\Omega;C^\infty_{\#}(Q;\R^3)) \subset \Dom(S_\rho)$, which shows that $S_\rho$ is densely defined. Moreover, if $u \in \Dom(S^*_\rho)$, i.e., $\Dom(S_\rho) \ni w \mapsto \inner{S_\rho(w),u}_{H_2}$ is a bounded linear functional, then for all $w \in \Dom(S_\rho)$ we compute that
    \begin{align*}
        \inner{S_\rho(w),u}_{H_2} &= \int_{\Omega}\int_{\R^3}\int_{Q} \rho(\xi)^{1/2}\frac{(w(x,z+\xi)-w(x,z))}{|\xi|}\cdot u(x,\xi,z)\dd z\dd \xi \dd x\\
        &= \int_{\Omega}\int_{\R^3}\int_{Q} w(x,z) \cdot \left(\rho(\xi)^{1/2}\frac{(u(x,\xi,z-\xi)-u(x,\xi,z))}{|\xi|}\right)\dd z\dd \xi \dd x \\
        &=\int_{\Omega}\int_{Q} w(x,z) \cdot\left(\int_{\R^3}\rho(\xi)^{1/2}\frac{(u(x,\xi,z-\xi)-u(x,\xi,z))}{|\xi|}\dd \xi \right) \dd z\dd x,
    \end{align*}
    using the change of variables $z + \xi \mapsto z$ with fixed $\xi \in \R^3$ and Fubini's theorem. This shows that $S_\rho^*(u)$ is indeed given by \eqref{eq:Aadjoint}.
    
    Finally, to prove that the range of $S_\rho$ is closed, we consider a sequence $(w_j)_j \subset \Dom(S_\rho)$ such that $S_\rho(w_j) \to F$ in $H_2$ as $j \to \infty$. Moreover, we may assume that $\int_{Q}w_j(x,z)\dd z=0$ for a.e.~$x \in \Omega$, since this does not affect the value of $S_\rho$. 
    Due to inequality \eqref{eq:nonlocal_poincare} in Appendix~\ref{app:a}, we find that there is a $C>0$ such that
    \[
    \norm{w_j}^2_{H_1} \leq C\norm*{S_\rho(w_j)}^2_{H_2} \quad \text{for all $j \in \N$}.
    \]
    Therefore, up to a non-relabeled subsequence, we find that $w_j \rightharpoonup w$ in $H_1$. For all $u \in \Dom(S_\rho^*)$, we now find that
    \[
    \inner{F,u}_{H_2}=\lim_{j \to \infty} \inner{S_\rho(w_j),u}_{H_2}=\lim_{j \to \infty} \inner{w_j, S_\rho^*(u)}_{H_1} = \inner{w,S_\rho^*(u)}_{H_1}.
    \]
    This shows that $F=S_\rho(w)$, which proves that the range of $S_\rho$ is closed.
\end{proof}
We also need a density result for functions in the kernel of $S_\rho^*$.
\begin{lemma}\label{le:densitykernel}
    Let $\psi \in L^2(\Omega\times\R^3;L^2_{\#}(Q;\R^3))$ be such that $S_\rho^*(\psi)=0$. Then, there exists a sequence $\seq{\psi} \subset L^2(\Omega\times\R^3;L^2_{\#}(Q;\R^3))$ converging to $\psi$ in $L^2(\Omega\times\R^3;L^2_{\#}(Q;\R^3))$ such that $S_\rho^*(\psi_k)=0$ and
    \begin{equation}\label{eq:smoothpsik}
        \supp \psi_k(\cdot,\xi,z) \subset K_k \Subset \Omega \ \ \text{for a.e.~$(\xi,z) \in \R^3 \times Q$} \quad \text{and} \quad \int_{\R^3}\norm{\psi_k(\cdot,\xi,\cdot)}_{C^2(\Omega \times Q)}^2\dd \xi <+ \infty,
    \end{equation}
    for all $k \in \N$.
\end{lemma}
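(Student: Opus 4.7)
The strategy is to build $\psi_k$ by combining a smooth cutoff in the $x$-variable with a two-variable mollification in $(x,z)$, exploiting the crucial structural fact that $S_\rho^*$, as written in \eqref{eq:Aadjoint}, acts on the $z$-variable only through translations (by $\xi$) and integration over $\xi$, but does \emph{not} act on $x$ at all. Consequently, $S_\rho^*$ should commute with multiplication by any $x$-dependent cutoff and with convolution by smooth kernels in $x$ and by periodic smooth kernels in $z$, which will preserve the condition $S_\rho^*(\psi_k)=0$.

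Concretely, for each $k\in\N$, I would first choose a smooth cutoff $\chi_k\in C_c^\infty(\Omega;[0,1])$ with $\operatorname{supp}\chi_k\Subset K_k\coloneqq\{x\in\Omega:\operatorname{dist}(x,\partial\Omega)\ge 1/k\}$ and $\chi_k\to 1$ pointwise a.e., and set $\widetilde\psi_k(x,\xi,z)\coloneqq\chi_k(x)\psi(x,\xi,z)$. Since $\chi_k$ does not depend on $(\xi,z)$, it factors out of the defining integral of $S_\rho^*$, giving $S_\rho^*(\widetilde\psi_k)=\chi_k S_\rho^*(\psi)=0$ a.e. Next, I would convolve in $x$ by a standard mollifier $\eta_{\delta_k}$ with $\delta_k>0$ chosen small enough that $\operatorname{supp}\chi_k+B_{\delta_k}(0)\Subset\Omega$, and in $z$ by a $Q$-periodic mollifier $\widetilde\eta_{\delta_k}$, obtaining the final candidate $\psi_k$. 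A Fubini-type argument, using that $S_\rho^*$ is a combination of translations in $z$ and integration against $\rho(\xi)^{1/2}/|\xi|$ in $\xi$, yields
\[
S_\rho^*(\psi_k)(x,z)=\int_{\R^3}\!\int_{Q}\eta_{\delta_k}(y)\widetilde\eta_{\delta_k}(w)\,S_\rho^*(\widetilde\psi_k)(x-y,z-w)\,\dd w\,\dd y=0.
\]
Standard mollifier estimates give $\norm{\psi_k(\cdot,\xi,\cdot)}_{C^2(\Omega\times Q)}\le C_k\norm{\widetilde\psi_k(\cdot,\xi,\cdot)}_{L^2(\Omega\times Q)}\le C_k\norm{\psi(\cdot,\xi,\cdot)}_{L^2(\Omega\times Q)}$ with $C_k$ depending only on $\delta_k$; squaring and integrating in $\xi$ yields the required integrability condition in \eqref{eq:smoothpsik}. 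The convergence $\psi_k\to\psi$ in $L^2(\Omega\times\R^3;L^2_\#(Q;\R^3))$ follows from a standard diagonal argument: dominated convergence gives $\chi_k\psi\to\psi$, and the mollifications converge as $\delta_k\to 0$.

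I expect the main technical obstacle to be the rigorous justification of the commutation of $S_\rho^*$ with the convolutions, specifically the Fubini swap that pulls $S_\rho^*$ through $\eta_{\delta_k}\ast_x\widetilde\eta_{\delta_k}\ast_z$. The relevant integrand involves the singular-looking factor $\rho(\xi)^{1/2}/|\xi|$, and integrability must be verified in order to apply Fubini. Here one exploits that $\widetilde\psi_k$ has compact support in $x$, is $L^2$ in $(x,z)$, and that $\psi\in\Dom(S_\rho^*)$ ensures $\rho(\xi)^{1/2}|\xi|^{-1}(\psi(x,\xi,z-\xi)-\psi(x,\xi,z))$ is absolutely integrable in $\xi$ for a.e.~$(x,z)$. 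The membership $\psi_k\in\Dom(S_\rho^*)$ is then a consequence of the pointwise identity $S_\rho^*(\psi_k)=0$ together with the duality characterization in the proof of Lemma~\ref{le:Aoperator}, since $0\in L^2(\Omega;L^2_\#(Q;\R^3))$.
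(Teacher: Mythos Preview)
Your proposal is correct and takes essentially the same approach as the paper: both construct $\psi_k$ via a smooth cutoff in $x$ combined with mollification in $(x,z)$, and both verify $S_\rho^*(\psi_k)=0$ by pulling $S_\rho^*$ through the convolution via Fubini, exploiting that the operator acts only on the $(\xi,z)$-variables. The only cosmetic differences are that the paper applies the cutoff \emph{after} the mollification (writing $\psi_k=\chi_k\cdot(\varphi_k*\bar\psi)$ with a single joint mollifier on $\R^6$ and the zero-extension $\bar\psi$), ties the mollification scale directly to $1/k$ rather than introducing a separate $\delta_k$ with a diagonal argument, and does not explicitly separate the $x$- and $z$-mollifiers.
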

\begin{proof}
      Let $\varphi$ be a standard 
     mollifier, i.e., \(\varphi\in C_c^\infty(\R^3\times \R^3; [0,+\infty))\), with \(\supp \varphi \subset B_1(0)\) and \(\norm{\varphi}_{L^1(\R^3\times \R^3)}=1\). Since we will mollify simultaneously in the \(x\) and \(z\) arguments, we use the notation \(y=(y_1,y_2)\in \R^3\times\R^3\) for the mollifying variable. For $k \in \N$, we define \(\varphi_k(y)\coloneqq k^{6}\varphi(ky)\). Moreover, we choose an associated sequence of cut-off functions \(\seq{\chi}\subset C_c^\infty(\Om;[0,1])\) such that \(\chi_k= 1\) on \(\Om_{3/k}\) and \(\chi_k=0\) on \(\Om\setminus \Om_{2/k}\) for all $k \in \N$. Then, we define the sequence
     \[
     \psi_k(x,\xi,z):=\chi_k(x) (\varphi_k * \psi(\cdot,\xi,\cdot))(x,z) =  \chi_k(x)\int_{\R^6}\varphi_{k}(y)\bar{\psi}(x-y_1,\xi,z-y_2)\dl y \quad \text{for $k \in \N$,}
     \]
     where \(\bar{\psi}\) denotes the extension in \(x\) of \(\psi\) to the outside of \(\Om\) by zero. The first property in \eqref{eq:smoothpsik} is clear by the definition of $\varphi_{k}$ and $\chi_k$, while the second one is readily verified via Young's convolution inequality. Furthermore, by standard properties of mollification we know that $\psi_k \to \psi$ in $L^2(\Omega\times\R^3;L^2_{\#}(Q;\R^3))$ as $k \to \infty$. It remains to show that $S_\rho^*(\psi_k)=0$ for all $k \in \N$, which can be done via a direct computation. Indeed, we have that
     \begin{align*}
         S_\rho^*(\psi_k)(x,z)&=\int_{\R^3}\rho(\xi)^{1/2}\frac{(\psi_k(x,\xi,z-\xi)-\psi_k(x,\xi,z))}{\abs{\xi}}\dl \xi \\
         &= \chi_k(x) \int_{\R^6} \varphi_k(y)\int_{\R^3}\rho(\xi)^{1/2}\frac{(\bar{\psi}(x-y_1,\xi,z-\xi-y_2)-\bar{\psi}(x-y_1,\xi,z-y_2))}{\abs{\xi}}\dd \xi\dl y \\
         &=\chi_k(x) \int_{\R^6} \varphi_k(y) S_\rho^{*}(\bar{\psi})(x-y_1,z-y_2)\dl y =0,
     \end{align*}
     where we have used that $S_\rho^*(\psi)=0$ in the last equality.
\end{proof}
We now state the main result of this section, which will play a crucial role in the proof of the liminf-inequality. It consists of a compactness result with a suitable $x$-two-scale convergence result in this setting. 

For $m \in L^2(\Omega;\R^3)$, we introduce the auxiliary function
\begin{equation}\label{eq:vepsform}
\Delta^{\rho}_\epsilon [m](x,\xi):=\1_{\Omega_{\epsilon\xi}}(x)\rho(\xi)^{1/2}\frac{(m(x+\epsilon\xi)-m(x))}{\epsilon|\xi|},
\end{equation}
defined for a.e.\ $(x,\xi) \in \Omega \times \R^3$. We have then the following result, of which Part~\ref{it:2scaleii} and Part~\ref{it:2scaleiii} can be seen as nonlocal analogs of the two-scale decomposition in \cite[Proposition~1.14~(i)]{All92} and \cite[Proposition~2.2]{DaD20}.
\begin{theorem}[Equi-coercivity and two-scale compactness]\label{th:2scale}
    Let $(m_\epsilon)_\epsilon \subset L^2(\Omega;\R^3)$ be a bounded sequence such that 
    \begin{equation}\label{eq:bound}
    \limsup_{\epsilon \to 0} \Ecal_\epsilon(m_\epsilon) < +\infty.
    \end{equation}
    Then, there exist a unique $m_0 \in H^1(\Omega;\R^3)$ and $w \in L^2(\Omega;L^2_{\#,0}(Q;\R^3))$ such that up to a non-relabeled subsequence:
    \begin{enumerate}[label=(\roman*)]
    \item \label{it:2scalei} The sequence $(m_\epsilon)_\epsilon$ converges to $m_0$ in $L^2(\Omega;\R^3)$.
        
        \item \label{it:2scaleii} The sequence $(\Delta_\epsilon^\rho[m_\veps])_\veps$ weakly $x$-two-scale converges in $L^2(\Omega\times\R^3;\R^3)$ to
    \begin{equation}\label{eq:vform}
    \Delta^\rho[m_0,w](x,\xi,z):=\rho(\xi)^{1/2}\left(\nabla m_0(x)\frac{\xi}{|\xi|}+\frac{w(x,z+\xi)-w(x,z)}{|\xi|}\right).
    \end{equation}
    \item \label{it:2scaleiii} If additionally $(m_\epsilon)_\epsilon \subset L^2(\Omega;\S^2)$, then $w(x,z) \in T_{m_0(x)}\S^2$ for a.e.~$z\in Q$.
    \end{enumerate}
\end{theorem}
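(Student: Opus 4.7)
The plan is to address each of the three parts in turn, leveraging the two-scale toolkit of Section~\ref{sec:prelim} together with the operator-theoretic results of Lemma~\ref{le:Aoperator} and Lemma~\ref{le:densitykernel}. For Part~\ref{it:2scalei}, I would first absorb the antisymmetric contribution into the symmetric one: writing $|\nu|=(|\nu|/\rho^{1/2})\,\rho^{1/2}$, using $|m_\epsilon(y)\times m_\epsilon(x)|\le|m_\epsilon(y)-m_\epsilon(x)|$ (from $m\times m=0$ and $|m_\epsilon|=1$), and applying Cauchy--Schwarz with hypothesis \ref{H4}, a constant $C>0$ arises so that $|\Hcal_\epsilon(m_\epsilon)|\le C\,\Fcal_\epsilon(m_\epsilon)^{1/2}$. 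Combined with \eqref{eq:bound}, this forces $\sup_\epsilon\Fcal_\epsilon(m_\epsilon)<+\infty$, and a standard BBM-type compactness argument for the nonlocal symmetric exchange energy (as e.g.\ in \cite{DDG24}) yields, along a subsequence, $m_\epsilon\to m_0$ in $L^2(\Omega;\R^3)$ with $m_0\in H^1(\Omega;\R^3)$.

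For Part~\ref{it:2scaleii}, the bound on $\Fcal_\epsilon(m_\epsilon)$ together with \ref{H1} gives a uniform $L^2(\Omega\times\R^3;\R^3)$-bound on $\Delta^\rho_\epsilon[m_\epsilon]$, so Lemma~\ref{lem:xts_comp} produces an $x$-two-scale subsequential limit $v$. The key step is to identify the corrector $V(x,\xi,z):=v(x,\xi,z)-\rho(\xi)^{1/2}\nabla m_0(x)\tfrac{\xi}{|\xi|}$ as an element of the range of $S_\rho$ (viewed as acting in the $z$-variable with $x$ a parameter). Since this range is closed by Lemma~\ref{le:Aoperator}, it coincides with $\ker(S^*_\rho)^\perp$, and by Lemma~\ref{le:densitykernel} it suffices to verify $\inner{V,\psi}=0$ against every $\psi\in\ker(S^*_\rho)$ satisfying \eqref{eq:smoothpsik}. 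For such $\psi$ I would pass to the $x$-two-scale limit in $\int\Delta^\rho_\epsilon[m_\epsilon](x,\xi)\cdot\psi(x,\xi,x/\epsilon)\,d\xi\,dx$, then perform a discrete integration by parts by changing variable $x\mapsto x+\epsilon\xi$ (combined with $Q$-periodicity of $\psi$ in $z$) to rewrite the integrand as $m_\epsilon$ paired with a discrete-adjoint kernel. A first-order Taylor expansion in $x$ splits this into three contributions: an $O(1/\epsilon)$ singular term proportional to $S^*_\rho(\psi)/\epsilon$ which vanishes identically; a boundary-strip remainder coming from the indicator $\1_{\Omega_{\epsilon\xi}}$, which is negligible in the limit; and a main $O(1)$ term which, after invoking the strong $L^2$-convergence $m_\epsilon\to m_0$ and integration by parts on $\Omega$ (the compact $x$-support of $\psi$ annihilates boundary terms), evaluates exactly to $\int\rho(\xi)^{1/2}\nabla m_0(x)\tfrac{\xi}{|\xi|}\cdot\psi(x,\xi,z)\,dz\,d\xi\,dx$. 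Matching this with $\int v\cdot\psi$ yields $\inner{V,\psi}=0$, whence $V=S_\rho(w)$ for some $w\in L^2(\Omega;L^2_{\#,0}(Q;\R^3))$ (subtracting the $z$-mean preserves $S_\rho(w)$). Uniqueness of $m_0$ is immediate from the $L^2$-convergence, and uniqueness of $w$ follows from injectivity of $S_\rho$ on zero-mean functions, guaranteed by the nonlocal Poincar\'e inequality \eqref{eq:nonlocal_poincare}.

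For Part~\ref{it:2scaleiii}, the sphere constraint $|m_\epsilon|\equiv 1$ yields the pointwise identity
\[
m_\epsilon(x)\cdot\Delta^\rho_\epsilon[m_\epsilon](x,\xi)=-\frac{\epsilon|\xi|}{2\rho(\xi)^{1/2}}\bigl|\Delta^\rho_\epsilon[m_\epsilon](x,\xi)\bigr|^2\,\1_{\Omega_{\epsilon\xi}}(x).
\]
Testing against $\eta(\xi)\varphi(x,x/\epsilon)$ with $\varphi\in C_c(\Omega;C_\#(Q))$ and $\eta\in C_c(B_r(0))$ (for the $r$ of \ref{H2}), the right-hand side is $O(\epsilon)$ because \eqref{eq:rhocoercive} makes $|\xi|/\rho(\xi)^{1/2}$ bounded on $B_r(0)$ and $\|\Delta^\rho_\epsilon[m_\epsilon]\|_{L^2}$ is uniformly controlled. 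The left-hand side, via $m_\epsilon\to m_0$ strongly in $L^2$ together with Lemma~\ref{le:sxts_nocontinuity} and Lemma~\ref{lem:xts_products}\,\ref{xts_products-i)}, passes to the limit $\int_\Omega\int_{B_r(0)}\int_Q\eta(\xi)\varphi(x,z)\,m_0(x)\cdot v(x,\xi,z)\,dz\,d\xi\,dx=0$. Arbitrariness of $\varphi,\eta$ forces $m_0(x)\cdot v(x,\xi,z)=0$ for a.e.\ $\xi\in B_r(0)$, and substituting \eqref{eq:vform} together with $m_0\cdot\nabla m_0=0$ (from $|m_0|=1$) collapses this to $m_0(x)\cdot w(x,z+\xi)=m_0(x)\cdot w(x,z)$ for a.e.\ $\xi\in B_r(0)$. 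Iterating translations by elements of $B_r(0)$ generates all of $\R^3$, so $z\mapsto m_0(x)\cdot w(x,z)$ is constant on $Q$; the zero-mean condition then forces it to vanish, giving $w(x,z)\in T_{m_0(x)}\S^2$.

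The principal obstacle is Part~\ref{it:2scaleii}: uniform control of the Taylor remainder in the $\xi$-integration---made possible precisely by the $L^2_\xi$-integrability of the $C^2$-norm of $\psi$ in \eqref{eq:smoothpsik}---combined with careful bookkeeping of the boundary indicators $\1_{\Omega_{\epsilon\xi}}$, so that the cancellation $S^*_\rho(\psi)=0$, the subsequent limit, and the integration by parts producing $\nabla m_0$ combine cleanly into the required orthogonality identity.
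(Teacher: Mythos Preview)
Your proposal follows essentially the same strategy as the paper in all three parts: BBM-type compactness for Part~\ref{it:2scalei}, the operator-theoretic identification via $\ker(S_\rho^*)^\perp=\mathrm{Ran}(S_\rho)$ and the density Lemma~\ref{le:densitykernel} for Part~\ref{it:2scaleii}, and the algebraic consequence of $|m_\epsilon|=1$ for Part~\ref{it:2scaleiii}. There is one genuine slip: in Part~\ref{it:2scalei} you invoke $|m_\epsilon|=1$ to obtain $|m_\epsilon(y)\times m_\epsilon(x)|\le|m_\epsilon(y)-m_\epsilon(x)|$, but the theorem only assumes $(m_\epsilon)_\epsilon\subset L^2(\Omega;\R^3)$ is bounded---the sphere constraint enters only in Part~\ref{it:2scaleiii}. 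The fix is immediate (keep the factor $|m_\epsilon(x)|$ and use the $L^2$-bound, yielding $|\Hcal_\epsilon(m_\epsilon)|\le C\Fcal_\epsilon(m_\epsilon)^{1/2}\|m_\epsilon\|_{L^2}$, exactly as in Lemma~\ref{lem:antisym_by_sym}), so this does not affect the argument. Minor variations elsewhere---your direct boundary-strip estimate versus the paper's reduction to compactly supported $\rho$, and your identity $m_\epsilon\cdot\Delta^\rho_\epsilon[m_\epsilon]=-\tfrac{\epsilon|\xi|}{2\rho^{1/2}}|\Delta^\rho_\epsilon[m_\epsilon]|^2$ restricted to $\xi\in B_r(0)$ versus the paper's use of $|m_\epsilon(x+\epsilon\xi)|^2-|m_\epsilon(x)|^2=0$ for all $\xi$---are inessential reorganizations of the same ideas.
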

\begin{proof}
\textit{Part (i).} Due to the boundedness of $(m_\epsilon)_\epsilon$ in $L^2(\Omega;\R^3)$ and Lemma~\ref{lem:antisym_by_sym}, it holds that also $\limsup_{\epsilon \to 0} \Fcal_\epsilon(m_\epsilon) < +\infty$, and therefore using that $a \geq a_0>0$ from \ref{H1}, it yields
    \begin{equation}\label{eq:referenceenergy}
    \limsup_{\epsilon \to 0} \int_{\Omega} \int_{\Omega}\frac{1}{\epsilon^3}\rho\left(\frac{y-x}{\epsilon}\right)\frac{|m_\epsilon(y)-m_\epsilon(x)|^2}{|y-x|^2}\dd x \dd y < +\infty.
    \end{equation}
  In light of Assumption \ref{H2}, $\rho$ is (up to a constant) lower bounded by the radial function $\tilde{\rho}(\xi):=\mathds{1}_{B_r(0)}(\xi)\abs{\xi}^2$ for some $r > 0$. Then, considering \eqref{eq:referenceenergy} with the kernel $\tilde{\rho}_\veps(\xi):=  c \, \veps^{-3} \, \tilde{\rho}(\xi/\veps)$ and $c = \|\tilde{\rho}\|^{-1}_{L^1(\R^3)}$, we are under the assumptions of \cite[Theorem~1.2]{Pon04}, from which we deduce the statement. \medskip

    \textit{Part (ii).} 
    We assume without loss of generality that $\rho$ has compact support in $B_R(0)$ for some $R > 0$, since otherwise we may multiply $\rho$ by $\mathds{1}_{B_R(0)}$ and exhaust $\R^3$ by letting $R \to \infty$. 
    
    By Part~\ref{it:2scalei} we find, up to a non-relabeled subsequence, that $m_\epsilon \to m_0$ in $L^2(\Omega;\R^3)$ as $\varepsilon \to 0$ for some $m_0 \in H^1(\Omega;\R^3)$. Using \eqref{eq:bound} we can invoke the $x$-two-scale compactness in Lemma~\ref{lem:xts_comp} to infer that there exists some $v \in L^2(\R^3 \times \Omega \times Q;\R^3)$ such that, for every $\psi \in L^2(\Omega \times \R^3;C_{\#}(Q;\R^3))$,
    \begin{equation}\label{eq:twoscalelimit}
    \lim_{\epsilon \to 0} \int_{\R^3}\int_{\Omega} \Delta_\epsilon^\rho[m_\veps](x,\xi) \cdot \psi(x,\xi,x/\epsilon)\dd x\dd \xi = \int_{\R^3}\int_{\Omega}\int_{Q}v(x,\xi,z) \cdot \psi(x,\xi,z)\dd z\dd x \dd \xi.
    \end{equation}    
    It remains to prove that $v$ has the form as in \eqref{eq:vform}. We then consider a test function $\psi$ that has compact support in the first argument and that is smooth in the first and third argument, namely 
    \begin{equation}\label{eq:smoothpsi}
        \supp \psi(\cdot,\xi,z) \subset K 
        \ \ \text{for a.e.~$(\xi,z) \in \R^3 \times Q$} \quad \text{and} \quad \int_{\R^3}\norm{\psi(\cdot,\xi,\cdot)}_{C^2(\Omega \times Q)}^2\dd \xi <+ \infty,
    \end{equation}
    for some $K \Subset \Omega$. 
    We then choose $R>0$ large enough such that $K \times \supp \rho \subset \Omega_{1/R} \times B_R(0)$, where we remind that $\Omega_{1/R}  = \{ x \in \Omega : \operatorname{dist}(x, \partial \Om) > 1/R\} $. Thus, for every $\epsilon <1/R^2$
    \begin{equation}\label{eq:mid-term}
    \int_{\R^3}\int_{\Omega} \Delta_\epsilon^\rho[m_\veps](x,\xi) \cdot \psi(x,\xi,x/\epsilon)\dd x\dd \xi = \int_{\R^3}\int_{\R^3} \rho(\xi)^{1/2}\frac{(m_\epsilon(x+\epsilon\xi)-m_\epsilon(x))}{\epsilon|\xi|} \cdot \psi(x,\xi,x/\epsilon)\dd x\dd \xi,
    \end{equation}
    where we extended all functions $m_\veps$ as zero outside $\Omega$. For such $\epsilon > 0$, via the change of variables $x \mapsto x - \veps \xi$ for fixed $\xi \in \R^3$, we find that
    \begin{align}
        &\int_{\R^3}\int_{\R^3} \rho(\xi)^{1/2}\frac{(m_\epsilon(x+\epsilon\xi)-m_\epsilon(x))}{\epsilon|\xi|} \cdot \psi(x,\xi,x/\epsilon)\dd x\dd \xi \nonumber \\
        = &\int_{\R^3}\int_{\R^3}m_\epsilon(x) \cdot \left(\rho(\xi)^{1/2}\frac{(\psi(x-\epsilon\xi,\xi,x/\epsilon-\xi)-\psi(x,\xi,x/\epsilon))}{\epsilon|\xi|}\right)\dd x\dd \xi \label{eq:VT-1}\\
        = & \int_{\R^3}\int_{\R^3}m_\epsilon(x) \cdot\left(\rho(\xi)^{1/2}\frac{(\psi(x-\epsilon\xi,\xi,x/\epsilon-\xi)-\psi(x,\xi,x/\epsilon-\xi))}{\epsilon|\xi|}\right)\dd x\dd \xi \label{eq:VT-2} \\
        &\qquad+\int_{\R^3}\int_{\R^3}m_\epsilon(x) \cdot\left(\rho(\xi)^{1/2}\frac{(\psi(x,\xi,x/\epsilon-\xi)-\psi(x,\xi,x/\epsilon))}{\epsilon|\xi|}\right)\dd x\dd \xi, \nonumber
    \end{align}
    where in the last equality \eqref{eq:VT-2} we added and subtracted the quantity $\psi(x,\xi,x/\epsilon-\xi)$. 
    In particular, choosing $\psi$ such that $S_\rho^*(\psi)=0$ (cf.~\eqref{eq:Aadjoint}), we have that the second integral in \eqref{eq:VT-2} vanishes. 
    
    Hence, by the strong $x$-two-scale convergence of $m_\epsilon \to m_0$ and the smoothness of $\psi$ in \eqref{eq:smoothpsi}, we are able to show that 
    \begin{equation}\label{1 - limit}
        \begin{split}
    \lim_{\epsilon \to 0} &\int_{\R^3}\int_{\Omega} \Delta_\epsilon^\rho[m_\veps](x,\xi) \cdot \psi(x,\xi,x/\epsilon)\dd x\dd \xi \\
    \qquad &= \int_{\R^3}\int_{\R^3}\int_{Q}m_0(x) \cdot\left(\rho(\xi)^{1/2}\nabla_x \psi(x,\xi,z-\xi)\cdot\frac{-\xi}{|\xi|}\right)\dd z\dd x\dd \xi, 
    \end{split}
    \end{equation}
    where we use 
    Lemma \ref{lem:xts_products} and the fact that \begin{equation*}
        \rho(\xi)^{1/2}\frac{(\psi(x-\epsilon\xi,\xi,x/\epsilon-\xi)-\psi(x,\xi,x/\epsilon))}{\epsilon|\xi|} \, \wxtwoscale \,\rho(\xi)^{1/2}\nabla_x \psi(x,\xi,z-\xi) \quad \text{in } L^2(\Omega \times \R^3; \R^3). \quad 
    \end{equation*} Indeed, to show this last convergence, we rely on the smoothness of $\psi$ and on the control 
    \begin{equation*}
        \left| \frac{\psi(x-\epsilon\xi,\xi,x/\epsilon-\xi)-\psi(x,\xi,x/\epsilon-\xi)}{\veps|\xi|} - \nabla_x \psi(x,\xi,x/\epsilon-\xi)\cdot\frac{-\xi}{|\xi|} \right| \leq \frac{1}{2} \veps |\xi| \, \|\nabla_x^2 \psi(\cdot,\xi,\cdot)\|^2_{C(\Omega\times Q)}
    \end{equation*} for every $\xi \in B_R(0)$, and thus, by dominated convergence theorem, we get the strong convergence to zero in $L^2(\Omega \times \R^3; \R^3).$ Additionally, by continuity of $\nabla_x \psi$ in the third variable and Lemma \ref{le:sxts_nocontinuity}, we also used that 
    \begin{equation*}
        \rho(\xi)^{1/2}\nabla_x \psi(x,\xi,x/\epsilon-\xi) \, \xtwoscale \, \rho(\xi)^{1/2}\nabla_x \psi(x,\xi,z-\xi) \quad \text{in } L^2(\Om \times \R^3; \R^3).
    \end{equation*}

    We now rewrite the limit in \eqref{1 - limit} as
    \begin{align*} &\int_{\R^3}\int_{\R^3}\int_{Q}m_0(x) \cdot \left(\rho(\xi)^{1/2}\nabla_x \psi(x,\xi,z-\xi)\frac{-\xi}{|\xi|}\right)\dd z\dd x\dd \xi 
    \\
    \qquad = &\int_{\R^3}\int_{\R^3}\int_{Q}\rho(\xi)^{1/2}\nabla m_0(x) \frac{\xi}{|\xi|}\cdot \psi(x,\xi,z-\xi)\dd z\dd x\dd \xi 
    \\
    \qquad = &\int_{\R^3}\int_{\R^3}\int_{Q}\rho(\xi)^{1/2}\nabla m_0(x) \frac{\xi}{|\xi|}\cdot \psi(x,\xi,z)\dd z\dd x\dd \xi, 
    \end{align*}
    where 
    we first used integration by parts with the fact that $\psi$ and $\rho$ have compact support, and then we applied the change of variable $z \mapsto z + \xi$ for fixed $\xi \in Q$. 
    In light of \eqref{eq:twoscalelimit}, we deduce that
    \begin{equation}\label{eq:orthogonal}
    \int_{\R^3} \int_{\Omega} \int_{Q}\left(v(x,\xi,z)-\rho(\xi)^{1/2}\nabla m_0(x) \frac{\xi}{|\xi|}\right)\cdot \psi(x,\xi,z)\dd z\dd x\dd \xi=0,
    \end{equation}
    for all $\psi$ satisfying \eqref{eq:smoothpsi} and $S_\rho^*(\psi)=0$. 
    
    Using the density result in Lemma~\ref{le:densitykernel}, we find that \eqref{eq:orthogonal} even holds for every $\psi$ in the kernel of $S_\rho^*$. By Lemma~\ref{le:Aoperator} and \cite[Theorem~2.19\,(iii)]{Bre11}, we note that the orthogonal complement of the kernel of $S_\rho^*$ coincides with the range of $S_\rho$, and so we deduce that there exists an element $w \in \Dom(S_\rho)\subset L^2(\Omega ;L^2_{\#}(Q;\R^3)) $ with average zero in the second argument such that
    \begin{equation}\label{eq:identification-limit}
    \left(v(x,\xi,z)-\rho(\xi)^{1/2}\nabla m_0(x)\cdot \frac{\xi}{|\xi|}\right)=S_\rho(w)(x,z) = \rho(\xi)^{1/2}\frac{(w(x,z+\xi)-w(x,z))}{|\xi|} 
    \end{equation}
    for a.e.~$(x,\xi,z) \in \Omega\times \R^3\times Q$. This proves that $v=\Delta^\rho[m_0,w]$ as desired.\medskip


    \textit{Part (iii).} 
    For $\veps > 0$, we have that
    \begin{equation*}
     \int_{\R^3} \int_{\Om_{\veps\xi}} \rho(\xi)^{1/2}\frac{(|m_\epsilon(x+\epsilon \xi)|^2-|m_\epsilon(x)|^2)}{\epsilon|\xi|}\psi(x,\xi,x/\epsilon)\dd x \dd \xi = 0   
    \end{equation*}
    for every $\psi \in C_c^{\infty}(\Omega \times \R^3;C_{\#}^{\infty}(Q))$, since $|m_\epsilon(x+\epsilon \xi)|=|m_\epsilon(x)| = 1 $ for a.e.\ $\xi \in \R^3$ and $x \in \Om_{\veps\xi}$.
    Hence, for every $\psi \in C_c^{\infty}(\Omega \times \R^3;C_{\#}^{\infty}(Q))$, we infer that
    \begin{align}
        0 &=\lim_{\epsilon\to 0}\int_{\R^3} \int_{\Om_{\veps\xi}}\rho(\xi)^{1/2}\frac{(|m_\epsilon(x+\epsilon \xi)|^2-|m_\epsilon(x)|^2)}{\epsilon|\xi|}\psi(x,\xi,x/\epsilon)\dd x \dd \xi \nonumber  \\ 
        &=\lim_{\epsilon\to 0}\int_{\R^3} \int_{\Om_{\veps\xi}}\rho(\xi)^{1/2}\frac{(m_\epsilon(x+\epsilon \xi)-m_\epsilon(x))}{\epsilon|\xi|}\cdot(m_\epsilon(x+\epsilon\xi)+m_\epsilon(x)) \cdot\psi(x,\xi,x/\epsilon)\dd x \dd \xi \nonumber \\
        &=\lim_{\epsilon\to 0}\int_{\R^3} \int_{\Omega}\Delta_\epsilon^\rho[m_\veps](x,\xi)\cdot(m_\epsilon(x+\epsilon\xi)+m_\epsilon(x)) \cdot\psi(x,\xi,x/\epsilon)\dd x \dd \xi \label{use-delta}\\
        &=2 \int_{\R^3} \int_{\Om} \int_{Q} \frac{\rho(\xi)^{1/2}}{|\xi|}\left(\nabla m_0(x)\xi+w(x,z+\xi)-w(x,z)\right)\cdot m_0(x) \cdot \psi(x,\xi,z)\dd z\dd x\dd \xi \label{use-limit}
    \end{align}
    with $w \in L^2(\Omega;L^2_{\#,0}(Q;\R^3))$,
where in \eqref{use-delta} we used  \eqref{eq:vepsform}, while in \eqref{use-limit} we exploited the weak $x$-two-scale convergence result in $L^2(\Omega\times\R^3;\R^3)$  from Part~\ref{it:2scaleii}, together with Lemma~\ref{lem:xts_products}\,\ref{xts_products-i)}. 

Since $\nabla m_0(x) \cdot m_0(x) =0$ for a.e.~$x \in \Omega$, by \eqref{use-limit} we also find that
\[
\rho(\xi)^{1/2}\left(w(x,z+\xi)-w(x,z)\right)\cdot m_0(x)=0 \quad \text{for a.e.~$(x,\xi,z) \in \Omega \times \R^3 \times Q$,}
\]
which in turn, due to the kernel assumption \eqref{eq:rhocoercive},  yields
\[
w(x,z)\cdot m_0(x)=c \quad \text{for a.e.~$(x,z) \in \Omega\times Q$ and for some } c \in \R.
\]
Since $w$ has average zero in the second argument, we must have $c=0$ as desired. This concludes the proof.
\end{proof}
\begin{remark}[Extension to higher dimensions and more general manifolds]
    Part~\ref{it:2scalei} and~\ref{it:2scaleii} of Theorem~\ref{th:2scale} can readily be generalized to arbitrary dimensions, that is, $\Omega \subset \R^n$ and $(m_\epsilon)_\epsilon \subset L^2(\Omega;\R^m)$ with $n,m \in \N$. Moreover, Part~\ref{it:2scaleiii} also extends to more general target manifolds $\Mcal$ that can be written as the inverse image $\gamma^{-1}(\{0\})$ for some $C^1$-function $\gamma:U \subset \R^m \to \R^{k}$ with $k<m$ that has zero as a regular value. Indeed, we can then use the fundamental theorem of calculus to find that
    \[
    \gamma(m_\epsilon(x+\epsilon \xi))-\gamma(m_\epsilon(x)) = \ (m_\epsilon(x+\epsilon \xi)-m_\epsilon(x)) \cdot \int_0^1 \nabla \gamma(tm_\epsilon(x+\epsilon \xi)+(1-t)m_\epsilon(x))\dd t.
    \]
    By the dominated convergence theorem, the integral converges to $\nabla \gamma(m_0)$ in $L^2(\Omega \times B_R(0);\R^n)$ for any $R>0$ as $\epsilon \to 0$. We can now argue as in Part~\ref{it:2scaleiii} to find that
    \[
    \nabla \gamma(m_0(x))  w(x,z)  = 0 \quad \text{for a.e.~$(x,z) \in \Omega\times Q$,}
    \]
    with $w \in L^2(\Omega;L^2_{\#,0}(Q;\R^m)).$
    Since the rows of $\nabla \gamma$ span the normal space to $\Mcal$, we deduce that $w(x,z) \in T_{m_0(x)}\Mcal$ for a.e.~$z \in Q$.
\end{remark}
\begin{remark}[Two-scale convergence of difference quotients]
    The result of Theorem~\ref{th:2scale} can also be 
    compared to the findings in \cite[Section~4]{Vi06}. To see this, we set \(e_i\) the unit vector in direction of the \(x_i\)-axis and define for \(v\in L^2(\Omega;\R^3)\) the difference quotient in direction \(e_i\) as
    \begin{equation*}
        \Delta^i_\veps[v](x)
        \coloneqq \frac{v(x+\veps e_i)-v(x)}{\veps }.
    \end{equation*}
    It then follows from \cite[Lemma~4.1, and Proposition~4.2]{Vi06} that if \(\sequence{v}{\veps}\) is a sequence in \(L^2(\R^3;\R^3)\) converging weakly two-scale to some \(v\in L^2(\R^3\times Q;\R^3)\) and such that 
    \begin{equation*}
        \sup_\veps\norm{\Delta^i_\veps[v]}_{L^2(\R^3)}
        <+\infty,
    \end{equation*}
    then \(\Delta^i_\veps[v]\) converges in the limit \(\veps\to 0\) weakly two-scale to the partial derivative \(\nabla_{x_i} v\). 
    The proof relies on the fact that 
    \begin{equation*}
        \Delta^i_\veps[\psi](x)
        = \frac{\psi(x+\veps e_i,x/\veps)-\psi(x,x/\veps)}{\veps}
    \end{equation*} 
    for test functions which are periodic in the second variable. In other words, \(\Delta^i_\veps[\psi](x)\) is a difference quotient only in the \(x\)-variable. This phenomenon is also reflected in the proof of Theorem~\ref{th:2scale} above in \eqref{eq:VT-2}, where the second integral vanishes for \(\xi\in \Z^3\). 

    Let now \(\xi\in \R^3\) be arbitrary and fixed. Similar considerations as applied in Lemma~\ref{le:Aoperator}--Theorem~\ref{th:2scale} and paired with the proof strategy of \cite[Proposition~4.2]{Vi06}, lead to the following result. We define 
    \begin{equation*}
        \Delta^\xi_\veps[v](x)
        \coloneqq \frac{v(x+\veps \xi)-v(x)}{\veps |\xi|}.
    \end{equation*}
    Then, if \(\sequence{v}{\veps}\in L^2(\R^3;\R^3)\) converges weakly two-scale to \(v\in L^2(\R^3\times Q;\R^3)\) and fulfills
    \begin{equation}\label{eq:ap_est}
        \sup_\veps\norm{\Delta^\xi_\veps[v_\veps]}_{L^2(\R^3)}
        <+\infty,
    \end{equation}
    there exist a $w \in L^2(\R^3;L^2_{\#,0}(Q;\R^3))$ such that
    \begin{equation}\label{eq:vis_aug}
       \Delta^\xi_\veps[v_\epsilon] \wtwoscale (\nabla_{\xi/\abs{\xi}} v(\cdot,z))(x)+\frac{w(x,z+\xi)-w(x,z)}{|\xi|},
    \end{equation}
    where $\nabla_h$ denotes the directional derivative in the direction $h \in \R^3$; note that $w$ may depend on $\xi$ in this case. The difference quotient on the right-hand side lies in the range of the linear operator $S_\xi:L^2(\R^3 ;L^2_{\#}(Q;\R^3)) \to L^2(\R^3;L^2_{\#}(Q;\R^3))$ defined by
    \[
    S_\xi(w)(x,z):= \frac{w(x,z+\xi)-w(x,z)}{|\xi|} \quad \text{for a.e.~$(x,z) \in \R^3\times Q$},
    \]
    which in this case is even bounded.

    We note that condition \eqref{eq:ap_est} is in general stronger than assumption \eqref{eq:bound} in Theorem~\ref{th:2scale}, i.e.,  need not to be satisfied if merely \(\limsup_{\epsilon \to 0} \Ecal_\epsilon(m_\epsilon) < +\infty\).
\end{remark}

We now introduce the pointwise projection map $\pi_{\S^2} : \R^3 \setminus \{ 0 \} \to \S^2$ such that $ \pi_{\S^2}(\theta) = \frac{\theta}{|\theta|}$. Note that away from the origin, $\pi_{\S^2}$ is well-defined and $C^{\infty}$-differentiable. Moreover, for $\theta \in \S^2$ we have that $\nabla \pi_{\S^2}(\theta)=\mathrm{Id} - \theta \otimes \theta$ is the orthogonal projection onto $T_\theta \S^2$. The following result shows that a sequence of the form $(\pi_{\S^2}(m_0(x)+\veps \varphi(x,x/\veps)))_\veps$, with $\varphi$ sufficiently smooth, recovers the two-scale limit where the microscopic oscillations are projected onto the tangent space of the sphere. 
\begin{lemma}[Two-scale convergence of projected sequence]\label{lem:sts_recov}
    Let $m_0 \in H^1(\Omega;\S^2)$ and \(\varphi \in C_c^\infty(\Om, C^\infty_{\#,0}(Q;\R^3))\). Then, denoting 
    \begin{equation*}
        m_\veps^\varphi(x)\coloneqq \pi_{\S^2}(m_0(x)+\veps \varphi(x,x/\veps)),
    \end{equation*}
    the sequence $(\Delta_\epsilon^\rho[m_\veps^\varphi])_\veps$ (see \eqref{eq:vepsform}) strongly $x$-two-scale converges in $L^2(\Omega\times\R^3;\R^3)$ to the limit \(\Delta^\rho[m_0,\nabla \pi_{\S^2}(m_0)\varphi]\) (see \eqref{eq:vform}).
\end{lemma}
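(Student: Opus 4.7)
The plan is to perform a first-order Taylor expansion of $\pi_{\S^2}$ around $m_0$. Since $|m_0|=1$ almost everywhere and $\varphi$ is uniformly bounded, for $\veps>0$ sufficiently small the argument $m_0(x)+\veps\varphi(x,x/\veps)$ lies in a compact subset of $\R^3\setminus\{0\}$ on which $\pi_{\S^2}$ is $C^\infty$. Setting $\varphi_\veps(x):=\varphi(x,x/\veps)$ and $w_\veps(x):=\nabla\pi_{\S^2}(m_0(x))\varphi_\veps(x)$, this yields the pointwise identity
\begin{equation*}
m_\veps^\varphi(x)=m_0(x)+\veps w_\veps(x)+\veps^2 R_\veps(x),
\end{equation*}
with $R_\veps$ uniformly bounded in $L^\infty(\Omega)$ and compactly supported in $\Omega$. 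Substituting this into $\Delta_\veps^\rho[m_\veps^\varphi]$ produces three contributions, analyzed separately. Since $\nabla\pi_{\S^2}(m_0(x))=\mathrm{Id}-m_0(x)\otimes m_0(x)$ projects onto $T_{m_0(x)}\S^2$ and $\varphi$ has zero mean in the periodic variable, the prospective limit $w:=\nabla\pi_{\S^2}(m_0)\varphi$ indeed belongs to $L^2(\Omega;L^2_{\#,0}(Q;T_{m_0}\S^2))$.

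The first contribution is $\Delta_\veps^\rho[m_0]$. A classical Bourgain--Brezis--Mironescu-type computation, first verified on $C_c^\infty$-functions via pointwise Taylor expansion and dominated convergence (using $\rho\in L^1$), and then extended to $H^1$ by density via the uniform bound $\|\Delta_\veps^\rho[u]\|_{L^2(\Omega\times\R^3)}\lesssim\|\nabla u\|_{L^2}$, shows that $\Delta_\veps^\rho[m_0]\to\rho(\xi)^{1/2}\nabla m_0(x)\xi/|\xi|$ strongly in $L^2(\Omega\times\R^3;\R^3)$, and Lemma~\ref{lem:ts_and_xts}\,\ref{it:xts_to_ts} upgrades this to strong $x$-two-scale convergence. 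The second contribution, $\rho(\xi)^{1/2}[w_\veps(x+\veps\xi)-w_\veps(x)]/|\xi|$, I would split into three pieces by inserting the auxiliary terms $\nabla\pi_{\S^2}(m_0(x))\varphi(x+\veps\xi,x/\veps+\xi)$ and $\nabla\pi_{\S^2}(m_0(x))\varphi(x,x/\veps+\xi)$. The two pieces capturing the variation of $\nabla\pi_{\S^2}\circ m_0$ and of $\varphi$ in the $x$-argument vanish in $L^2(\Omega\times\R^3;\R^3)$ as $\veps\to 0$, by virtue of the local Lipschitz continuity of $\nabla\pi_{\S^2}$ near $\S^2$ combined with the BBM-estimate for $m_0\in H^1$, and of the uniform smoothness of $\varphi$ in the first argument. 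The remaining piece, $\nabla\pi_{\S^2}(m_0(x))\cdot\rho(\xi)^{1/2}[\varphi(x,x/\veps+\xi)-\varphi(x,x/\veps)]/|\xi|$, strongly $x$-two-scale converges to $\rho(\xi)^{1/2}[\varphi(x,z+\xi)-\varphi(x,z)]/|\xi|$ by Lemma~\ref{le:sxts_nocontinuity} (whose admissibility hypothesis is satisfied thanks to the smoothness and periodicity of $\varphi$ and the bound $\|\varphi(\cdot,z+\xi)-\varphi(\cdot,z)\|_{C(\overline{\Omega})}\le|\xi|\|\nabla_z\varphi\|_\infty$); multiplication by the bounded, $z$-independent factor $\nabla\pi_{\S^2}(m_0(x))$ preserves strong $x$-two-scale convergence via Lemma~\ref{lem:xts_products}\,\ref{xts_products-iii)}, producing $\rho(\xi)^{1/2}[w(x,z+\xi)-w(x,z)]/|\xi|$.

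The main technical obstacle is the remainder $\veps\rho(\xi)^{1/2}[R_\veps(x+\veps\xi)-R_\veps(x)]/|\xi|$, because $R_\veps$ inherits only $L^\infty$-regularity from $m_0$ and the crude pointwise bound $2\|R_\veps\|_\infty$ fails, since $\rho(\xi)/|\xi|^2$ need not be integrable near the origin. To handle it, I would exploit the integral form
\begin{equation*}
R_\veps(x)=\tfrac{1}{2}\int_0^1(1-t)\,D^2\pi_{\S^2}\bigl(m_0(x)+t\veps\varphi_\veps(x)\bigr)\bigl[\varphi_\veps(x),\varphi_\veps(x)\bigr]\,dt,
\end{equation*}
so that $|R_\veps(x+\veps\xi)-R_\veps(x)|$ is pointwise controlled by $C(|m_0(x+\veps\xi)-m_0(x)|+|\varphi_\veps(x+\veps\xi)-\varphi_\veps(x)|)$. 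Both increments, once divided by $|\xi|$ and weighted by $\rho(\xi)^{1/2}$, have $L^2(\Omega\times\R^3)$-norm bounded uniformly in $\veps$: for the $m_0$-part by the BBM-estimate, for the $\varphi_\veps$-part by the decomposition $\varphi_\veps(x+\veps\xi)-\varphi_\veps(x)=[\varphi(x+\veps\xi,x/\veps+\xi)-\varphi(x,x/\veps+\xi)]+[\varphi(x,x/\veps+\xi)-\varphi(x,x/\veps)]$ together with the Lipschitz bounds $\|\nabla_x\varphi\|_\infty$ and $\|\nabla_z\varphi\|_\infty$ and $\rho\in L^1$. The extra prefactor $\veps$ then forces the whole remainder to vanish in $L^2(\Omega\times\R^3;\R^3)$, and summing the three contributions yields the claimed strong $x$-two-scale limit $\Delta^\rho[m_0,\nabla\pi_{\S^2}(m_0)\varphi]$.
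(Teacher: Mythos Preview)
Your proof is correct, but it follows a different organizational route from the paper's. The paper first treats the \emph{unconstrained} sequence $\hat m_\veps^\varphi = m_0 + \veps\varphi(\cdot,\cdot/\veps)$ and establishes $\Delta_\veps^\rho[\hat m_\veps^\varphi]\xtwoscale \Delta^\rho[m_0,\varphi]$; then, in a second step, it writes via the fundamental theorem of calculus
\[
m_\veps^\varphi(x+\veps\xi)-m_\veps^\varphi(x) = \Bigl(\int_0^1 \nabla\tilde\pi_{\S^2}\bigl(t\hat m_\veps^\varphi(x+\veps\xi)+(1-t)\hat m_\veps^\varphi(x)\bigr)\dd t\Bigr)\bigl(\hat m_\veps^\varphi(x+\veps\xi)-\hat m_\veps^\varphi(x)\bigr),
\]
shows the matrix factor converges to $\nabla\pi_{\S^2}(m_0)$ in $L^2(\Omega\times B_R)$, and concludes via Lemma~\ref{lem:xts_products}\,\ref{xts_products-iii)}. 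Your Taylor-expansion route instead bakes the projection in from the start and pays the price of having to control the second-order remainder explicitly; this works because, as you observe, $R_\veps(x+\veps\xi)-R_\veps(x)$ inherits a Lipschitz-type bound from the increments of $m_0$ and $\varphi_\veps$. A further difference is your treatment of $\Delta_\veps^\rho[m_0]$: the paper deduces the strong $x$-two-scale limit by combining Ponce's norm-convergence result with Theorem~\ref{th:2scale}\,\ref{it:2scaleii} (to obtain the weak limit $\Delta^\rho[m_0,w]$) and a Jensen argument (forcing $w=0$), whereas you argue a direct strong $L^2$-convergence via smooth approximation and the uniform bound $\|\Delta_\veps^\rho[u]\|_{L^2}\lesssim\|u\|_{H^1}$. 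Your route is more self-contained (it avoids invoking Theorem~\ref{th:2scale} and \cite{Pon04b}); the paper's factorization via the mean-value integral is cleaner in that it sidesteps any explicit remainder estimate. One minor remark: for the density step you should approximate $m_0$ by functions in $C^\infty(\overline\Omega;\R^3)$ rather than $C_c^\infty(\Omega)$, since the latter is not dense in $H^1(\Omega)$.
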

\begin{proof} \textit{Step 1: Unconstrained sequence.} 
Let $(\hat{m}^{\varphi}_\veps)_\epsilon \subset H^1(\Omega;\R^3)$ be the oscillating sequence defined by 
\begin{equation}\label{eq:hatmphi}
    \hat{m}^{\varphi}_\veps(x):= m_0(x) + \veps \varphi \left(x, \frac{x}{\veps}\right),
\end{equation}
for $\veps > 0$ and for a.e.\ $x \in \Om$. We will first show that
\[
\Delta^\rho_\epsilon[\hat{m}^{\varphi}_\veps] \xtwoscale \Delta^\rho[m_0,\varphi] \quad \text{in $L^2(\Omega\times\R^3;\R^3)$.}
\]
Indeed, we decompose  $\Delta_\epsilon^\rho[\hat{m}^{\varphi}_\veps] = \Delta_\epsilon^\rho[m_0]+ \veps \Delta_\epsilon^\rho[\varphi(\cdot,\cdot/\epsilon)]$ and prove separately that 
\begin{equation}\label{eq:I1}
    \Delta_\epsilon^\rho[m_0](x,\xi) \, \xtwoscale \, \rho(\xi)^{1/2}\nabla m_0(x) \frac{\xi}{\abs{\xi}} \quad \text{in $L^2(\Omega\times\R^3;\R^3)$}
\end{equation}
and
\begin{equation}\label{eq:I2}
    \veps \Delta_\epsilon^\rho[\varphi(\cdot,\cdot/\epsilon)](x,\xi) \, \xtwoscale \,\rho(\xi)^{1/2}\frac{(\varphi(x,z+\xi)-\varphi(x,z))}{\abs{\xi}} \quad \text{in $L^2(\Omega\times\R^3;\R^3)$}.
\end{equation}
We first focus on \eqref{eq:I1} and observe that
\begin{equation}\label{eq:I-norm}
\begin{split}
    \lim_{\veps \to 0} \|\Delta_\epsilon^\rho[m_0]\|^2_{L^2(\Om \times \R^3)} &= \lim_{\veps \to 0} \int_{\R^3} \int_{\Om_{\veps \xi}} \rho(\xi)\frac{|m_0(x+\epsilon\xi)-m_0(x)|^2}{|\veps\xi|^2}  \dl x \dl \xi \\ &= \lim_{\veps \to 0} \frac{1}{\veps^3} \int_{\Om} \int_{\Om} \rho\left(\frac{y-x}{\veps}\right)\frac{|m_0(x)-m_0(y)|^2}{|x - y|^2} \dl x \dl y \\ & = \int_{\Om} \int_{\R^3} \left| \nabla m_0(x)  \frac{\xi}{|\xi|}\right|^2 \rho(\xi) \dl \xi\dl x,
\end{split}
\end{equation}
where we used the change of variable $y = x + \veps \xi$ for fixed $x \in \Om$, and then we applied \cite[Corollary 1.4]{Pon04b}. Hence, it remains to prove the weak $x$-two-scale convergence in \eqref{eq:I1}. For this, we note that by Theorem~\ref{th:2scale}\,\ref{it:2scaleii} and the uniform bound in \eqref{eq:I-norm}, up to a non-relabeled subsequence, we have that $\Delta^\rho_\epsilon[m_0] \wxtwoscale \Delta^\rho[m_0,w]$ for some $w \in L^2(\Omega;L^2_{\#,0}(Q;\R^3))$. It suffices to prove that $w=0$. This can be done by Jensen's inequality and \eqref{eq:I-norm}, similarly to \eqref{eq:sts_jensen}. Indeed, one has the chain of inequalities
    \begin{align*}
        \int_{\Om} \int_{\R^3} &\left| \nabla m_0(x)  \frac{\xi}{|\xi|}\right|^2 \rho(\xi) \dl \xi \dl x\\
        &\leq \int_{\Omega}\int_{\R^3}\int_{Q} \rho(\xi)\left|\nabla m_0(x) \frac{\xi}{|\xi|} + \frac{w(x,z+\xi)-w(x,z)}{\abs{\xi}}\right|^2\dl z \dl \xi \dl x \\
        &\leq \liminf_{\epsilon \to 0} \norm{\Delta^\rho_\epsilon [m_0]}^2_{L^2(\Omega \times \R^3)}=\int_{\Om} \int_{\R^3} \left| \nabla m_0(x)  \frac{\xi}{|\xi|}\right|^2 \rho(\xi) \dl \xi \dl x,
    \end{align*}
    where 
    the second inequality follows from Lemma~\ref{lem:x_norm_lsc}. 
    Since we now obtain an equality in Jensen's inequality, the function $w(x,z+\xi)-w(x,z)$ must vanish identically, which implies $w$ is zero since it must also have average zero in the second variable. Thus, we have that $\Delta^\rho_\epsilon[m_0] \wxtwoscale \Delta^\rho[m_0,0]$, and together with \eqref{eq:I-norm} this proves \eqref{eq:I1}.

    To show \eqref{eq:I2}, we write
    \begin{align*}
        \veps \Delta_\epsilon^\rho[\varphi(\cdot,\cdot/\epsilon)](x,\xi) &= \mathds{1}_{\Omega_{\epsilon\xi}}(x)\rho(\xi)^{1/2}\frac{(\varphi(x,x/\epsilon+\xi)-\varphi(x,x/\epsilon))}{\abs{\xi}} \\
        &\quad+ \mathds{1}_{\Omega_{\epsilon\xi}}(x)\rho(\xi)^{1/2}\frac{(\varphi(x+\epsilon\xi,x/\epsilon+\xi)-\varphi(x,x/\epsilon+\xi))}{\abs{\xi}}.
    \end{align*}
    The second term is easily estimated using the Lipschitz continuity of $\varphi$ as
    \[
    \abs*{\mathds{1}_{\Omega_{\epsilon\xi}}(x)\rho(\xi)^{1/2}\frac{(\varphi(x+\epsilon\xi,x/\epsilon+\xi)-\varphi(x,x/\epsilon+\xi))}{\abs{\xi}}} \leq C\epsilon \rho(\xi)^{1/2} \quad \text{for a.e.~$(x,\xi) \in \Omega \times \R^3$,}
    \]
    and thus, converges strongly to zero in $L^2(\Omega \times \R^3;\R^3)$. For the first term, we use the strong convergence $\mathds{1}_{\Omega_{\epsilon\xi}}(x)\rho(\xi)^{1/2} \to \rho(\xi)^{1/2}$ in $L^2(\Omega\times\R^3)$ together with Lemma~\ref{le:sxts_nocontinuity} and the Lipschitz continuity of $\varphi$, to find that
    \[
    \mathds{1}_{\Omega_{\epsilon\xi}}(x)\rho(\xi)^{1/2}\frac{(\varphi(x,x/\epsilon+\xi)-\varphi(x,x/\epsilon))}{\abs{\xi}} \, \xtwoscale \,\rho(\xi)^{1/2}\frac{(\varphi(x,z+\xi)-\varphi(x,z))}{\abs{\xi}} \quad \text{in $L^2(\Omega\times\R^3;\R^3)$},
    \]
    which establishes \eqref{eq:I2} and finishes Step 1.


\medskip

\textit{Step 2: Constrained sequence.} Due to the definition of $\hat{m}^{\varphi}_\veps$ in \eqref{eq:hatmphi}, we have that $\abs{\hat{m}^{\varphi}_\veps}\geq 1/2$ for $\epsilon >0$ small enough; in the following, we only consider $\epsilon > 0$ such that this latter condition is satisfied. Therefore, if $\tilde{\pi}_{\S^2} \in C^1(\R^3;\R^3)$ is a function such that $\tilde{\pi}_{\S^2}(\theta) = \pi_{\S^2}(\theta)$ for $\abs{\theta} \geq 1/2$, then the projected sequence 
\[
m_\veps^\varphi(x) = \pi_{\S^2}(\hat{m}^{\varphi}_\veps(x))= \tilde{\pi}_{\S^2}(\hat{m}^{\varphi}_\veps(x)) \quad \text{for a.e.~$x \in \Omega$}
\]
is well-defined.
Moreover, by the fundamental theorem of calculus we find for each $\xi \in \R^3$ and $x \in \Omega_{\epsilon \xi}$ that
\begin{equation}\label{mvt1}
    m^{\varphi}_\veps(x + \veps \xi) - m^{\varphi}_\veps(x) = \   (\hat{m}^{\varphi}_\veps(x + \veps \xi) - \hat{m}^{\varphi}_\veps(x)) \cdot \int_0^1 \nabla \tilde{\pi}_{\S^2}(t\hat{m}^{\varphi}_\epsilon(x+\epsilon \xi) +(1-t) \hat{m}^{\varphi}_\epsilon(x))\dd t .
\end{equation}
Due to the continuity of translation in $L^2$, we obtain that
\[
\mathds{1}_{\Omega_{\epsilon \xi}}(x)\hat{m}^{\varphi}_\epsilon(x) \to m_0(x) \quad \text{and} \quad \mathds{1}_{\Omega_{\epsilon \xi}}(x)\hat{m}^{\varphi}_\veps(x + \veps \xi) \to m_0(x) \quad \text{in $L^2(\Omega\times B_R(0);\R^3)$,}
\]
for any $R>0$. Since $\nabla \tilde{\pi}_{\S^2}$ is a continuous and bounded function, we now find by the dominated convergence theorem that
\begin{equation}\label{segment-converge}
\mathds{1}_{\Omega_{\epsilon \xi}}(x)\int_0^1 \nabla \tilde{\pi}_{\S^2}(t\hat{m}^{\varphi}_\epsilon(x+\epsilon \xi) +(1-t) \hat{m}^{\varphi}_\epsilon(x))\dd t \to \nabla\tilde{\pi}_{\S^2}(m_0(x)) = \nabla \pi_{\S^2}(m_0(x))
\end{equation}
in $L^2(\Omega\times B_R(0);\R^{3\times 3})$ for any $R>0$. By \eqref{mvt1},  we find that
\begin{equation*}
\Delta_\epsilon^\rho[m_\veps^{\varphi}](x,\xi) =  \ \Delta_\epsilon^\rho[\hat{m}^{\varphi}_\veps](x,\xi) \cdot \int_0^1 \nabla \tilde{\pi}_{\S^2}(t\hat{m}^{\varphi}_\epsilon(x+\epsilon \xi) +(1-t) \hat{m}^{\varphi}_\epsilon(x))\dd t.
\end{equation*}
Then, due to Step 1, the $L^2$-convergence in \eqref{segment-converge}, and the boundedness of $\nabla \tilde{\pi}_{\S^2}$, we obtain from Lemma~\ref{lem:xts_products}\,\ref{xts_products-iii)} that
\begin{equation*}
   \Delta_\epsilon^\rho[m_\veps] \xtwoscale  \nabla \pi_{\S^2}(m_0)\Delta^\rho[m_0,\varphi] \quad\text{in $L^2(\Omega\times\R^3;\R^3)$}. 
\end{equation*}
If we now also use that $\nabla \pi_{\S^2}(m_0(x))  \nabla m_0(x) = \nabla m_0(x)$ for a.e.~$x \in \Omega$, we deduce that the right-hand side is equal to \(\Delta^\rho[m_0,\nabla \pi_{\S^2}(m_0)\varphi]\), as desired.
\end{proof}

\section{\texorpdfstring{$\Gamma$}{Gamma}-convergence of \texorpdfstring{$\Ecal_\epsilon$}{Fcal + Hcal}}\label{sec:gammalim}

With the preparations from the previous section, we are now ready to prove the $\Gamma$-convergence result for the functionals $(\Ecal_\epsilon)_\epsilon$. 
We restrict $\Ecal_\epsilon$ to functions on the sphere, that is, we consider
\begin{equation}\label{energies-Gammaconv}
\Ecal_\epsilon:L^2(\Omega;\S^2) \to \R_{\infty}, \quad \Ecal_\epsilon(m)=\begin{cases}
    \Fcal_\epsilon(m)+\Hcal_\epsilon(m) &\text{if $\Fcal_\epsilon(m)<+\infty$,}\\
    +\infty &\text{else}.
\end{cases}
\end{equation}
Note that $\Hcal_\epsilon(m)$ is well-defined and finite whenever $\Fcal_\epsilon(m)<+\infty$ in light of Lemma~\ref{lem:antisym_by_sym}. We also introduce for shorter notation the matrix-valued tangent bundle $\mathbf{T}\S^2:=\bigcup_{s \in \S^2}\{s\} \times (T_s\S^2)^3$.

For $(s,A)\in \mathbf{T}\S^2$, we define the density
\begin{align}
\begin{split}\label{eq:fhom}
f_{\rm hom}(s,A) := \inf_{v \in L^2_{\#,0}(Q;T_s \S^2)}\int_{\R^3} \int_{ Q}&a(z,z+\xi)\rho(\xi)\frac{|A\xi +v(z+\xi)-v(z)|^2}{|\xi|^2}\\
+&\kappa(z,z+\xi)\frac{(A\xi+v(z+\xi)-v(z))}{|\xi|} \cdot(s \times \nu(\xi)) \dd \xi\dd z.
\end{split}
\end{align}
We then have the following $\Gamma$-convergence result.
\begin{theorem}[$\Gamma$-convergence] \label{th:Gammalimit}
The sequence $(\Ecal_\epsilon)_\epsilon$ $\Gamma$-converges with respect to the $L^2$-topology as $\epsilon \to 0$ to the functional
\[
\Ecal:L^2(\Omega;\S^2) \to \R_{\infty}, \quad \Ecal(m)=\begin{cases}
    \displaystyle \int_{\Omega}f_{\rm hom}(m,\nabla m)\dd x &\text{if $m \in H^1(\Omega;\S^2)$,}\\
    +\infty &\text{else},
\end{cases}
\]
where the homogenized density $f_{\rm hom}$ is given by \eqref{eq:fhom}. 
\end{theorem}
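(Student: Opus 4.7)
The plan is to establish separately the $\Gamma$-liminf inequality and the existence of a recovery sequence, following the classical two-step procedure and exploiting the two-scale framework built in Sections~\ref{sec:prelim} and \ref{sec:compactness}.

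\textbf{Rewriting and liminf.} First, I would reformulate the energies via the substitution $y=x+\epsilon\xi$ in terms of the nonlocal difference quotients $\Delta_\epsilon^\rho[m_\epsilon]$ defined in \eqref{eq:vepsform}. This directly yields $\Fcal_\epsilon(m_\epsilon)=\int_{\R^3}\int_\Om a(x/\epsilon,x/\epsilon+\xi)\abs{\Delta_\epsilon^\rho[m_\epsilon](x,\xi)}^2\dd x\dd\xi$. For the antisymmetric term, I would use the cross-product identity $m_\epsilon(y)\times m_\epsilon(x)=(m_\epsilon(y)-m_\epsilon(x))\times m_\epsilon(x)$ to rewrite $\Hcal_\epsilon(m_\epsilon)=\int\int \kappa(x/\epsilon,x/\epsilon+\xi)(\nu(\xi)/\rho(\xi)^{1/2})\cdot(\Delta_\epsilon^\rho[m_\epsilon]\times m_\epsilon)\dd x\dd\xi$, which is well-defined by assumption~\ref{H4}. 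Given $m_\epsilon\to m$ in $L^2(\Om;\S^2)$ with $\liminf\Ecal_\epsilon(m_\epsilon)<+\infty$, Theorem~\ref{th:2scale} yields (up to subsequence) $m\in H^1(\Om;\S^2)$ and $w\in L^2(\Om;L^2_{\#,0}(Q;T_m\S^2))$ with $\Delta_\epsilon^\rho[m_\epsilon]\wxtwoscale\Delta^\rho[m,w]$. The oscillating coefficients $a(x/\epsilon,x/\epsilon+\xi)$ and $\kappa(x/\epsilon,x/\epsilon+\xi)\nu(\xi)/\rho(\xi)^{1/2}$ strongly $x$-two-scale converge to $a(z,z+\xi)$ and $\kappa(z,z+\xi)\nu(\xi)/\rho(\xi)^{1/2}$, respectively; since $a,\kappa$ are merely $L^\infty$, one argues via $L^2$-approximation by continuous periodic functions combined with Lemma~\ref{le:sxts_nocontinuity}. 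The antisymmetric term then passes to the limit by the weak-strong product Lemma~\ref{lem:xts_products}\,\ref{xts_products-i)}. For the quadratic symmetric term, I would apply the classical two-scale lsc trick: expand $\int a(\cdot)\abs{\Delta_\epsilon^\rho[m_\epsilon]-\psi(x,\xi,x/\epsilon)}^2\ge 0$ against test functions $\psi$, pass to the limit, and optimize over a dense class of $\psi$. Rearranging via the triple-product identity $\nu\cdot(A\times s)=A\cdot(s\times\nu)$ identifies the resulting lower bound as the cell integrand in \eqref{eq:fhom} evaluated at $v=w(x,\cdot)\in L^2_{\#,0}(Q;T_{m(x)}\S^2)$; minimizing over admissible $v$ gives $\liminf\Ecal_\epsilon(m_\epsilon)\ge\Ecal(m)$.

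\textbf{Recovery sequence.} Given $m\in H^1(\Om;\S^2)$, I would first solve the cell problem defining $f_{\rm hom}(m(x),\nabla m(x))$. Strict convexity together with coercivity from the nonlocal Poincar\'e-type inequality \eqref{eq:nonlocal_poincare} yields a unique minimizer $v^*(x,\cdot)\in L^2_{\#,0}(Q;T_{m(x)}\S^2)$; measurability and integrability of $x\mapsto v^*(x,\cdot)$ follow from this uniqueness together with the explicit characterization provided by Proposition~\ref{prop:min_fhom}. Next, approximate $v^*$ by smooth functions: choose $\varphi_n\in C_c^\infty(\Om;C^\infty_{\#,0}(Q;\R^3))$ such that $\nabla\pi_{\S^2}(m)\varphi_n\to v^*$ in $L^2(\Om;L^2_{\#,0}(Q;\R^3))$, which is possible since $\nabla\pi_{\S^2}(m(x))=\mathrm{Id}-m(x)\otimes m(x)$ is the orthogonal projection onto $T_{m(x)}\S^2$ and $v^*(x,\cdot)$ already takes values there. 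Setting $m_\epsilon^{\varphi_n}(x):=\pi_{\S^2}(m(x)+\epsilon\varphi_n(x,x/\epsilon))$, Lemma~\ref{lem:sts_recov} gives $m_\epsilon^{\varphi_n}\to m$ in $L^2(\Om;\S^2)$ and the strong $x$-two-scale convergence $\Delta_\epsilon^\rho[m_\epsilon^{\varphi_n}]\xtwoscale\Delta^\rho[m,\nabla\pi_{\S^2}(m)\varphi_n]$. Combining this with the strong $x$-two-scale convergence of the oscillating coefficients (Lemma~\ref{lem:xts_products}\,\ref{xts_products-iii)} for the quadratic part and \ref{xts_products-i)} for the linear part) yields $\lim_{\epsilon\to 0}\Ecal_\epsilon(m_\epsilon^{\varphi_n})=\int_\Om\mathcal{I}(x,m,\nabla m,\nabla\pi_{\S^2}(m)\varphi_n)\dd x$, where $\mathcal{I}$ denotes the cell integrand. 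Continuity of $\mathcal{I}$ in its last argument (which is quadratic plus linear) together with $n\to\infty$ gives convergence to $\Ecal(m)$, and a standard diagonal argument in the metrizable $L^2$ topology produces the desired recovery sequence $m_\epsilon:=m_\epsilon^{\varphi_{n(\epsilon)}}$.

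\textbf{Main obstacle.} The principal difficulty lies in the recovery sequence, specifically in handling the pointwise tangent-space constraint $T_{m(x)}\S^2$, which varies non-trivially with $x$ and destroys the linear structure of the admissible class of perturbations. The projection $\nabla\pi_{\S^2}(m)=\mathrm{Id}-m\otimes m$ embedded in Lemma~\ref{lem:sts_recov} resolves this elegantly: it reduces the problem to approximating the unconstrained $\R^3$-valued map $v^*$ by smooth functions and composing with the $x$-dependent tangential projection. Measurability and integrable selection of $v^*$ constitute the other delicate point, handled through the uniqueness of the cell-problem minimizer and Proposition~\ref{prop:min_fhom}. A secondary recurring technicality is the strong $x$-two-scale convergence of the merely $L^\infty$ coefficients $a,\kappa$, which formally lies outside the scope of Lemma~\ref{le:sxts_nocontinuity}; this is circumvented via $L^2$-density approximation by continuous periodic functions, using uniform $L^\infty$ bounds to control the error terms.
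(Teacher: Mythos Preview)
Your liminf argument is essentially the paper's, with a harmless variation: you use the expand-and-optimize trick $\int a\,|\Delta_\epsilon^\rho[m_\epsilon]-\psi|^2\ge 0$ for the quadratic term, whereas the paper multiplies by $a_\epsilon^{1/2}$ and invokes Lemma~\ref{lem:xts_products}\,\ref{xts_products-ii)} together with Lemma~\ref{lem:x_norm_lsc}. Both work. Your worry about the $L^\infty$ coefficients is also unnecessary: the test function $\psi(x,\xi,z)=a(z,z+\xi)^{1/2}$ is \emph{constant} in $x$, hence trivially in $L^2(B_R(0)\times Q;C(\overline{\Omega}))$, so Lemma~\ref{le:sxts_nocontinuity} applies directly without any approximation.

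There is, however, a genuine gap in your recovery sequence. You approximate the cell-problem minimizer $v^*$ by $\nabla\pi_{\S^2}(m)\varphi_n$ in $L^2(\Omega;L^2_{\#,0}(Q;\R^3))$ and then appeal to ``continuity of $\mathcal{I}$ in its last argument''. But the cell integrand depends on $v$ only through the nonlocal differences $v(\cdot,z+\xi)-v(\cdot,z)$, i.e.\ through $S_\rho(v)$, and this operator is \emph{unbounded} on $L^2$. Convergence of $\nabla\pi_{\S^2}(m)\varphi_n\to v^*$ in $L^2$ therefore does \emph{not} imply convergence of $\mathcal{F}(m,\nabla\pi_{\S^2}(m)\varphi_n)$ or $\mathcal{H}(m,\nabla\pi_{\S^2}(m)\varphi_n)$ to the corresponding values at $v^*$; you can easily have $L^2$-small perturbations with arbitrarily large nonlocal energy. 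The paper fixes this by working in the stronger norm $\norm{\cdot}_\rho$ of \eqref{eq:seminorm}: it first checks that the minimizer $w$ automatically lies in $H^\rho_{\#,0}$ (using $\mathcal{E}(m_0)<+\infty$), then invokes the density result Proposition~\ref{prop:density} to approximate in $\norm{\cdot}_\rho$, and finally verifies the elementary continuity estimates \eqref{eq:symm_diff_van}--\eqref{eq:antisymm_diff_van} in that norm. Your outline omits precisely this ingredient; once you replace ``$L^2$'' by ``$\norm{\cdot}_\rho$'' and justify the density accordingly, the argument goes through.
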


\begin{remark}[Different homogenization and localization scales]\label{rmk:diff_scales}
The proof of the {$\Gamma$-}con\-vergence can be directly extended to the case where the scale of the heterogeneities $\epsilon$ and the nonlocal interaction range $\delta_\epsilon$ satisfy $$\delta_\epsilon/\epsilon \to \lambda \in(0,+\infty) \quad \text{as } \epsilon \to 0.$$ More precisely, with
\begin{align*}
    \Fcal_{\epsilon,\delta_\epsilon}(m)&:=\int_\Omega\int_\Omega a\left(\frac{x}{\epsilon},\frac{y}{\epsilon}\right) \frac{1}{\delta_\epsilon^3}\rho\left(\frac{y-x}{\delta_\epsilon}\right) \frac{|m(y)-m(x)|^2}{|y-x|^2}\dd x\dd y, \\
    \Hcal_{\epsilon,\delta_\epsilon}(m)&:=\int_\Omega\int_\Omega \kappa\left(\frac{x}{\epsilon},\frac{y}{\epsilon}\right)  \frac{1}{\delta_\epsilon^3}\nu\left(\frac{y-x}{\delta_\epsilon}\right)\cdot\frac{(m(y)\times m(x))}{|y-x|}\dd x\dd y,
\end{align*}
it follows that $(\Ecal_{\epsilon,\delta_\epsilon})_\epsilon:=(\Fcal_{\epsilon,\delta_\epsilon}+\Hcal_{\epsilon,\delta_\epsilon})_\epsilon$ $\Gamma$-converges with respect to the $L^2$-topology to
\[
\Ecal^{\lambda}(m):=\int_{\Omega} f^{\lambda}_{\rm hom}(m,\nabla m) \dd x \quad \text{for $m \in H^1(\Omega;\S^2)$},
\]
where
\begin{align*}
f^{\lambda}_{\rm hom}(s,A) := \inf_{v \in L^2_{\#,0}(Q;T_s \S^2)}\int_{\R^3}\int_{Q}&a(z,z+\xi)\rho_{\lambda}(\xi)\frac{|A\xi +v(z+\xi)-v(z)|^2}{|\xi|^2}\\
&\qquad+\kappa(z,z+\xi)\frac{(A\xi+v(z+\xi)-v(z))}{|\xi|} \cdot(s \times \nu_{\lambda}(\xi))\dd z \dd \xi,
\end{align*}
with $\rho_\lambda:=\lambda^{-3}\rho(\cdot/\lambda)$ and $\nu_\lambda=\lambda^{-3}\nu(\cdot/\lambda)$. To see this, we use the substitution $y=x+\frac{\delta_\epsilon}{\lambda}\xi$ to find that
\begin{align*}
\Fcal_{\epsilon,\delta_\epsilon}(m)
&=\int_{\R^3} \int_{\Omega} a\left(\frac{x}{\epsilon},\frac{x}{\epsilon}+\frac{\delta_\epsilon}{\epsilon\lambda}\xi\right) \Delta^{\rho_\lambda}_{\delta_\epsilon/\lambda}[m](x,\xi)\dd x\dd \xi
\end{align*}
and
\begin{align*}
\Hcal_{\epsilon,\delta_\epsilon}(m)
&=\int_{\R^3} \int_{\Omega}\left(\kappa\left(\frac{x}{\epsilon},\frac{x}{\epsilon}+\frac{\delta_\epsilon}{\epsilon\lambda}\xi\right)m(x) \times \frac{\nu_\lambda(\xi)}{\rho_\lambda(\xi)^{1/2}}\right)\cdot\Delta^{\rho_\lambda}_{\delta_\epsilon/\lambda}[m](x,\xi) \dd x\dd \xi.
\end{align*}
We recall that the quantity \(\Delta^{\rho_\lambda}_{\delta_\epsilon/\lambda}[m](x,\xi)\) (see \eqref{eq:vepsform}) is given by 
\begin{equation*}
\Delta^{\rho_\lambda}_{\delta_\epsilon/\lambda}[m](x,\xi)
=\1_{\Omega_{\frac{\delta_\epsilon}{\lambda}\xi}}(x)\rho_\lambda(\xi)^{1/2}
\frac{\left(m\left(x+\frac{\delta_\epsilon}{\lambda}\xi\right)-m(x)\right)}{\frac{\delta_\epsilon}{\lambda}|\xi|}.
\end{equation*}
Since $\rho_\lambda$ and $\nu_\lambda$ still satisfy \ref{H2}-\ref{H4} and $\delta_\epsilon/\lambda \to 0$, we can treat the term $\Delta^{\rho_\lambda}_{\delta_\epsilon/\lambda}[m]$ as in the proof of Theorem~\ref{th:Gammalimit} below. The only difference is in the oscillating coefficients, but since $\frac{\delta_\epsilon}{\epsilon\lambda} \to 1$ we still have
\[
a\left(\frac{x}{\epsilon},\frac{x}{\epsilon}+\frac{\delta_\epsilon}{\epsilon\lambda}\xi\right) \xtwoscale a(z,z+\xi) \quad \text{and} \quad \kappa\left(\frac{x}{\epsilon},\frac{x}{\epsilon}+\frac{\delta_\epsilon}{\epsilon\lambda}\xi\right) \xtwoscale \kappa(z,z+\xi).
\]
Thus, the argument follows through in the same manner.
\end{remark}

Before proceeding with the proof of Theorem~\ref{th:Gammalimit}, we present a characterization of the minimizers of \eqref{eq:fhom}, which enables us to exactly identify the homogenized energy density by  solving two separate minimization problems. The argument is inspired by the local case in \cite[Proposition~2.1]{DaD20}. 
\begin{proposition}[Characterization of minimizers]\label{prop:min_fhom}
    Let $(s,A) \in \mathbf{T}\S^2$, then the unique minimizer of the problem \eqref{eq:fhom} is given by
    \begin{equation}\label{eq:vsaformula}
    v_{s,A}:=Av_a+ s \times v_\kappa,
    \end{equation}
    where $v_a,v_\kappa$ are the unique solutions to, respectively,
    \begin{align*}
    &\inf_{v \in L^2_{\#,0}(Q;\R^3)}\int_{\R^3}\int_{Q}a(z,z+\xi)\rho(\xi)\frac{|\xi +v(z+\xi)-v(z)|^2}{|\xi|^2}\dd z\dd \xi,\\
    &\inf_{v \in L^2_{\#,0}(Q;\R^3)}\int_{\R^3}\int_{Q}a(z,z+\xi)\rho(\xi)\frac{|v(z+\xi)-v(z)|^2}{|\xi|^2}+\kappa(z,z+\xi)\nu(\xi)\cdot \frac{(v(z+\xi)-v(z))}{\abs{\xi}} \dd z\dd \xi.
    \end{align*}
    Moreover, we obtain that
    \[
    f_{\rm hom}(s,A) = (A\overline{T}):A+\sum_{i=1}^{3}s\cdot(\bar{d}_i\times Ae_i)-\int_{\R^3}\int_{Q}a(z,z+\xi)\rho(\xi)\frac{\abs{v_{s,A}(z+\xi)-v_{s,A}(z)}^2}{\abs{\xi}^2}\dd z\dd \xi, 
    \]
    with the matrix $\overline{T} \in \R^{3\times3}$ given by 
    \begin{equation}\label{eq:tensorT}
    \overline{T}:=\int_{\R^3}\int_{Q}a(z,z+\xi)\rho(\xi)\left(\frac{\xi}{\abs{\xi}} \otimes \frac{\xi}{\abs{\xi}}\right)\dd z\dd \xi,
    \end{equation}
    and the vectors $\bar{d}_i \in \R^3$ given by 
    \begin{equation}\label{eq:dmivectors}
    \bar{d}_i:=\int_{\R^3}\int_{Q} \kappa(z,z+\xi)\nu(\xi)\frac{\xi_i}{\abs{\xi}}\dd z\dd \xi \quad \text{for $i\in \{1,2,3\}$.}
    \end{equation}
\end{proposition}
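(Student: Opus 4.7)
My plan is to use an Euler--Lagrange characterization together with a decomposition ansatz. First, I would establish that each of the three minimization problems admits a unique solution: each is the minimum of a strictly convex quadratic-plus-linear functional, coercive on $L^2_{\#,0}(Q;\R^3)$ thanks to Assumptions \ref{H1}--\ref{H2} and the nonlocal Poincaré-type inequality \eqref{eq:nonlocal_poincare} of Appendix \ref{app:a}, with the finiteness of the linear part ensured by \ref{H3}--\ref{H4}. A standard Lax--Milgram argument then yields existence, uniqueness, and the associated Euler--Lagrange equations for $v_a$, $v_\kappa$, and for the constrained minimizer in \eqref{eq:fhom}.

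Writing $D_\xi w(z) := w(z+\xi) - w(z)$, the key step is to verify that $v_{s,A} := A v_a + s \times v_\kappa$ satisfies the Euler--Lagrange equation of \eqref{eq:fhom},
\begin{equation*}
\int_{\R^3}\int_Q \Big[2 a(z,z+\xi)\rho(\xi)\frac{(A\xi + D_\xi v_{s,A})\cdot D_\xi\phi}{|\xi|^2} + \kappa(z,z+\xi)\frac{D_\xi\phi \cdot (s\times \nu)}{|\xi|}\Big]\dd z\dd \xi = 0
\end{equation*}
for every $\phi \in L^2_{\#,0}(Q; T_s\S^2)$. Note that $v_{s,A}$ lies in this space because the columns of $A$ lie in $T_s\S^2$ (by $A \in (T_s\S^2)^3$) and $s\times v_\kappa \perp s$. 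Using $D_\xi v_{s,A} = A D_\xi v_a + s\times D_\xi v_\kappa$, the left-hand side splits into a part involving $v_a$ and a part involving $v_\kappa$. The $v_a$-part equals $\int_{\R^3}\int_Q 2a\rho (\xi + D_\xi v_a)\cdot A^T D_\xi\phi/|\xi|^2 \dd z\dd\xi$, which vanishes by the Euler--Lagrange equation for $v_a$ tested against $\tilde{\phi} := A^T\phi \in L^2_{\#,0}(Q;\R^3)$.

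For the $v_\kappa$-part, I would parameterize $\phi = s\times \psi$ with $\psi := -s\times\phi \in L^2_{\#,0}(Q;\R^3)$, which surjectively covers $L^2_{\#,0}(Q;T_s\S^2)$ via the double cross-product identity. Expanding using $(s\times a)\cdot(s\times b) = a\cdot b - (s\cdot a)(s\cdot b)$, the principal contribution cancels by the Euler--Lagrange equation for $v_\kappa$ tested against $\psi$, while the residual involves only the scalars $s\cdot D_\xi\psi$, $s\cdot D_\xi v_\kappa$, and $s\cdot\nu$; it coincides (up to sign) with the scalar Euler--Lagrange equation satisfied by $s\cdot v_\kappa$, obtained by testing the Euler--Lagrange equation for $v_\kappa$ against functions $s g$ with scalar $g \in L^2_{\#,0}(Q;\R)$. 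I expect this reduction to a scalar Euler--Lagrange relation in the $s$-direction to be the main subtlety, since the constrained test space $L^2_{\#,0}(Q;T_s\S^2)$ is strictly smaller than $L^2_{\#,0}(Q;\R^3)$ and the $s$-component must be recovered indirectly.

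To obtain the closed-form expression for $f_{\rm hom}(s,A)=J(v_{s,A})$, I would expand $J(v_{s,A})$ into its constant, linear, and quadratic parts in $v_{s,A}$, and apply the Euler--Lagrange equation with $\phi = v_{s,A}$ to eliminate the linear and mixed terms, leaving the quadratic term with the opposite sign. The constant part then yields $(A\overline{T}):A + \sum_i s\cdot(\bar{d}_i \times A e_i)$ upon using $|A\xi|^2/|\xi|^2 = A^T A:(\xi/|\xi|)\otimes(\xi/|\xi|)$ together with $A^T A:\overline{T} = (A\overline{T}):A$, and $A\xi\cdot(s\times \nu) = \sum_i\xi_i\, s\cdot(\nu\times A e_i)$, in view of the definitions \eqref{eq:tensorT}--\eqref{eq:dmivectors}.
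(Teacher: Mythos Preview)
Your proposal is correct and follows essentially the same route as the paper: existence and uniqueness via coercivity and strict convexity, identification of the minimizer by checking that $Av_a+s\times v_\kappa$ satisfies the Euler--Lagrange equation \eqref{eq:el}, and the closed form for $f_{\rm hom}$ by expanding and testing the Euler--Lagrange equation with $\varphi=v_{s,A}$.

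One remark on your verification of the $v_\kappa$-part: the ``main subtlety'' you anticipate does not actually arise. With your choice $\psi:=-s\times\phi=\phi\times s$, one has $s\cdot\psi\equiv 0$, hence $s\cdot D_\xi\psi\equiv 0$, and the residual terms in your expansion via $(s\times a)\cdot(s\times b)=a\cdot b-(s\cdot a)(s\cdot b)$ vanish identically; the principal part is then exactly the Euler--Lagrange equation for $v_\kappa$ tested against $\psi$. Equivalently, and more directly, the cyclic identity $(s\times D_\xi v_\kappa)\cdot D_\xi\phi = D_\xi v_\kappa\cdot(D_\xi\phi\times s)=D_\xi v_\kappa\cdot D_\xi\psi$ and $D_\xi\phi\cdot(s\times\nu)=\nu\cdot D_\xi\psi$ reduce the remaining integral to the Euler--Lagrange equation for $v_\kappa$ with no residual at all. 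The paper simply asserts that ``one can directly verify'' this step without spelling it out.
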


\begin{remark}[Interpretation of $f_{\rm hom}$]
We note that the first two terms of $f_{\rm hom}$, given by
\[
(s,A) \mapsto (A\overline{T}):A +\sum_{i=1}^{3}s\cdot(\bar{d}_i\times Ae_i),
\]
correspond exactly to the energy density obtained in the pure localization limit from \cite[Theorem~2.1]{DDG24} with the specific choice of kernels 
\[
\epsilon^{-3}\bar{\rho}_a\left(\frac{\cdot}{\epsilon}\right) \quad \text{and} \quad \epsilon^{-3}\bar{\nu}_{\kappa}\left(\frac{\cdot}{\epsilon}\right) \quad \text{for $\epsilon >0$,}
\]
where
\[
\bar{\rho}_a(\xi):= \int_{Q}a(z,z+\xi)\rho(\xi)\dd z \quad \text{and} \quad \bar{\nu}_{\kappa}(\xi):=\int_{Q}\kappa(z,z+\xi)\nu(\xi)\dd z.
\]
Indeed, the measure $\mu \in \Mcal(\S^2)$ in \cite[Theorem~2.1]{DDG24} in this specific case can be explicitly computed to be
\[
\mu(B):=\int_{B} \int_0^\infty \bar{\rho}_a(t\sigma)t^{2}\dd t \dd \Hcal^2(\sigma) \quad \text{for a Borel set $B \subset \S^2$,}
\]
cf.~\cite[Example~2]{Pon04b}, which yields
\[
\int_{\S^2} \abs{A \sigma}^2\dd \mu(\sigma) =  \int_{\R^3}\int_{Q}a(z,z+\xi)\rho(\xi)\frac{\abs{A\xi}^2}{\abs{\xi}^2}\dd z\dd \xi = (A\overline{T}):A.
\]
Intuitively, this means we average out the contributions of the periodic coefficients $a$ and $\kappa$, and then simply take the pointwise limit of the energies. On the other hand, the third term of $f_{\rm hom}$ given by
\[
(s,A) \mapsto -\int_{\R^3}\int_{Q}a(z,z+\xi)\rho(\xi)\frac{\abs{v_{s,A}(z+\xi)-v_{s,A}(z)}^2}{\abs{\xi}^2}\dd z\dd \xi
\]
is non-positive and accounts for the homogenization effect. Indeed, it represents how much energy we can save by minimizing over each periodic cell.
\end{remark}

\begin{proof}[Proof of Proposition \ref{prop:min_fhom}]
    Consider the functional $\Fcal_{s,A}:L^2_{\#,0}(Q;T_s\S^2) \to \R_{\infty}$ given by
    \begin{align*}
    \begin{split}
    \Fcal_{s,A}(v):=\int_{\R^3}\int_{Q}&a(z,z+\xi)\rho(\xi)\frac{|A\xi +v(z+\xi)-v(z)|^2}{|\xi|^2}\\
    +&\kappa(z,z+\xi)\frac{(A\xi+v(z+\xi)-v(z))}{|\xi|} \cdot(s \times \nu(\xi)) \dd z\dd \xi.
    \end{split}
    \end{align*}
    We first prove that it has a unique minimizer. For the coercivity, we use Young's inequality to obtain that
    \begin{align*}
        &\int_{\R^3} \int_{Q} \abs*{\kappa(z,z+\xi)\nu(\xi) \cdot \left(\frac{(A\xi+v(z+\xi)-v(z))}{\abs{\xi}}\times s\right)} \dd z\dd \xi \\ = & \int_{\R^3} \int_{Q} \abs*{ a^{1/2}(z, z + \xi) \rho(\xi)^{1/2} \frac{(A\xi+v(z+\xi)-v(z))}{\abs{\xi}}  \cdot \left( \frac{\kappa(z,z+\xi)\nu(\xi)}{a^{1/2}(z, z + \xi) \rho(\xi)^{1/2}} \times s\right) } \dd z\dd \xi \\ \leq & \, \frac{1}{2} \int_{\R^3}\int_{Q}a(z,z+\xi)\rho(\xi)\frac{|A\xi +v(z+\xi)-v(z)|^2}{|\xi|^2} \dd z\dd \xi  +  \frac{1}{2 a_0} \norm*{\kappa}^2_{L^\infty(\R^3\times \R^3)}\norm*{\frac{\nu}{\rho^{1/2}}}^{2}_{L^{2}(\R^3)},
\end{align*}
where we remind that \(a\ge a_0>0\), and where we used Assumption \ref{H4}.

    
    Hence, 
    \begin{equation}\label{eq:fsalowerbound}
    \Fcal_{s,A}(v) \geq \frac{a_0}{2} \int_{\R^3}\int_{Q}\rho(\xi)\frac{|A\xi +v(z+\xi)-v(z)|^2}{|\xi|^2}\dd z\dd \xi - C, 
    \end{equation}
    with 
    \begin{equation*}
        C:= \frac{1}{2 a_0} \norm*{\kappa}^2_{L^\infty(\R^3\times \R^3)}\norm*{\frac{\nu}{\rho^{1/2}}}^{2}_{L^{2}(\R^3)}.
    \end{equation*}
    Due to the lower control on $\rho$ from \ref{H2}, for the right-hand side of  \eqref{eq:fsalowerbound} we rely on a nonlocal Poincar\'{e}-Wirtinger-type inequality, in a similar way to inequality \eqref{eq:nonlocal_poincare} in Appendix~\ref{app:a}, to infer that $\Fcal_{s,A}$ is weakly coercive in $L^2_{\#,0}(Q;T_s\S^2)$. Together with the weak lower semicontinuity of $\Fcal_{s,A}$ (see~e.g.~\cite[Theorem~6.54]{FoL07}), we find that $\Fcal_{s,A}$ admits a minimizer by the direct method in the calculus of variations. Moreover, due to the strict convexity of $\Fcal_{s,A}$, we have that the minimizer is unique and we denote it by $v_{s,A}$. 
    
    In order to obtain \eqref{eq:vsaformula}, we observe that the convexity of $\Fcal_{s,A}$ implies that $v_{s,A}$ is characterized by the weak Euler-Lagrange equation 
    \begin{align}
    \begin{split}\label{eq:el}
        \int_{\R^3}\int_{Q}2 \,a(z,z+\xi)\rho(\xi)&\frac{A\xi+v_{s,A}(z+\xi)-v_{s,A}(z)}{\abs{\xi}}\cdot \frac{(\varphi(z+\xi)-\varphi(z))}{\abs{\xi}}\\
        +&\kappa(z,z+\xi)\nu(\xi)\cdot \left(\frac{(\varphi(z+\xi)-\varphi(z))}{\abs{\xi}} \times s\right)\dd z\dd \xi=0
        \end{split}
    \end{align}
    for all $\varphi \in C_{\#}^{\infty}(Q;T_s\S^2)$. On the other hand, it can be verified that $Av_a+ s \times v_\kappa$ lies in $L^2_{\#,0}(Q;T_s\S^2)$ given that the columns of $A$ lie in the tangent space $T_s\S^2$ and $s$ is orthogonal to $T_s\S^2$. Additionally, writing down the weak Euler-Lagrange equations for $v_a$ and $v_\kappa$, respectively, one can directly verify that $Av_a+s \times v_\kappa$ satisfies \eqref{eq:el}. Hence, we conclude that \eqref{eq:vsaformula} indeed holds. Finally, to obtain the formula for $f_{\rm hom}$, we can compute
    \begin{align*}
        f_{\rm hom}(s,A)= \int_{\R^3}\int_{Q}&a(z,z+\xi)\rho(\xi)\frac{|A\xi +v_{s,A}(z+\xi)-v_{s,A}(z)|^2}{|\xi|^2}\\
+&\kappa(z,z+\xi)\frac{(A\xi+v_{s,A}(z+\xi)-v_{s,A}(z))}{|\xi|} \cdot(s \times \nu(\xi)) \dd z\dd \xi\\
= \int_{\R^3}\int_{Q}&a(z,z+\xi)\rho(\xi)\frac{|A\xi|^2 +2A\xi \cdot (v_{s,A}(z+\xi)-v_{s,A}(z))+|v_{s,A}(z+\xi)-v_{s,A}(z)|^2}{|\xi|^2}\\
+&\kappa(z,z+\xi)\frac{(A\xi+v_{s,A}(z+\xi)-v_{s,A}(z))}{|\xi|} \cdot(s \times \nu(\xi)) \dd z\dd \xi\\
= \int_{\R^3}\int_{Q}&a(z,z+\xi)\rho(\xi)\frac{|A\xi|^2 -|v_{s,A}(z+\xi)-v_{s,A}(z)|^2}{|\xi|^2}\\
+&\kappa(z,z+\xi)\frac{A\xi}{|\xi|} \cdot(s \times \nu(\xi))\dd z\dd \xi,
    \end{align*}
    where we have used \eqref{eq:el} with $\varphi = v_{s,A}$ in the third equality.  Using that $\abs{A\xi}^2=(A(\xi\otimes\xi)):A$, we find exactly that 
    \[
    \int_{\R^3}\int_{Q}a(z,z+\xi)\rho(\xi)\frac{\abs{A\xi}^2}{\abs{\xi}^2}\dd z\dd \xi = (A\overline{T}):A
    \]
    with $\overline{T}$ as in \eqref{eq:tensorT}, while a direct computation shows that
    \begin{align*}
        \int_{\R^3}\int_{Q}\kappa(z,z+\xi)\frac{A\xi}{|\xi|} \cdot(s \times \nu(\xi))\dd z\dd \xi &= \int_{\R^3}\int_{Q} \kappa(z,z+\xi) \,s\cdot\left(\nu(\xi) \times \frac{A\xi}{\abs{\xi}}\right) \dd z\dd \xi\\
        &=\sum_{i=1}^{3} \int_{\R^3}\int_{Q} \kappa(z,z+\xi)\, s\cdot\left(\nu(\xi)\frac{\xi_i}{\abs{\xi}} \times Ae_i\right) \dd z\dd \xi\\
        &=\sum_{i=1}^{3}s\cdot(\bar{d}_i\times Ae_i),
    \end{align*}
    with the vectors $\bar{d}_i$ as in \eqref{eq:dmivectors}.
\end{proof}

The preceding Proposition~\ref{prop:min_fhom} also implies an alternative formulation of the \(\Gamma\)-limit in Theorem~\ref{th:Gammalimit}, where the infimum is taken over functions in \(L^2(\Om;L^2_{\#,0}(Q,T_{m}\S^2))\). This reformulation will be particularly useful in the construction of the recovery sequence in Theorem~\ref{th:Gammalimit}, since it allows to circumvent the application of an abstract selection criterion. Before stating the result, we introduce the formal notations
\begin{align}
    \mathcal{F}(m,w)
    &\coloneqq \int_{\Om}\int_{\R^3}\int_Q a(z,z+\xi)\rho(\xi)\frac{\abs{\nabla m(x)\xi+ w(x,z+\xi)-w(x,z)}^2}{\abs{\xi}^2}
    \dl z\dl \xi\dl x,   \label{F_2} \\
    \mathcal{H}(m,w)
    &\coloneqq \int_{\Om}\int_{\R^3}\int_Q \kappa(z,z+\xi)\frac{(\nabla m(x)\xi+w(x,z+\xi)-w(x,z))}{|\xi|}\cdot(m(x)\times \nu(\xi)) \dd z \dl \xi\dl x. \label{H_2}
\end{align}

\begin{lemma}\label{lem:Glim_refor}
    With the definition of \(f_{\rm hom}\) in \eqref{eq:fhom} there holds for all \(m \in H^1(\Omega;\S^2)\) the equality
    \begin{align}\label{eq:Glim_refor}
    \begin{split}
        \Ecal(m)=\int_{\Omega}f_{\rm hom}(m,\nabla m)\dd x
        = \min_{w\in  L^2(\Om;L^2_{\#,0}(Q,T_{m}\S^2))}
        \left(\mathcal{F}(m,w) + \mathcal{H}(m,w)\right).
    \end{split}
    \end{align}
\end{lemma}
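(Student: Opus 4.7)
The plan is to prove the two inequalities separately, crucially exploiting the explicit linear characterization of the cell-problem minimizer provided by Proposition~\ref{prop:min_fhom}. For the inequality $\leq$, I would argue that any admissible $w \in L^2(\Om;L^2_{\#,0}(Q,T_m\S^2))$ produces, for a.e.~$x \in \Omega$, a slice $w(x,\cdot)$ lying in $L^2_{\#,0}(Q;T_{m(x)}\S^2)$, which is therefore a competitor in the pointwise cell problem \eqref{eq:fhom} at $(s,A) = (m(x),\nabla m(x))$. This gives the pointwise bound $f_{\rm hom}(m(x),\nabla m(x)) \leq \mathcal{I}(x,w(x,\cdot))$, where $\mathcal{I}(x,\cdot)$ denotes the integral in \eqref{eq:fhom} with $(s,A)$ replaced by $(m(x),\nabla m(x))$. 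Integrating over $\Omega$ and applying Fubini's theorem to identify $\int_\Omega \mathcal{I}(x,w(x,\cdot))\,dx = \mathcal{F}(m,w) + \mathcal{H}(m,w)$ (see \eqref{F_2}--\eqref{H_2}), and then taking the infimum over $w$, settles one direction.

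For the converse inequality and the attainment of the minimum, I would invoke Proposition~\ref{prop:min_fhom}, which supplies the explicit minimizer $v_{s,A} = A v_a + s \times v_\kappa$ with $v_a, v_\kappa \in L^2_{\#,0}(Q;\R^3)$ two fixed functions independent of $(s,A)$. This suggests defining the candidate
\[
w^{*}(x,z) \coloneqq \nabla m(x)\, v_a(z) + m(x)\times v_\kappa(z),
\]
and then verifying that $w^{*} \in L^2(\Om;L^2_{\#,0}(Q,T_m\S^2))$. Measurability in $(x,z)$ follows from that of $m$ and $\nabla m$; the pointwise estimate $\norm{w^{*}(x,\cdot)}_{L^2(Q;\R^3)} \leq C(\abs{\nabla m(x)}+1)$ combined with $m \in H^1(\Om;\S^2)$ yields the $L^2$-integrability; the zero-mean property in $z$ is inherited from $v_a$ and $v_\kappa$; and the tangent condition holds since the columns of $\nabla m(x)$ lie in $T_{m(x)}\S^2$ (as $|m|\equiv 1$ forces $(\nabla m)^T m \equiv 0$) while $m(x)\times v_\kappa(z)$ is automatically orthogonal to $m(x)$.

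With these properties in hand, Proposition~\ref{prop:min_fhom} gives the pointwise identity $f_{\rm hom}(m(x),\nabla m(x)) = \mathcal{I}(x,w^{*}(x,\cdot))$ for a.e.~$x$, so a further application of Fubini produces $\int_\Omega f_{\rm hom}(m,\nabla m)\,dx = \mathcal{F}(m,w^{*}) + \mathcal{H}(m,w^{*})$. Combined with the first inequality, this establishes equality in \eqref{eq:Glim_refor} and shows that the infimum is attained at $w^{*}$. The only step requiring real care is the measurable selection of a cell-problem minimizer throughout $\Omega$: in the absence of Proposition~\ref{prop:min_fhom} one would need to combine a Castaing-type measurable-selection theorem with a separate argument enforcing the manifold constraint $w(x,\cdot)\in T_{m(x)}\S^2$, but the explicit linear-in-$(s,A)$ structure $v_{s,A} = A v_a + s\times v_\kappa$ circumvents both difficulties simultaneously.
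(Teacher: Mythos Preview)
Your proposal is correct and follows essentially the same approach as the paper: the inequality $\leq$ is obtained by observing that each slice $w(x,\cdot)$ is a competitor in the pointwise cell problem, and the reverse inequality (together with attainment) comes from the explicit minimizer $w^{*}(x,z)=\nabla m(x)\,v_a(z)+m(x)\times v_\kappa(z)$ furnished by Proposition~\ref{prop:min_fhom}. The paper's proof is simply more terse, omitting the verifications of measurability, integrability, and the tangent condition that you spell out.
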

\begin{proof}
   The fact that the left-hand side of \eqref{eq:Glim_refor} is smaller than the right-hand side follows straightforwardly from the definition of \(f_{\rm hom}\) in \eqref{eq:fhom}, since every \(w\in L^2(\Om;L^2_{\#,0}(Q,T_{m}\S^2))\) fulfills \(w(x,\cdot)\in L^2_{\#,0}(Q;T_{m(x)} \S^2)\) for a.e.~\(x\in \Om\). 

   The reverse inequality is a direct consequence of Proposition~\ref{prop:min_fhom}. Indeed, the function $$ w(x,z)\coloneqq \nabla m(x) v_a(z) +m(x)\times v_\kappa(z) $$ minimizes the right-hand side of \eqref{eq:fhom} for every \((s,A)=(m(x),\nabla m(x))\), and simultaneously belongs to \(L^2(\Om;L^2_{\#,0}(Q;T_m\S^2))\), since \(m\in H^1(\Om;\S^2)\).
\end{proof}
We are now ready to prove the $\Gamma$-convergence result.
\begin{proof}[Proof~of~Theorem~\ref{th:Gammalimit}]
We split the proof into two parts. \smallskip

\textit{Liminf-inequality}. Let $(m_\epsilon)_\epsilon \subset L^2(\Omega;\S^2)$ be a sequence converging to $m_0 \in L^2(\Omega;\S^2)$ and suppose without loss of generality that
\[
\liminf_{\epsilon \to 0} \Ecal_\epsilon(m_\epsilon) < +\infty.
\]
By Theorem~\ref{th:2scale}, we find that $m_0 \in H^1(\Omega;\S^2)$ and there exists a $w \in L^2(\Omega;L^2_{\#,0}(Q;T_{m_0}\S^2))$ such that, up to a non-relabeled subsequence, $\Delta^\rho_\epsilon [m_\epsilon] \wxtwoscale \Delta^\rho[m_0,w]$ in $L^2(\Omega\times\R^3;\R^3)$, cf.~\eqref{eq:vform}. Moreover, taking $a_\epsilon(x,\xi):=a(x/\epsilon,x/\epsilon+\xi)$, by Lemma~\ref{le:sxts_nocontinuity} we find that $a_\epsilon^{1/2}(x,\xi) \xtwoscale a(z,z+\xi)^{1/2}$ in $L^2(\Omega\times B_R(0))$ for any $R>0$. 
From Lemma~\ref{lem:xts_products}\,\ref{xts_products-ii)}, we now find that 
\[
a_\epsilon^{1/2}(x,\xi)\Delta^\rho_\epsilon [m_\epsilon](x,\xi) \wxtwoscale a(z,z+\xi)^{1/2}\rho(\xi)^{1/2}\left(\nabla m_0(x)\frac{\xi}{|\xi|}+\frac{w(x,z+\xi)-w(x,z)}{|\xi|}\right)
\]
in $L^2(\Omega\times\R^3;\R^3)$. Using Lemma~\ref{lem:x_norm_lsc}, we arrive at
\begin{align}
\begin{split}\label{eq:Fliminf}
\liminf_{\epsilon \to 0} \Fcal_\epsilon(m_\epsilon) &= \liminf_{\epsilon \to 0} \int_{\Omega}\int_{\R^3} \abs{a_\epsilon(x,\xi)^{1/2}\Delta^\rho_\epsilon [m_\epsilon](x,\xi)}^2\dd \xi\dd x \\
&\geq \int_{\Omega}\int_{\R^3}\int_{Q} a(z,z+\xi)\rho(\xi)\frac{\abs{\nabla m_0(x)\xi+w(x,z+\xi)-w(x,z)}^2}{|\xi|^2} \dd z \dd \xi \dd x\\
&= \Fcal(m_0,w).
\end{split}
\end{align}

For the antisymmetric term, we write
\begin{align*}
    &\1_{\Omega_{\epsilon\xi}}(x)\kappa\left(\frac{x}{\epsilon},\frac{x}{\epsilon}+\xi\right)\nu(\xi)\cdot\frac{(m_\epsilon(x+\epsilon\xi)\times m_\epsilon(x))}{\epsilon|\xi|}\\
   &\quad=\Delta^\rho_\epsilon[m_\epsilon](x,\xi)\cdot \left(\kappa\left(\frac{x}{\epsilon},\frac{x}{\epsilon}+\xi\right)m_\epsilon(x) \times \frac{\nu(\xi)}{\rho(\xi)^{1/2}}\right).
\end{align*}
In light of \ref{H4}, Lemma~\ref{le:sxts_nocontinuity} and Lemma~\ref{lem:xts_products}\,\ref{xts_products-iii)}, we find that 
\[
\kappa\left(\frac{x}{\epsilon},\frac{x}{\epsilon}+\xi\right)m_\epsilon(x) \times \frac{\nu(\xi)}{\rho(\xi)^{1/2}}\xtwoscale
\kappa(z,z+\xi) \,m_0(x) \times \frac{\nu(\xi)}{\rho(\xi)^{1/2}} \quad \text{in $L^2(\Omega\times\R^3;\R^3)$}.
\]
 Together with the convergence $\Delta^\rho_\epsilon [m_\epsilon] \wxtwoscale \Delta^\rho[m_0,w]$, we deduce from Lemma~\ref{lem:xts_products}\,\ref{xts_products-i)} that
\begin{align}
\begin{split}\label{eq:Hlim}
    &\lim_{\epsilon \to 0}\Hcal_\epsilon(m_\epsilon) \\
    &\quad= \lim_{\epsilon \to 0} \int_{\Omega}\int_{\R^3} \1_{\Omega_{\epsilon\xi}}(x)\kappa\left(\frac{x}{\epsilon},\frac{x}{\epsilon}+\xi\right)\nu(\xi)\cdot\frac{(m_\epsilon(x+\epsilon\xi)\times m_\epsilon(x))}{\epsilon|\xi|}\dd \xi\dd x \\
    &\quad=\int_{\Omega}\int_{\R^3}\int_{Q} \kappa(z,z+\xi)\frac{\nabla m_0(x)\xi+w(x,z+\xi)-w(x,z)}{|\xi|}\cdot(m_0(x)\times \nu(\xi)) \dd z \dd \xi \dd x\\
    &\quad= \Hcal(m_0,w).
\end{split}
\end{align}
Adding \eqref{eq:Fliminf} and \eqref{eq:Hlim} together and using Lemma~\ref{lem:Glim_refor}, we deduce that
\[
\liminf_{\epsilon \to 0} \Ecal_\epsilon(m_\epsilon) 
\geq 
    \mathcal{F}(m_0,w) + \mathcal{H}(m_0,w)
\geq\Ecal(m_0).
\]
\smallskip

\textit{Recovery sequence}. If $m_0 \in L^2(\Omega;\S^2) \setminus H^1(\Omega;\S^2)$, then $\mathcal{E}(m_0)= \infty$ and there is nothing to prove. 
Then, let $m_0 \in H^1(\Omega;\S^2)$ and assume that \(\mathcal{E}(m_0)<+ \infty\). 
We rely on Lemma~\ref{lem:sts_recov}. In fact, for any \(\varphi \in C_c^\infty(\Om; C^\infty_{\#,0}(Q;\R^3))\), considering the sequence $m_\veps^\varphi = \pi_{\S^2}(m_0+\veps \varphi(x,x/\veps))$ for every $\veps > 0$, we conclude from Lemma~\ref{lem:sts_recov} and Lemma~\ref{lem:xts_products}\,\ref{xts_products-iii)} that 
\begin{align}
    \begin{split}\label{eq:Flim_recov}
    &\lim_{\epsilon \to 0} \Fcal_\epsilon(m_\epsilon^\varphi) \\
    & = \lim_{\epsilon \to 0} \int_{\R^3} \int_\Om\abs{a_\epsilon(x,\xi)^{1/2}\Delta^\rho_\epsilon [m_\epsilon^\varphi](x,\xi)}^2\dd x\dd \xi\\
    &  =  \int_{\R^3}\int_\Om\int_Q a(z,z+\xi)\rho(\xi)\frac{\abs{\nabla m_0(x)\xi+ \nabla \pi_{\S^2}(m_0(x))\left(\varphi(x,z+\xi)-\varphi(x,z)\right)}^2}{\abs{\xi}^2}
    \dl z\dl x\dl \xi\\
    & = \mathcal{F}(m_0,\nabla \pi_{\S^2}(m_0)\varphi).
    \end{split}
\end{align}
Moreover, similar to \eqref{eq:Hlim} we also have that
\begin{align}
    &\lim_{\epsilon \to 0}\Hcal_\epsilon(m_\epsilon^\varphi)\nonumber \\
    &= \lim_{\epsilon \to 0} \int_{\R^3}\int_{\Om} \1_{\Omega_{\epsilon\xi}}(x)\kappa\left(\frac{x}{\epsilon},\frac{x}{\epsilon}+\xi\right)\nu(\xi)\cdot\frac{(m_\epsilon^\varphi(x+\epsilon\xi)\times m_\epsilon^\varphi(x))}{\epsilon|\xi|}\dd x\dd \xi\label{eq:Hlim_recov} \\
    &=\int_{\R^3}\int_\Om\int_Q \kappa(z,z+\xi)\frac{\nabla m_0(x)\xi+\nabla \pi_{\S^2}(m_0(x))\left(\varphi(x,z+\xi)-\varphi(x,z)\right)}{|\xi|}\cdot(m_0(x)\times \nu(\xi)) \dd z \dd x \dd \xi\nonumber\\
    &= \mathcal{H}(m_0,\nabla \pi_{\S^2}(m_0)\varphi).\nonumber
\end{align}
The claim now essentially follows from Lemma~\ref{lem:Glim_refor} and by exploiting that, according to Proposition~\ref{prop:density}, the space \(C_c^\infty(\Om; C^\infty_{\#,0}(Q;\R^3))\) is dense in \(H^\rho_{\#,0}\) with respect to the norm \(\norm{\cdot}_\rho\) defined in \eqref{eq:seminorm}. We make this precise in the following. Taking \(w\in L^2(\Om;L^2_{\#,0}(Q,T_{m}\S^2))\) that minimizes the right-hand side of \eqref{eq:Glim_refor}, we first demonstrate that \(w\in H^\rho_{\#,0}\). Indeed, because \(\inner{w}_{Q}=0\), there holds 
\begin{align*}
    \norm{w}_\rho^2
    &\le 
    C\int_{\R^3}\int_\Om\int_Qa(z,z+\xi)\rho(\xi)\frac{|\nabla m_0(x)\xi +w(x,z+\xi)-w(x,z)|^2}{|\xi|^2}{\dd z \dd x \dd \xi}\\
    &\quad+ C\int_{\R^3}\int_\Om\int_Qa(z,z+\xi)\rho(\xi)|\nabla m_0(x)|^2{\dd z \dd x \dd \xi}\\
    &\le C \mathcal{F}(m_0,w) + C\norm{\nabla m_0}^2_{L^2(\Om)},
\end{align*}
with $\mathcal{F}(m_0,w)$ as defined in \eqref{F_2}. 
Additionally, with similar computations as in Lemma~\ref{lem:antisym_by_sym}, we have
\begin{align*}
    \mathcal{H}(m_0,w)
    \ge -\frac{1}{2} \mathcal{F}(m_0,w) - C\norm{m_0}^2_{L^2(\Om)},
\end{align*}
with $ \mathcal{H}(m_0,w)$ as in \eqref{H_2}, from which we conclude, adding \(\mathcal{F}(m_0,w)\) to both sides, that
\begin{align*}
    \mathcal{F}(m_0,w)
    \le 2\mathcal{E}(m_0) + C\norm{m_0}^2_{L^2(\Om)}.
\end{align*}
Overall, from the assumption that \(\mathcal{E}(m_0)<+ \infty\), it follows 
\begin{equation*}
    \norm{w}_\rho
    \le C\mathcal{E}(m_0) + C\norm{m_0}^2_{H^1(\Om)}
    <+ \infty.
\end{equation*}
Hence, by the density result in Proposition~\ref{prop:density}, there exist functions \(\seq{\varphi}\subset C_c^\infty(\Om; C^\infty_{\#,0}(Q;\R^3))\) such that \(\norm{w-\varphi_k}_\rho\to 0\) as $k \to \infty$. 
Using that $w(x,\cdot) \in T_{m_0(x)}\S^2$ and that \(\nabla \pi_{\S^2}(m_0(x))\) is the orthogonal projection onto \(T_{m_0(x)}{\S^2}\) for a.e.~$x \in \Omega$, we infer that also
\[
 \norm{w-\nabla\pi_{\S^2}(m_0)\varphi_k}_\rho\to 0 \quad \text{as $k \to \infty$}.
\]
Then, because \(a \in L^{\infty}(\R^3 \times \R^3)\) by hypothesis \ref{H1}, we find with the reverse triangle inequality that
 \begin{align}\label{eq:symm_diff_van}
    |\mathcal{F}(m_0,w)&^{1/2}-\mathcal{F}(m_0,\nabla\pi_{\S^2}(m_0)\varphi_k)^{1/2}|
    \le \norm{a}_{L^{\infty}(\R^3 \times \R^3)}
    \norm{w-\nabla\pi_{\S^2}(m_0)\varphi_k}_\rho
    \to 0.
 \end{align}
 To estimate the difference of the antisymmetric contributions, we observe that \begin{align}
     &|\mathcal{H}(m_0,w)-\mathcal{H}(m_0,\nabla \pi_{\S^2}(m_0)\varphi_k)|\nonumber \\ 
     & \leq \norm{\kappa}_{L^{\infty}(\R^3 \times \R^3)}\int_{\R^3}\int_\Om\int_Q  |\nu(\xi)|\nonumber  \\
     &\qquad\qquad\qquad\qquad\qquad\frac{|(w(x,z+\xi)-w(x,z))-\nabla \pi_{\S^2}(m_0(x))\left(\varphi_k(x,z+\xi)-\varphi_k(x,z)\right)|}{|\xi|} \dd z \dd x\dd \xi \nonumber \\ 
     & \leq \abs{\Omega} \norm{\kappa}_{L^{\infty}(\R^3 \times \R^3)} \norm*{\frac{\nu}{\rho^{1/2}}}_{L^{2}(\R^3)}^2 \norm{w-\nabla \pi_{\S^2}(m_0)\varphi_k}_\rho^2 \label{eq:antisymm_diff_van} \\ 
     & \leq C \norm{w-\nabla \pi_{\S^2}(m_0)\varphi_k}_\rho^2 \to 0,\nonumber
 \end{align}
 where in the first inequality we used that $|m_0| = 1$ a.e.\ in $\Om$, while in the second inequality we multiplied and divided by the quantity $\rho(\xi)^{1/2}$, and then used Cauchy-Schwarz inequality. Moreover, in the last inequality we exploited that $\kappa \in L^{\infty}_{\#}(Q\times Q) $ and Assumption \ref{H4}.
 
 We now conclude from \eqref{eq:Flim_recov}--
 \eqref{eq:antisymm_diff_van} and Lemma~\ref{lem:Glim_refor} that for all \(\delta>0\) there exists a \(\varphi\in C_c^\infty(\Om; C^\infty_{\#,0}(Q;\R^3))\) such that the approximate \(\limsup\)-inequality
 \begin{equation*}
     \limsup_{\epsilon \to 0} \Ecal_\epsilon(m_\epsilon^\varphi) 
    = \mathcal{F}(m_0,\nabla \pi_{\S^2}(m_0)\varphi) + \mathcal{H}(m_0,\nabla \pi_{\S^2}(m_0)\varphi)
    \le\Ecal(m_0)+\delta
 \end{equation*}
 holds.
 By standard properties of \(\Gamma\)-convergence (cf. \cite[Section~1.2]{Br02}), this concludes the proof.
 \end{proof}

\section*{Acknowledgements}
\noindent 
This research was funded in whole
by the Austrian Science Fund (FWF) projects \href{https://doi.org/10.55776/F65}{10.55776/F65}, \href{https://doi.org/10.55776/Y1292}{10.55776/Y1292}, and \href{https://doi.org/10.55776/P34609}{10.55776/P34609}. For open-access purposes, the authors have applied a CC BY public copyright license to any author-accepted manuscript version arising from this
submission. \\
L.H. also acknowledges support from Grant PCI2024-155023-2 funded by the European Union and MCIN/AEI/10.13039/501100011033. Most of the research was done while H.S. was affiliated with TU Wien, and H.S. also acknowledges support from the Belgian Research Fund F.R.S.-FNRS through the ZESTE project during the finalizing stages of the paper. R.G. and L.H. thank TU Wien and MedUni Wien for their support and hospitality. 


\appendix
\section{Auxiliary computations}\label{app:a}
We recall the notation
\begin{align*}
    \Fcal_\epsilon(m)&=\int_{\R^3 } \int_{\Omega_{\veps \xi}} a\left(\frac{x}{\epsilon},\frac{x}{\epsilon} + \xi \right) \rho(\xi)  \frac{|m(x + \veps \xi)-m(x)|^2}{|\veps \xi|^2}\dd x\dd \xi, \\
    \Hcal_\epsilon(m)&=\int_{\R^3 } \int_{\Omega_{\veps \xi}}\kappa \left(\frac{x}{\epsilon},\frac{x}{\epsilon} + \xi \right) \nu(\xi) \cdot\frac{(m(x + \veps \xi)\times m(x))}{|\veps \xi|}\dd x\dd \xi.
\end{align*}

\begin{lemma}\label{lem:antisym_by_sym}
    Under hypotheses \ref{H1}--\ref{H4}, for every $m \in L^2(\Om; \R^3)$ there exists a constant \(C = C (a,\kappa, \nu, \rho) >0\) such that
    \begin{align*}
        -\frac{1}{2}\Fcal_\epsilon(m)
        -C\norm{m}_{L^2(\Omega)}^2
        \le\Hcal_\epsilon(m) 
        \le \frac{1}{2}\Fcal_\epsilon(m)
        +C\norm{m}_{L^2(\Omega)}^2.
    \end{align*}
\end{lemma}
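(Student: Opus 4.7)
The plan is to exploit the antisymmetry of the cross product together with a Cauchy--Schwarz/Young-type splitting that isolates a symmetric-energy contribution. The starting observation is that $m(x)\times m(x) = 0$ for a.e.\ $x\in\Om$, so
\begin{equation*}
    m(x+\veps\xi)\times m(x) = \bigl(m(x+\veps\xi)-m(x)\bigr)\times m(x),
\end{equation*}
which allows us to bring a difference quotient into $\Hcal_\epsilon(m)$ and link it to $\Fcal_\epsilon(m)$.

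Next I would multiply and divide by $a(x/\veps,x/\veps+\xi)^{1/2}\rho(\xi)^{1/2}$ inside the integrand of $\Hcal_\veps(m)$, writing
\begin{equation*}
    \Hcal_\epsilon(m)
    = \int_{\R^3}\int_{\Om_{\veps\xi}}
    \underbrace{a(\cdot)^{1/2}\rho(\xi)^{1/2}\tfrac{m(x+\veps\xi)-m(x)}{|\veps\xi|}}_{=:\,U_\veps(x,\xi)}
    \cdot
    \underbrace{\tfrac{\kappa(\cdot)\nu(\xi)}{a(\cdot)^{1/2}\rho(\xi)^{1/2}}\times m(x)}_{=:\,V_\veps(x,\xi)}
    \dd x\dd \xi,
\end{equation*}
where $(\cdot)$ abbreviates $(x/\veps,x/\veps+\xi)$. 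Young's inequality $|U\cdot V|\le \tfrac12|U|^2+\tfrac12|V|^2$ applied pointwise (with the same argument applied to $-\Hcal_\veps(m)$ for the lower bound) yields
\begin{equation*}
    -\tfrac12\Fcal_\veps(m) - \tfrac12 R_\veps(m)
    \;\le\; \Hcal_\veps(m) \;\le\; \tfrac12\Fcal_\veps(m) + \tfrac12 R_\veps(m),
\end{equation*}
where $R_\veps(m)\coloneqq \int_{\R^3}\int_{\Om_{\veps\xi}}\tfrac{\kappa(\cdot)^2|\nu(\xi)|^2}{a(\cdot)\rho(\xi)}|m(x)|^2\dd x\dd \xi$ (since $|b\times m|\le |b||m|$).

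The remainder $R_\veps(m)$ is then controlled using the hypotheses: the lower bound $a\ge a_0>0$ from \ref{H1}, the boundedness $\|\kappa\|_{L^\infty}<\infty$ from \ref{H1}, and the crucial interaction bound \ref{H4}, giving
\begin{equation*}
    R_\veps(m)
    \le \tfrac{1}{a_0}\|\kappa\|_{L^\infty(\R^3\times\R^3)}^2
    \norm*{\tfrac{\nu}{\rho^{1/2}}}_{L^2(\R^3)}^{2}
    \|m\|_{L^2(\Om)}^2
    \eqqcolon 2C\|m\|_{L^2(\Om)}^2.
\end{equation*}
Combining this with the previous display delivers the claimed two-sided estimate. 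No step presents a real obstacle here; the only point requiring care is that assumption \ref{H4} is exactly what makes the $\xi$-integral $\int |\nu|^2/\rho\,\dd\xi$ finite, so the whole strategy hinges on having imposed \ref{H4} at the level of the hypotheses.
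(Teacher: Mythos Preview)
Your proposal is correct and follows essentially the same route as the paper: rewrite the cross product via $m(x+\veps\xi)\times m(x)=(m(x+\veps\xi)-m(x))\times m(x)$, multiply and divide by $a^{1/2}\rho^{1/2}$, apply Young's inequality, and bound the remainder via \ref{H1} and \ref{H4}. The only cosmetic slip is a sign in your displayed identity for $\Hcal_\epsilon$ (the scalar triple product gives $(m(x+\veps\xi)-m(x))\cdot(m(x)\times\nu)$, not $\cdot(\nu\times m)$), but since you immediately pass to $|U\cdot V|$ this has no effect on the argument.
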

\begin{proof}
We first observe that
\begin{align*}
    \Hcal_\epsilon(m) 
     &=\int_{\R^3 } \int_{\Omega_{\veps \xi}} 
    \kappa \left(\frac{x}{\epsilon},\frac{x}{\epsilon} + \xi \right)
    m(x+\veps\xi) \cdot\frac{( m(x)\times \nu(\xi))}{|\veps \xi|}\dd x\dd \xi\\
    & =\int_{\R^3 } \int_{\Omega_{\veps \xi}} 
    \kappa \left(\frac{x}{\epsilon},\frac{x}{\epsilon} + \xi \right) a^{1/2} \left(\frac{x}{\epsilon},\frac{x}{\epsilon} + \xi \right)
    \\
    &\qquad\qquad\qquad\qquad\qquad\rho(\xi)^{1/2}\frac{(m(x+\veps\xi)-m(x))}{|\veps \xi|}
    \cdot\left( \frac{m(x)}{a^{1/2} \left(\frac{x}{\epsilon},\frac{x}{\epsilon} + \xi \right)}\times \frac{\nu(\xi)}{\rho(\xi)^{1/2} }\right)\dd x\dd \xi.
\end{align*}
Hence, applying Young's inequality, we estimate (similar to \cite[Theorem~2.2, Step 2]{DDG24})
\begin{align}
\begin{split}
    |\Hcal_\epsilon(m)|
    &\le \frac{1}{2}\Fcal_\epsilon(m)
    +\frac{1}{2}\norm*{\frac{\kappa^2}{a}}_{L^\infty(\R^3\times \R^3)}
    \int_{\R^3}\int_\Om
    \frac{|m(x)\times \nu(\xi)|^2}{\rho(\xi)}  \dd x \dd \xi\\
    &\le \frac{1}{2}\Fcal_\epsilon(m)
    +\frac{1}{2}\frac{1}{a_0}\norm*{\kappa}^2_{L^\infty(\R^3\times \R^3)}\norm*{\frac{\nu}{\rho^{1/2}}}^{2}_{L^{2}(\R^3)}\norm{m}^2_{L^2(\Om)},
\end{split}
\end{align}
where we remind that \(a\ge a_0>0\) by \ref{H1}, and where we used that
\begin{align*}
    \int_{\R^3}\int_\Om\frac{|m(x)\times \nu(\xi)|^2}{\rho(\xi)}  \dd x \dd \xi
    \le \norm*{\frac{\nu}{\rho^{1/2}}}^{2}_{L^{2}(\R^3)}\norm{m}^2_{L^2(\Om)}
\end{align*}
by \ref{H4}. 
The claim follows by 
setting
\begin{equation*}
    C
    \coloneqq  \frac{1}{2}\frac{1}{a_0}\norm*{\kappa}^2_{L^\infty(\R^3\times \R^3)}
    \norm*{\frac{\nu}{\rho^{1/2}}}^{2}_{L^{2}(\R^3)}.\qedhere
\end{equation*}
\end{proof}

We prove next a density result with respect to the topology on the vector space of functions in \( L^2(\Om;L^2_{\#,0}(Q;\R^3))\) given by our nonlocal energies. 
This turned out to be useful for the \(\Gamma\)-convergence argument in Theorem~\ref{th:Gammalimit}. Adapting to our nonlocal setting, the proofs follow by standard techniques similar to the one employed for the fractional Sobolev spaces, see~e.g.,\cite[Theorem~6.6 and Theorem~6.62]{Leo23}. 

Consider a kernel $\rho: \R^3 \to [0,+\infty]$ fulfilling condition \ref{H2}, and
define the mapping \(\norm{\cdot}_\rho:L^2(\Om;L^2_{\#,0}(Q;\R^3))\to \R\), given by
\begin{equation}\label{eq:seminorm}
    \norm{v}_\rho
    \coloneqq \left(\int_{\Om}\int_{\R^3}\int_Q \rho(\xi)\frac{|v(x,z+\xi)-v(x,z)|^2}{|\xi|^2} {\dl z\dl \xi\dl x}\right)^\frac{1}{2}.
\end{equation}
For all \(v\in L^2(\Om;L^2_{\#,0}(Q;\R^3))\) we apply the nonlocal Poincar\'{e}-Wirtinger inequality in \cite[Proposition~4.2]{BeM14} to the function $v(x,\cdot)$ for a.e.~$x \in \Omega$, to find 
    \begin{align}
    \begin{split}\label{eq:nonlocal_poincare}
        \norm{v}^2_{L^2(\Om;L^2_{\#,0}(Q;\R^3))}
    &=\int_\Om\int_Q\abs*{v}^2\dd z\dd x \\
    &\leq C \int_\Om\int_Q\int_{Q\cap B_r(y)}\abs*{v(x,y)-v(x,z)}^2\dd z\dd y\dd x \\
    &\leq C \int_\Om\int_Q\int_{Q\cap B_r(y)}\rho(y-z)\frac{\abs*{v(x,y)-v(x,z)}^2}{\abs{y-z}^2}\dd z\dd y\dd x \\
    &\leq C \int_\Om\int_{\R^3}\int_Q\rho(\xi)\frac{\abs*{v(x,z+\xi)-v(x,z)}^2}{\abs{\xi}^2}\dd z\dd \xi\dd x\\
    &= C\norm{v}_\rho^2,
    \end{split}
    \end{align}
for some $C > 0$, where we have used \eqref{eq:rhocoercive} from \ref{H2} in the second inequality. We then have the following result.
\begin{proposition}\label{prop:hilbert_space}
 The map \(\norm{\cdot}_\rho\) in \eqref{eq:seminorm} defines a norm on the space 
\begin{equation*}
    H^\rho_{\#,0}\coloneqq \Big\{v\in L^2(\Om;L^2_{\#,0}(Q;\R^3)):\, 
    \norm{v}_\rho<+ \infty\Big\}.
\end{equation*}
Moreover, \((H^\rho_{\#,0},\norm{\cdot}_\rho) \) is a Hilbert space.
\end{proposition}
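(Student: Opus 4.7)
The plan is to verify the norm axioms by means of an isometric embedding of $H^\rho_{\#,0}$ into a standard Lebesgue space, and then to deduce completeness by combining the nonlocal Poincar\'{e}--Wirtinger bound already recalled in \eqref{eq:nonlocal_poincare} with a routine closedness argument. This strategy mirrors the classical proofs for fractional Sobolev spaces, in line with the references \cite[Theorem~6.6 and Theorem~6.62]{Leo23} cited by the authors.

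Concretely, I would introduce the linear operator
\[
T_\rho v(x,\xi,z) \coloneqq \rho(\xi)^{1/2}\frac{v(x,z+\xi)-v(x,z)}{|\xi|}, \qquad T_\rho \colon H^\rho_{\#,0} \to L^2(\Om \times \R^3 \times Q; \R^3),
\]
so that by construction $\|v\|_\rho = \|T_\rho v\|_{L^2(\Om\times\R^3\times Q)}$. Non-negativity and absolute homogeneity are then immediate; the triangle inequality follows from Minkowski's inequality applied to $T_\rho u$ and $T_\rho v$ in the target Lebesgue space; and definiteness is a direct consequence of \eqref{eq:nonlocal_poincare}, which yields $\|v\|_{L^2(\Om;L^2_{\#,0}(Q;\R^3))} \le C\|v\|_\rho$, so $\|v\|_\rho = 0$ forces $v = 0$. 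Since the parallelogram identity is inherited from $L^2(\Om\times\R^3\times Q;\R^3)$, the norm $\|\cdot\|_\rho$ is induced by the bilinear form
\[
\langle u,v\rangle_\rho \coloneqq \int_\Om\int_{\R^3}\int_Q \rho(\xi)\frac{(u(x,z+\xi)-u(x,z))\cdot(v(x,z+\xi)-v(x,z))}{|\xi|^2}\dl z \dl \xi \dl x,
\]
which is well defined on $H^\rho_{\#,0}\times H^\rho_{\#,0}$ by Cauchy--Schwarz applied to $T_\rho u$ and $T_\rho v$.

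For completeness, I would take a Cauchy sequence $(v_k)_k \subset H^\rho_{\#,0}$ with respect to $\|\cdot\|_\rho$. Applying \eqref{eq:nonlocal_poincare} to the differences $v_j-v_k$ shows that $(v_k)_k$ is also Cauchy in the Hilbert space $L^2(\Om;L^2_{\#,0}(Q;\R^3))$ and therefore converges in this space to some limit $v$; the vanishing-mean condition is preserved in the $L^2$-limit by continuity of the integral. At the same time, $(T_\rho v_k)_k$ is Cauchy in $L^2(\Om\times\R^3\times Q;\R^3)$ and converges there to some $F$. The only non-routine step, and the one I expect to require the most care, is the identification $F = T_\rho v$. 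To carry it out, I would extract a non-relabeled subsequence along which $v_k\to v$ pointwise almost everywhere on $\Om\times Q$; using periodic extension together with Fubini and translation invariance, this upgrades to a.e.\ convergence of the increments $v_k(x,z+\xi)-v_k(x,z)$ to $v(x,z+\xi)-v(x,z)$ on $\Om\times\R^3\times Q$, hence to a.e.\ convergence $T_\rho v_k \to T_\rho v$. Combining this with the $L^2$-convergence $T_\rho v_k \to F$ and uniqueness of a.e.\ limits forces $F = T_\rho v$. In particular $v\in H^\rho_{\#,0}$ and $\|v-v_k\|_\rho = \|T_\rho(v-v_k)\|_{L^2}\to 0$, establishing completeness.
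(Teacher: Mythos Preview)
Your proposal is correct and follows essentially the same outline as the paper: both use \eqref{eq:nonlocal_poincare} for definiteness and to upgrade a $\|\cdot\|_\rho$-Cauchy sequence to an $L^2$-Cauchy sequence, then argue completeness. The only substantive difference is in the final step. You pass to the target space $L^2(\Om\times\R^3\times Q;\R^3)$, obtain a limit $F$ for $(T_\rho v_k)_k$, and identify $F=T_\rho v$ via pointwise a.e.\ convergence of the increments; this requires the Fubini/translation argument you sketch (which is fine, though you should note that one passes to a \emph{further} subsequence to get $T_\rho v_k\to F$ a.e.\ before comparing). The paper bypasses this identification entirely by applying Fatou's lemma directly to the integrand of $\|v-v_k\|_\rho^2$, obtaining $\|v-v_k\|_\rho\le\liminf_l\|v_l-v_k\|_\rho$ and concluding from the Cauchy property. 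The Fatou route is shorter and avoids the measure-theoretic bookkeeping on null sets; your route has the conceptual advantage of exhibiting $H^\rho_{\#,0}$ as (isometric to) a closed subspace of a concrete $L^2$ space, which makes the Hilbert-space structure transparent.
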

\begin{proof}
    It follows immediately from the definition in \eqref{eq:seminorm} that \(\norm{\cdot}_\rho\) is subadditive and positively homogeneous. The positive definiteness of \(\norm{\cdot}_\rho\) in \(H^\rho_{\#,0}\) is a direct consequence of \eqref{eq:nonlocal_poincare} and the positive definiteness of the \(L^2\)-norm.

    We turn now to the proof that \((H^\rho_{\#,0},\norm{\cdot}_\rho) \) is a Hilbert space. Due to the \(L^2\)-structure of \(\norm{\cdot}_\rho\) and the non-negativity of $\rho$, it is evident that the metric on \((H^\rho_{\#,0},\norm{\cdot}_\rho)\) is induced by a well-defined inner product.
    To show the completeness of \(H^\rho_{\#,0}\), let \(\seq{v}\subset H^\rho_{\#,0}\) be a Cauchy sequence. By \eqref{eq:nonlocal_poincare}, we find that $\seq{v}$ is also a Cauchy sequence in $L^2(\Om;L^2_{\#,0}(Q;\R^3))$, so that, up to a non-relabeled subsequence, there is a $v \in L^2(\Om;L^2_{\#,0}(Q;\R^3))$ such that $v_k \to v$ both in $L^2(\Om;L^2_{\#,0}(Q;\R^3))$ and pointwise a.e.~in $\Omega \times Q$. We apply Fatou's lemma to find that
    \begin{equation*}
        \lim_{k\to \infty}\norm{v-v_k}_\rho
        \le  \lim_{k\to \infty}\liminf_{l\to\infty} \norm{v_l-v_k}_\rho
        =0,
    \end{equation*}
    where the last equality follows from the fact that $\seq{v}$ is a Cauchy sequence.
    \end{proof}

We finally have the density result.

\begin{proposition}\label{prop:density}
    The inclusion \(C_c^\infty(\Om;C_{\#,0}^\infty( Q;\R^3))\subset H^\rho_{\#,0}\) is dense with respect to \(\norm{\cdot}_\rho\). 
\end{proposition}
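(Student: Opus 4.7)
The plan is to approximate an arbitrary \(v\in H^\rho_{\#,0}\) by a diagonal sequence obtained from three successive regularizations: first truncation of \(v\) in the \(x\)-variable by a smooth cutoff supported in \(\Om\), then mollification in \(x\) by a standard mollifier \(\varphi_\eta\), and finally mollification in \(z\) by a standard mollifier \(\psi_\delta\) applied to the periodic extension of \(v(x,\cdot)\). The central structural observation, which makes the estimates routine, is that the norm \(\norm{\cdot}_\rho\) only measures differences in the \(z\)-variable; hence, any operation acting purely in \(x\) commutes with these differences and leaves the periodicity and mean-zero constraints in \(z\) intact.

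For the truncation, I pick \(\chi_k\in C_c^\infty(\Om;[0,1])\) with \(\chi_k\to 1\) pointwise a.e.\ in \(\Om\) and set \(v_k(x,z)\coloneqq \chi_k(x)v(x,z)\). Then \(\inner{v_k}_Q(x)=\chi_k(x)\inner{v}_Q(x)=0\), and since \(\chi_k\) does not depend on \(z\),
\begin{equation*}
    \norm{v-v_k}_\rho^2
    = \int_\Om (1-\chi_k(x))^2 \int_{\R^3}\int_Q \rho(\xi)\frac{\abs{v(x,z+\xi)-v(x,z)}^2}{\abs{\xi}^2}\dl z\dl \xi\dl x,
\end{equation*}
which vanishes as \(k\to\infty\) by dominated convergence, as \(v\in H^\rho_{\#,0}\) provides an integrable majorant. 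For the \(x\)-mollification, the identity
\begin{equation*}
(\varphi_\eta *_x v_k)(x,z+\xi)-(\varphi_\eta *_x v_k)(x,z)=\varphi_\eta*_x[v_k(\cdot,z+\xi)-v_k(\cdot,z)](x)
\end{equation*}
together with Young's convolution inequality yields \(\norm{\varphi_\eta *_x v_k}_\rho\le \norm{v_k}_\rho\), while the convergence \(\varphi_\eta *_x v_k\to v_k\) in \(\norm{\cdot}_\rho\) as \(\eta\to 0\) follows from \(L^2\)-continuity of translation in \(x\) and dominated convergence. For \(\eta\) sufficiently small the result is smooth in \(x\), compactly supported in \(\Om\), and still periodic with mean zero in \(z\).

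For the \(z\)-mollification, \(\psi_\delta *_z v\) is smooth and periodic in \(z\), and its mean-zero property is preserved by Fubini. Setting \(g_{x,\xi}(z)\coloneqq (v(x,z+\xi)-v(x,z))/\abs{\xi}\), the integrand in \(\norm{\psi_\delta *_z v-v}_\rho^2\) at fixed \((x,\xi)\) reduces to \(\rho(\xi)\norm{\psi_\delta * g_{x,\xi}-g_{x,\xi}}_{L^2(Q)}^2\), which tends to \(0\) as \(\delta\to 0\) by \(L^2\)-continuity of translation and is dominated by the integrable majorant \(4\rho(\xi)\norm{g_{x,\xi}}_{L^2(Q)}^2\); dominated convergence then yields \(\norm{\psi_\delta *_z v-v}_\rho\to 0\). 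A diagonal argument combining the three steps delivers the desired approximating sequence in \(C_c^\infty(\Om;C^\infty_{\#,0}(Q;\R^3))\). The only genuine point of care, rather than a real obstacle, is the ordering of the regularizations so that the final function simultaneously has compact support in \(\Om\), is smooth in both variables, and retains mean zero in \(z\): performing the \(z\)-mollification before the \(x\)-mollification, and choosing \(\eta\) smaller than the distance from the truncated support to \(\p\Om\), ensures all three properties hold at once.
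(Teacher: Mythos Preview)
Your proof is correct and takes a genuinely different route from the paper's argument. The paper mollifies simultaneously in \((x,z)\in\R^6\) with a single mollifier and cutoff, establishes only the inequality \(\norm{v_k}_\rho\le\norm{v}_\rho\) via Jensen's inequality, and then concludes strong convergence indirectly: the bound gives weak compactness in the Hilbert space \(H^\rho_{\#,0}\), the weak limit is identified as \(v\) via the stronger \(L^2\)-convergence, and finally weak convergence plus convergence of norms is upgraded to strong convergence by the Radon--Riesz property.

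Your approach is more elementary and more transparent. By separating the three regularizations and exploiting the structural observation that \(\norm{\cdot}_\rho\) only measures differences in the \(z\)-variable, you prove each step converges \emph{directly} in \(\norm{\cdot}_\rho\), without invoking any Hilbert-space machinery. This buys you a self-contained argument that does not rely on Proposition~\ref{prop:hilbert_space} (completeness of \(H^\rho_{\#,0}\)) or on uniform convexity. The paper's route, on the other hand, avoids having to check convergence of each regularization step individually and handles the joint mollification in one stroke, at the price of the abstract detour through weak convergence. Both arguments are standard in spirit; yours is closer to the way one proves density in fractional Sobolev spaces by direct estimates, while the paper's is closer to a variational/compactness viewpoint.
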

\begin{proof}
    It is clear that \(C_c^\infty(\Om;C_{\#,0}^\infty( Q;\R^3))\subset H^\rho_{\#,0}\). Indeed, let $v \in C_c^\infty(\Om;C_{\#,0}^\infty( Q;\R^3))$, then we find that
     \begin{align*}
         \norm{v}^2_\rho &= \int_{\Om}\int_{\R^3}\int_{Q} \rho(\xi)\frac{|v(x,z+\xi)-v(x,z)|^2}{|\xi|^2} {\dl z\dl \xi\dl x} \\ & \leq \int_{\Om}\int_{\R^3}\int_{Q} \rho(\xi) \norm{\nabla v}^2_{L^{\infty}(\Omega\times\R^3)} \dl z \, \dl \xi\dl x \\ & \leq \abs{\Omega}\norm{\rho}_{L^1(\R^3)} \norm{\nabla v}^2_{L^\infty(\Om \times \R^3)} < + \infty.
     \end{align*}

    To prove the density, we proceed by mollification, with a similar construction as in Lemma~\ref{le:densitykernel}.
    Let \(\varphi\in C_c^\infty(\R^3\times \R^3)\), with $\varphi \geq 0$, \(\norm{\varphi}_{L^1(\R^3\times \R^3)}=1\) and \(\supp \varphi\subset B_1(0)\). We mollify simultaneously in the \(x\) and \(z\) arguments, and so we use the notation \(y=(y_1,y_2)\in \R^3\times\R^3\) for the mollifying variable, that is, for $k \in \N$ we define \(\varphi_k(y)\coloneqq k^{6}\varphi(ky)\). Moreover, we choose an associated sequence of cut-off functions \(\seq{\chi}\subset C_c^\infty(\Om;[0,1])\) such that \(\chi_k= 1\) on \(\Om_{3/k}\) and \(\chi_k=0\) on \(\Om\setminus \Om_{2/k}\) for all $k \in \N$. Then, for every \(v\in H^\rho_{\#,0}\) we define the mollified sequence 
    \begin{equation*}
        v_k(x,z)
        \coloneqq \chi_k(x)(\varphi_k*\bar{v})(x,z)
        = \chi_k(x)\int_{\R^6}\varphi_{k}(y)\bar{v}(x-y_1,z-y_2)\dl y \quad \text{for $k \in \N$},
    \end{equation*}
    where \(\bar{v}\) denotes the extension in \(x\) of \(v\) to the outside of \(\Om\) by zero. It follows that \(\seq{v}\subset C_c^\infty(\Om;C_{\#,0}^\infty( Q;\R^3))\). Since \(v\in L^2(\Om;L^2_\#(Q;\R^3))\), we have that \(v_k\to v\) in \(L^2(\Om;L^2_\#(Q;\R^3))\) and, possibly up to a non-relabeled subsequence, that \(v_k\to v\) pointwise a.e.~in \(\Om\times Q\) as $k \to \infty.$ 

    Additionally, we infer that \(\norm{v_k}_\rho\le \norm{v}_\rho\) for every \(k\in\N\). Indeed, by Jensen's inequality and Fubini's theorem, for a.e.~\(\xi \in \R^3\) we have that
    \begin{align}
        \begin{split}\label{eq:moll_lowers_en}
            &\int_{\Om}\int_Q |v_k(x,z+\xi)-v_k(x,z)|^2{\dl z\dl x}\\
            &\quad\le \int_{\R^6}\varphi_{k}(y)\int_{\Om_{2\veps_k}}\int_Q 
            |v(x-y_1,z-y_2+\xi)-v(x-y_1,z-y_2)|^2 {\dl z\dl x\dl y}\\
            &\quad = \int_{\R^6}\varphi_{k}(y)\int_{\Om_{2\veps_k}-y_1}\int_Q 
            |v(x,z+\xi)-v(x,z)|^2 {\dl z\dl x\dl y}\\
            &\quad \le \int_{\Om}\int_Q |v(x,z+\xi)-v(x,z)|^2{\dl z\dl x},
        \end{split}
    \end{align} where in the first inequality we treated \(\varphi_{k}\dl y\) as a probability measure on \(\R^6\), while in the equality we used that \(v_k\) is periodic in the second variable, and for the last inequality that \(\Om_{2/k}-y_1\subset \Om\) for every \(y\in \supp \varphi_{k}\subset B_{1/k}(0)\). Therefore, integrating also in $\xi \in \R^3$, we get
    \begin{align}\label{eq:bound_rho}
    \begin{split}
       \norm{v_k}^{2}_\rho & = \int_{\Om} \int_{\R^3}\int_Q \rho(\xi) \frac{|v_k(x,z+\xi)-v_k(x,z)|^2}{|\xi|^2} \dl z \dl \xi \dl x \\ &\leq \int_{\Om} \int_{\R^3}\int_Q \rho(\xi) \frac{|v(x,z+\xi)-v(x,z)|^2}{|\xi|^2} \dl z \dl \xi \dl x = \norm{v}^{2}_\rho .
       \end{split}
    \end{align}
    Using the bound \eqref{eq:bound_rho}, we infer that, up to a non-relabeled subsequence, $(v_k)_k$ converges weakly in $H^\rho_{\#,0}$ to some $\tilde{v} \in H^\rho_{\#,0}$, since $H^\rho_{\#,0}$ is a Hilbert space by Proposition \ref{prop:hilbert_space} (see, e.g., \cite[Theorem 3.18]{Bre11}).  Moreover, by \eqref{eq:nonlocal_poincare}  the topology on $H^\rho_{\#,0}$ is stronger than the one on $L^2(\Om;L^2_{\#,0}(Q;\R^3))$, and so $(v_k)_k$ also weakly converges to some $\tilde{v}$ in $L^2(\Om;L^2_{\#,0}(Q;\R^3))$. Since by construction of the mollified sequence \(v_k\to v\) in \(L^2(\Om;L^2_\#(Q;\R^3))\) as $k \to \infty$, 
    this implies that $\tilde{v}=v$ a.e.\ in $\Om$.
    Thus, the weak lower semicontinuity of the norm yields
    \[
    \limsup_{k \to \infty} \norm{v_k}_\rho \leq \norm{v}_\rho \leq \liminf_{k \to \infty} \norm{v_k}_\rho.
    \]
    This shows that \(\norm{v_k}_\rho\to \norm{v}_\rho\) as $k \to \infty$. Combining this with the weak convergence in $H^\rho_{\#,0}$, and using the Radon-Riesz property of uniformly convex Banach spaces (cf. \cite[Proposition~3.32]{Bre11}), we find that $v_k \to v$ in $H^\rho_{\#,0}$ as $k \to \infty$. This concludes the proof.
\end{proof}

\bibliographystyle{abbrv}
\bibliography{bibliography}
\end{document}